\theoremstyle{plain}\newtheorem{Theorem}{Theorem}[section]
\theoremstyle{plain}\newtheorem{Corollary}[Theorem]{Corollary}
\theoremstyle{plain}\newtheorem{Lemma}[Theorem]{Lemma}
\theoremstyle{plain}\newtheorem{Definition}[Theorem]{Definition}
\theoremstyle{plain}\newtheorem{Proposition}[Theorem]{Proposition}
\theoremstyle{plain}
\theoremstyle{plain}
\theoremstyle{plain}\newtheorem*{Theorem*}{Theorem}
\newtheorem*{rep@theorem}{\rep@title}
\newcommand{\newreptheorem}[2]{%
\newenvironment{rep#1}[1]{%
 \def\rep@title{#2 \ref{##1}}%
 \begin{rep@theorem}}%
 {\end{rep@theorem}}}
\theoremstyle{plain}\newreptheorem{theorem}{Theorem}
\theoremstyle{remark}\newtheorem{remark}[Theorem]{Remark}
\theoremstyle{remark}\newtheorem{example}[Theorem]{Example}
\DeclarePairedDelimiter\paren{\lparen}{\rparen}
\numberwithin{equation}{section}
\DeclareMathOperator{\ima}{Im}
\DeclareMathOperator{\Sp}{Sp}
\DeclareMathOperator{\SU}{SU}
\DeclareMathOperator{\SL}{SL}
\DeclareMathOperator{\Stab}{Stab}
\DeclareMathOperator{\Orb}{Orb}
\DeclareMathOperator{\Bif}{Bif}
\DeclareMathOperator{\Ad}{Ad}
\DeclareMathOperator{\Hom}{Hom}
\DeclareMathOperator{\Hess}{Hess}
\DeclareMathOperator{\Tr}{Tr}
\DeclareMathOperator{\id}{id}
\DeclareMathOperator{\ind}{ind}
\DeclareMathOperator{\CS}{CS} 
\DeclareMathOperator{\hol}{Hol} 
\DeclareMathOperator{\sym}{Sym}
\DeclareMathOperator{\U}{U}
\DeclareMathOperator{\grad}{grad}
\newcommand{\bC}{\mathbb{C}}
\newcommand{\bH}{\mathbb{H}}
\newcommand{\bK}{\mathbb{K}}
\newcommand{\bR}{\mathbb{R}}
\newcommand{\bZ}{\mathbb{Z}}
\newcommand{\clA}{\mathcal{A}}
\newcommand{\clC}{\mathcal{C}}
\newcommand{\clH}{\mathcal{H}}
\newcommand{\clG}{\mathcal{G}}
\newcommand{\clL}{\mathcal{L}}
\newcommand{\clM}{\mathcal{M}}
\newcommand{\clP}{\mathcal{P}}
\newcommand{\clR}{\mathcal{R}}
\newcommand{\frg}{\mathfrak{g}}
\newenvironment{claim}[1]{\par\noindent{\emph{Claim:}}\space#1}{}
\begin{document}

\author{Shaoyun Bai}
\address{Department of Mathematics, Princeton University, New Jersey 08544, USA}
\email{shaoyunb@math.princeton.edu}
\title{A symplectic formula of generalized Casson invariants}
\begin{abstract}
Suppose $Y$ is an integer homology 3-sphere, 
Taubes \cite{taubes1990casson} proved that the number of irreducible critical orbits of the perturbed Chern-Simons functional on $Y$, counted with signs, is equal to the algebraic intersection number of two character varieties associated with Heegaard splittings when the structure group is $\SU(2)$. Taubes' result established a relationship between gauge theory and the Casson invariant. This article proves an analogous identification result for $\SU(n)$ generalized Casson invariants. As a special case, we show that the $\SU(3)$ Casson invariant of Boden-Herald \cite{boden1998the} can be equivalently calculated by taking an appropriate intersection number of Lagrangian submanifolds.
\end{abstract}

\maketitle

\setcounter{tocdepth}{1}

\section{Introduction}

In 1985, Casson introduced an invariant for integer homology 3-spheres (see \cite{akbulut1990casson, marin1988nouvel}) by considering the $\SU(2)$--representations of the fundamental group. Suppose $Y$ is an integer homology 3-sphere with a Heegaard splitting $Y=H_1\cup_\Sigma H_2$, then the inclusions  of $\Sigma$ in $H_1$ and $H_2$ induce embeddings of the $\SU(2)$--character varieties of $\pi_1(H_1)$ and $\pi_1(H_2)$ into the $\SU(2)$--character variety of $\pi_1(\Sigma)$. The character variety of $Y$ is given by the intersection of the character varieties of $M_1$ and $M_2$ as a set. After taking a sufficiently small generic perturbation on the embeddings, one obtains a well-defined intersection number of the character varieties of $\pi_1(H_1)$ and $\pi_1(H_2)$. The intersection number is always even (see, for example, \cite[Corollary 17.6]{saveliev2011lectures}), and the Casson invariant is defined to be half of this intersection number.

On the other hand, let $P$ be the trivial $\SU(2)$ bundle over $Y$ with a fixed trivialization, and let 
$\clC(Y)$ be the affine space of smooth connections on $P$. Let $\theta$ be the flat connection associated with the given trivialization of $P$. For $B\in\clC(Y)$, suppose $B= \theta + b$ where $b\in \Omega^1(Y,\mathfrak{su}(2))$, then the \emph{Chern-Simons functional}
$$
\CS : \clC(Y) \to \bR
$$
is defined by
$$
\CS(B) = \frac{1}{2} \int_Y \Tr \paren*{ db\wedge b+\frac23 b\wedge b\wedge b }.
$$
After fixing a Riemannian metric on $Y$, the formal gradient of $\CS$ at $B\in \clC(Y)$ is equal to $*F_B$, where $*$ is the Hodge star operator of the given metric; therefore, the critical points of $\CS$ are given by flat connections, and the set of critical orbits of $\CS$ has a one-to-one correspondence with the conjugacy classes of $\SU(2)$--representations of $\pi_1(Y)$. Taubes \cite{taubes1990casson} proved that after adding a generic perturbation to the Chern-Simons functional, the signed count of its critical orbits is equal to two times the Casson invariant. 

A natural question about the Casson invariant is whether it can be extended to more general groups and more general 3-manifolds. This question has been studied intensively since the introduction of the theory, and many constructions are given using either intersection of character varieties arisen from Heegaard splittings or the Chern-Simons functionals. The Heegaard-splitting approach has been studied by Boyer-Nicas \cite{boyer1990varieties}, Walker \cite{walker1992extension}, Cappell-Lee-Miller \cite{cappell1990symplectic}, and Curtis \cite{curtis1994generalized}.
More recently, Abouzaid-Manolescu \cite{abouzaid2017sheaf} studied the intersection of $\SL_2(\bC)$ character varieties arisen from Heegaard splittings and obtained a sheaf-theoretic 3-manifold invariant. On the gauge theory side, Boden-Herald \cite{boden1998the} defined an $\SU(3)$ Casson invariant for integer homology spheres by introducing real-valued correction terms on the reducible critical orbits. Variations of the $\SU(3)$ Casson invariant have also been defined by Boden-Herald-Kirk \cite{BHK2001} and Cappell-Lee-Miller \cite{cappell2002perturbative}. Together with Zhang, the author \cite{bai2020equivariant} generalized the construction of Boden-Herald and defined perturbative $\SU(n)$--Casson invariants for integer homology spheres. The construction of \cite{bai2020equivariant} is reviewed in Section \ref{sec_preliminary}.

This article studies the relationship of the gauge-theoretic generalization of Casson invariants in \cite{bai2020equivariant} with Heegaard splittings. We obtain an identification result analogous to Taubes' theorem \cite{taubes1990casson}: the quantities entering into the definition of the $\SU(n)$--Casson invariant could all be expressed in terms of quantities constructed using finite-dimensional symplectic geometry. As a special case, we show that the $\SU(3)$ Casson invariant of Boden-Herald \cite{boden1998the} is equal to an appropriate equivariant intersection number of character varieties associated with Heegaard splittings.

Now we describe the contents of this paper in more detail. Let $Y=H_1\cup_\Sigma H_2$ be a closed, oriented 3-manifold with a Heegaard splitting, and let $G$ be a compact Lie group with Lie algebra $\frg$.
In general, the $G$--character variety of $\pi_1(\Sigma)$  have singularities. Their appearance put forth difficulties on constructing perturbations and studying intersection theory from the differential-geometric approach. To get around this difficulty, we make use of the \emph{extended moduli space of flat connections} $\clM^\frg(\Sigma')$ on $\Sigma'$ introduced by Jeffrey \cite{jeffrey1994extended} and Huebschmann \cite{huebschmann1995symplectic} to study an equivalent question using equivariant geometry, where $\Sigma'$ be the Riemann surface obtained from $\Sigma$ be removing a disc. Roughly speaking, there is an open subset $\hat{\clM}^{\mathfrak{g}}(\Sigma') \subset \clM^\frg(\Sigma')$ which is a smooth symplectic manifold and there is a Hamiltonian $G$--action on $\hat{\clM}^{\mathfrak{g}}(\Sigma')$ whose moment map reduction $\mu^{-1}(0) / G$ is naturally identified with the $G$--character variety of $\pi_1(\Sigma)$. The $G$--character varieties of $\pi_1(H_1)$ and $\pi_1(H_2)$ could be lifted to two smooth Lagrangian submanifolds $L_1$, $L_2$ of $\hat\clM^\frg(\Sigma')$ which are invariant under the $G$--action. They are also contained in $\mu^{-1}(0)$. Details of the definition and properties of the extended moduli space will be reviewed in Section \ref{sec_Extended}. Note that the extended moduli spaces were used in the work of Manolescu-Woodward \cite{manolescu2012floer} to define a symplectic version of instanton Floer homology.

We are then interested in studying the $G$--equivariant intersection theory of $L_1$ and $L_2$. In Section \ref{sec_Lag_int}, we introduce a transversality result on Hamiltonian perturbations that allows us to perturb $L_1$ and $L_2$ equivariantly so that their intersection is non-degenerate (see Definition \ref{def_Lag_non-deg}). Actually, in Section \ref{subsec_pert}, we introduce the notion of \emph{a compatible pair of} a holonomy perturbation $f_{\bf q}$ on the $3$--manifold $Y$ and a $G$--equivariant Hamiltonian perturbation $\Phi_{H}$ on $\hat{\clM}^{\mathfrak{g}}(\Sigma')$ such that there is a one-to-one correspondence between gauge orbits of $f_{\bf q}$--perturbed flat connections on $Y$ and $G$--orbits of $\Phi_{H}(L_1) \cap L_2$. Moreover, the non-degeneracy of connection is equivalent to the non-degeneracy of Lagrangian intersection for such a compatible pair. Therefore, we can use the equivariant transversality result on the symplectic side to construct preferred holonomy perturbation on $Y$ to simplify the calculation of generalized Casson invariants.

Let $p,q$ be two intersection points of $L_1$ and $L_2$ after perturbation, and choose a disk $D$ such that $\partial D= \gamma_1\cup \gamma_2$, where $\gamma_1$ and $\gamma_2$ are two arcs connecting $p$ and $q$ on $L_1$ and $L_2$ respectively. Suppose $H \subset G$ is a closed subgroup and for every $z \in D$ we assume that the stabilizer of $z$ contains $H$ as a subgroup. For such data, we define an \emph{equivariant Maslov index} $\mu^{H}(D)$ in Section \ref{subsec_equiv_mas_ind} which takes value in the representation ring of $H$. On the other hand, the disk $D$ defines a 1-parameter family $B_t(D)$ of (not necessarily) flat $G$ connections on $Y$ connecting the perturbed flat connections on $Y$ induced by $p$ and $q$. In \cite{bai2020equivariant}, the authors defined the \emph{equivariant spectral flow} for a family of self-adjoint operators. The main identification result of this paper is the following theorem, where the more precise statement is given in Theorem \ref{thm_main}.
 
 \begin{Theorem}
 \label{thm_index_intro}
 	The equivariant Maslov index $\mu^{H}(D)$ is equal to the equivariant spectral flow associated with the 1-parameter family of connections $B_t(D)$ on $Y$.
 \end{Theorem}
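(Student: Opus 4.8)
\emph{Strategy.} The plan is to reduce the asserted equality in $R(H)$ to its classical, non-equivariant analogue, applied summand by summand to the isotypic decomposition for $H$. Since the disk $D$ is pointwise fixed by $H$, the restriction of $T\hat{\clM}^\frg(\Sigma')$ to $D$ is an $H$-equivariant symplectic vector bundle, and by construction the equivariant Maslov index $\mu^H(D) \in R(H)$ has, as its coefficient on an irreducible representation $\rho$, the ordinary Maslov index of the path of Lagrangian pairs obtained from $(\gamma_1,\gamma_2)$ after restricting to the $\rho$-isotypic component. Dually, every connection $B_t(D)$ on $Y$ has stabilizer containing $H$, so the (perturbed) extended Hessian operator $K_{B_t}$ is $H$-equivariant and self-adjoint, and the equivariant spectral flow of $\{K_{B_t}\}$ in the sense of \cite{bai2020equivariant} has $\rho$-coefficient the ordinary spectral flow of the family obtained from $\{K_{B_t}\}$ by restricting to the $\rho$-isotypic component. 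Hence it suffices, for each fixed $\rho$, to match the $\rho$-isotypic Lagrangian data on the symplectic side with the $\rho$-isotypic Cauchy data of the operator family on the gauge side, and then to invoke on that summand the non-equivariant theorem equating a Maslov index with a spectral flow.

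\emph{The equivariant linear identification.} The heart of the argument is an $H$-equivariant isomorphism, covering $D$, between the path of Lagrangian pairs defining $\mu^H(D)$ and the path of Cauchy-data Lagrangian pairs of $\{K_{B_t(D)}\}$ for the splitting $Y = H_1 \cup_\Sigma H_2$. First I would recall, following Jeffrey \cite{jeffrey1994extended} and Huebschmann \cite{huebschmann1995symplectic}, that at a point of $\hat{\clM}^\frg(\Sigma')$ represented by a (perturbed-flat) connection $A$, the tangent space is canonically the degree-one piece of the deformation complex of $A$ on $\Sigma'$ augmented by the extra $\frg$-factor, the symplectic form is the wedge-trace pairing, and $TL_1$, $TL_2$ are the images of the restriction maps from the two handlebodies; all of this is natural, hence $G$-equivariant, and therefore $H$-equivariant on the fixed locus of $H$. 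On the gauge side, the limiting boundary values on $\Sigma$ of solutions over $H_i$ of the linearized (perturbed) flatness equation of $B_t$ form precisely the Cauchy-data Lagrangian of $K_{B_t}$. Matching the two descriptions requires (i) passing from the punctured surface $\Sigma'$ with its boundary marking to honest boundary data on the closed surface $\Sigma$, the extra $\frg$-summand of the extended moduli space accounting exactly for the discrepancy of rank $\dim\frg$ created by removing the disc; and (ii) invoking the compatible pair of Section \ref{subsec_pert}, so that the Hamiltonian perturbation $\Phi_H$ and the holonomy perturbation $f_{\bf q}$ deform both sides of the identification compatibly, not merely at the endpoints $p,q$ but along the entire family.

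\emph{Conclusion and main obstacle.} Restricting the identification of the previous paragraph to the $\rho$-isotypic component turns the ordinary Maslov index there into the Maslov index of a path of Cauchy-data Lagrangian pairs for a family of self-adjoint elliptic boundary value problems on $Y$ that is of standard Dirac type near $\Sigma$; the classical theorem (in the form used for generalized Casson invariants; see, e.g., \cite{cappell1990symplectic}, and the parallel discussion in \cite{manolescu2012floer}) then equates this Maslov index with the spectral flow of the operator family restricted to that summand. Summing over the irreducible representations of $H$ with their characters as weights gives $\mu^H(D)$ equal to the equivariant spectral flow of $\{B_t(D)\}$ in $R(H)$, which is the precise form stated in Theorem \ref{thm_main}. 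I expect the main obstacle to be the honest $H$-equivariant comparison in the second step: reconciling the extended-moduli-space model over $\Sigma'$ (with its $\frg$-factor and the $G$-action whose moment-map reduction is the genuine character variety) with the Cauchy-data description over the closed surface $\Sigma$, simultaneously in the perturbed and unperturbed pictures, and verifying that the $H$-action by the vertex group on $\hat{\clM}^\frg(\Sigma')$ is intertwined with the $H$-action by constant gauge transformations on the deformation complex, so that the two isotypic decompositions genuinely correspond; propagating the compatibility of the pair $(f_{\bf q},\Phi_H)$ through this comparison at the linearized level along the whole family is the other delicate point.
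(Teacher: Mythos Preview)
Your high-level architecture is the same as the paper's: reduce to isotypic components (this is exactly Lemma \ref{lem_nonequiv_flow}), and then on each summand invoke a non-equivariant ``spectral flow $=$ Maslov index'' statement. But there is a genuine gap in the middle step, and it is precisely the step you flag as the ``main obstacle'' without resolving.

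The Cauchy-data Lagrangians $\Lambda_{\pm 1}(t)$ attached to the operators $K_{B_t}$ on the two handlebodies are \emph{infinite-dimensional} subspaces of $L^2(\Omega^*(\Sigma)\otimes\frg)$, whereas the Lagrangians $TL_1,TL_2$ in $T\hat\clM^\frg(\Sigma')$ are finite-dimensional. There is no identification between the two; rather, $TL_i$ corresponds to $\Lambda_i(t)\cap \clH_{A_t}$, the intersection of the Cauchy data with the \emph{finite-dimensional} space of twisted harmonic forms (see Corollary \ref{cor_slice} and the definition of $\clL_{\pm 1}(t)$). The classical theorem you cite (Nicolaescu, Cappell--Lee--Miller) only gives you that the spectral flow on $Y$ equals the Maslov index of the \emph{infinite-dimensional} Cauchy-data pair --- this is Theorem \ref{thm_compare} in the paper. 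It does not give you the finite-dimensional Maslov index $\mu^H(D)$ in the extended moduli space.

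The missing ingredient is an adiabatic-limit argument in the style of Dostoglou--Salamon \cite{dostoglou1994cauchy}: one introduces a family of rescaled operators $D_\epsilon(t)$ on the neck $[-1,1]\times\Sigma$ and shows that as $\epsilon\to 0$ the spectral flow with infinite-dimensional Lagrangian boundary conditions converges to the spectral flow of an operator $D_0(t)$ acting only on sections valued in the harmonic forms $\clH_{A_{s,t}}$, with the finite-dimensional boundary Lagrangians $\clL_{\pm 1}(t)$. This is Theorem \ref{thm_spec_comp_finite} and is where the real analytic work lies (elliptic estimates uniform in $\epsilon$, resolvent convergence, multiplicity control). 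Your proposal's phrase ``passing from $\Sigma'$ to honest boundary data on $\Sigma$, the extra $\frg$-summand accounting for the discrepancy'' correctly identifies the bookkeeping of Corollary \ref{cor_slice}, but that bookkeeping alone does not collapse an infinite-dimensional Maslov index to a finite-dimensional one.
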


The proof of Theorem \ref{thm_index_intro} is presented in Section \ref{sec_index}. It follows from a combination of Nicolaescu's arguments in \cite{nicolaescu1995maslov}, which allow us to identify the $H$--equivariant spectral flow of $B_t(D)$ with certain infinite-dimensional Maslov index (Theorem \ref{thm_compare}), Dostoglou-Salamon's adiabatic limit result in \cite{dostoglou1994cauchy} which reduces the infinite-dimensional Maslov index to some finite-dimensional Maslov index (Theorem \ref{thm_spec_comp_finite}), and some observations regarding the relation between tangent space of the extended moduli space $\hat{\clM}^{\mathfrak{g}}(\Sigma')$ and twisted harmonic forms.

After a further identification between the symplectic areas and Chern-Simons invariants coming into the definitions of $\SU(3)$--Casson invariants on symplectic and gauge-theoretic sides, we prove

\begin{Theorem}
\label{thm_BH_intro}
Let $Y = H_1 \cup_{\Sigma} H_2$ be an integer homology $3$--sphere equipped with a Heegaard splitting such that the genus of $\Sigma$ is at least $3$. Suppose $\Phi_{H}$ is a sufficiently small $G$--equivariant Hamiltonian perturbation on $\hat{\clM}^{\mathfrak{g}}(\Sigma')$ such that $\Phi_{H}(L_1)$ intersect $L_2$ non-degenerately. Let $[\tilde{\theta}] \in \Phi_{H}(L_1) \cap L_2$ be the intersection coming from perturbing the product connection $[\theta] \in L_1 \cap L_2$. Given any $p \in \Phi_{H}(L_1) \cap L_2$, let $D(p)$ be a disc in $\hat{\clM}^{\mathfrak{g}}(\Sigma')$ as above which connects $p$ and $[\tilde{\theta}]$. For any $p$, choose a point $\hat{p}$ in $L_1 \cap L_2$ close to $p$ and let $D(\hat{p})$ be a disc connecting $\hat{p}$ and $[\theta]$ which is close to $D(p)$. Then the Boden-Herald $\SU(3)$--Casson invariant is equal to
\begin{equation}
\begin{aligned}
\lambda_{\SU(3)}(Y) &= \sum_{[p] \in (\Phi_{H}(L_1) \cap L_2)^{irr}} (-1)^{\mu(D(p))} \\
&- \sum_{[p] \in (\Phi_{H}(L_1) \cap L_2)^{red}} (-1)^{\mu_{t}(D(p))}( \mu_{n}(D(p)) - \frac{\omega(D(\hat{p}))}{2 \pi^2} +1 ).
\end{aligned}
\end{equation}
In the above formula, $(\Phi_{H}(L_1) \cap L_2)^{irr}$ is the set of $G$--orbits of $(\Phi_{H}(L_1) \cap L_2)$ whose stabilizer is isomorphic to the center of $\SU(3)$ and $(\Phi_{H}(L_1) \cap L_2)^{red}$ is the set of $G$--orbits of $(\Phi_{H}(L_1) \cap L_2)$ whose stabilizer is isomorphic to $\U(1)$ which corresponds to perturbed $\SU(2)$--flat connections on $Y$. The index $\mu(D(p))$ is the equivariant Maslov index with respect to the trivial group, while the indices $\mu_{t}(D(p))$ and $\mu_{n}(D(p))$ are the trivial and nontrivial component of the $\U(1)$--equivariant Maslov index $\mu^{\U(1)}(D(p))$ respectively. The symbol $\omega(D(\hat{p}))$ is the symplectic area of the disc $D(\hat{p})$ with respect to the symplectic form on $\hat{\clM}^{\mathfrak{g}}(\Sigma')$.
\end{Theorem}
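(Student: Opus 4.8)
The plan is to derive Theorem~\ref{thm_BH_intro} by translating the gauge-theoretic definition of the Boden--Herald $\SU(3)$--Casson invariant, as recalled in Section~\ref{sec_preliminary}, term by term into the symplectic picture provided by the extended moduli space, using the compatible-pair machinery of Section~\ref{subsec_pert} together with the index identification of Theorem~\ref{thm_index_intro}. First I would recall that $\lambda_{\SU(3)}(Y)$ is assembled from (i) a signed count of irreducible perturbed flat $\SU(3)$--connections weighted by $(-1)$ to the relevant spectral flow, and (ii) a contribution from the perturbed $\SU(2)$--flat connections, each weighted by a correction term that combines a $\U(1)$--equivariant spectral flow with a Chern--Simons--type real number. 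The goal is to rewrite each ingredient in terms of $\Phi_H(L_1)\cap L_2$.

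The key steps, in order, are as follows. \emph{Step 1:} Invoke the compatible-pair correspondence (Section~\ref{subsec_pert}) to produce, from a sufficiently small $G$--equivariant Hamiltonian perturbation $\Phi_H$, a holonomy perturbation $f_{\mathbf q}$ on $Y$ such that gauge orbits of $f_{\mathbf q}$--perturbed flat connections correspond bijectively to $G$--orbits of $\Phi_H(L_1)\cap L_2$, with non-degeneracy preserved and with stabilizers matched --- so that $(\Phi_H(L_1)\cap L_2)^{irr}$ and $(\Phi_H(L_1)\cap L_2)^{red}$ correspond respectively to the irreducible perturbed flat $\SU(3)$--connections and the perturbed $\SU(2)$--flat connections (with $\U(1)$--stabilizer). \emph{Step 2:} For the irreducible orbits, identify the sign $(-1)^{\mu(D(p))}$ with the gauge-theoretic sign: apply Theorem~\ref{thm_index_intro} with $H$ trivial, comparing the Maslov index of a disc $D(p)$ joining $p$ to the perturbed product connection $[\tilde\theta]$ with the (ordinary) spectral flow along the path $B_t(D)$ of connections on $Y$, and use that the orientation conventions agree up to a global sign which is absorbed by using $[\tilde\theta]$ as basepoint. \emph{Step 3:} For the reducible orbits, apply Theorem~\ref{thm_index_intro} with $H=\U(1)$; the $\U(1)$--equivariant Maslov index $\mu^{\U(1)}(D(p))$ lives in the representation ring of $\U(1)$ and decomposes into the trivial part $\mu_t$ and the nontrivial part $\mu_n$, which match the trivial and nontrivial components of the $\U(1)$--equivariant spectral flow entering the Boden--Herald correction term. \emph{Step 4:} Identify the Chern--Simons correction term with the symplectic-area term: show that along the path $B_t(D(\hat p))$ of connections on $Y$ determined by a disc $D(\hat p)$ in $L_1\cap L_2$ joining $\hat p$ to $[\theta]$, the variation of the Chern--Simons functional equals $\tfrac{1}{?}\,\omega(D(\hat p))$ up to the normalization $2\pi^2$ appearing in the statement; this is the further identification between symplectic areas and Chern--Simons invariants alluded to before the theorem, and follows from the standard fact that the symplectic form on the extended moduli space is (a multiple of) the curvature of the Chern--Simons line bundle, so that integrating it over $D(\hat p)$ recovers the Chern--Simons difference of the endpoints. \emph{Step 5:} Since the perturbation is small, arrange $\hat p$ and $D(\hat p)$ close to $p$ and $D(p)$ so that $\omega(D(\hat p))$ well-approximates the relevant area and the correction term is computed on the unperturbed reducible locus, exactly as in the Boden--Herald construction; assemble Steps 1--4 to obtain the displayed formula.

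The main obstacle I expect is Step~4 together with the bookkeeping of constants and signs: pinning down the precise proportionality constant between the symplectic form $\omega$ on $\hat\clM^{\mathfrak g}(\Sigma')$ and the Chern--Simons $2$--form, so that the factor $\tfrac{\omega(D(\hat p))}{2\pi^2}$ comes out with exactly the normalization used in \cite{boden1998the}, requires carefully tracking the normalization of $\CS$ (the $\tfrac12\int_Y\Tr(\cdots)$ convention above), of the Atiyah--Bott--Goldman form on the extended moduli space, and of the gluing identity relating a disc in $\hat\clM^{\mathfrak g}(\Sigma')$ to a cobordism of connections on $Y$. A secondary subtlety is verifying that the basepoint $[\tilde\theta]$ on the symplectic side really corresponds to the perturbed product connection on $Y$ under the compatible pair, so that the reference point used to define the Maslov indices $\mu(D(p))$, $\mu_t(D(p))$, $\mu_n(D(p))$ matches the reference connection $\theta$ used to define the spectral flows in the definition of $\lambda_{\SU(3)}$; this is where the genus~$\ge 3$ hypothesis is used, to guarantee that $L_1$ and $L_2$ are smooth and that the extended moduli space has the expected local structure near the reducible loci. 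Once the normalization in Step~4 is fixed, the remaining arguments are formal consequences of Theorem~\ref{thm_index_intro} and the compatible-pair correspondence.
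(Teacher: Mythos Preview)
Your proposal is correct and follows essentially the same route as the paper: use the compatible-pair correspondence (Proposition~\ref{prop_set}) to match orbits, apply Theorem~\ref{thm_main} (the precise form of Theorem~\ref{thm_index_intro}) to replace spectral flows by equivariant Maslov indices in both the irreducible and reducible cases, invoke the symplectic-area/Chern--Simons identification (Lemma~\ref{lem_CSinv}), and then check term by term against the Boden--Herald formula, using independence of $\lambda_{\SU(3)}$ from the choice of holonomy perturbation.

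Two small points where your details diverge from the paper's actual arguments. First, the genus $\geq 3$ hypothesis is not used for smoothness of $L_1,L_2$ or local structure near reducibles; it is used (via \cite[Corollary 2.7]{daskalopoulos1995application}) to guarantee that the space of irreducible flat connections on each summand $E(n_i)$ over $\Sigma$ is simply connected, so that the loop in $\clA_F^{(i)}(\Sigma)$ formed from the boundary data can be filled in by a disc $A^{(i)}_{s,t}$. Without this, the disc $D(p)$ in the statement need not exist. Second, the Chern--Simons/area identification in the paper is not obtained by interpreting $\omega$ as curvature of a line bundle; it is a direct Stokes-theorem computation (Lemma~\ref{lem_CSinv}) on $[0,1]\times[-1,1]\times\Sigma$ using $\Tr(F\wedge F)$, which also fixes the factor of $\tfrac12$ and hence the $2\pi^2$ normalization without further bookkeeping.
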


The reader might have realized that this is parallel to the definition of Casson-Walker invariant \cite[equation (2.2)]{walker1992extension} for rational homology spheres. 

We hope that the symplectic approach used in this paper could help understand structural aspects of the generalized Casson invariants. Indeed, in \cite{walker1992extension}, Walker proved that the $\SU(2)$--Casson-Walker invariant is combinatorial in nature based on studying various isotopies of character varieties relevant to Dehn surgeries on $3$--manifolds. The combinatorial approach was generalized by Lescop in \cite{lescop1996global} to define invariants for all $3$--manifolds. The symplectic geometry provides much flexibility, though the use of Chern-Simons type invariants in our definition might put obstructions to the understanding. However, given a knot $K \subset Y$, let $Y_{1/k}(K)$ be the $3$--manifold obtained by doing Dehn surgery of $Y$ along $K$ with slope $1/k$,  it is reasonable to expect that the $\SU(n)$--Casson invariants of $Y_{1/k}(K)$ might satisfy the asymptotic growing rate $k^{n-1}$ as $k \rightarrow \infty$ for a fixed $n$. When $n = 2$, this is the classical surgery formula for Casson-Walker invariants, see \cite[Chapter 3]{walker1992extension}; when $n = 3$, such speculation is supported by the discussion in \cite[Section 6]{boden2005integer}. We will not discuss this topic further in this paper.

\subsection*{Acknowledgements} The author would like to thank his advisor John Pardon for constant support and encouragement. Special thanks are given to Boyu Zhang for numerous discussions and suggestions.

\section{Recap of $3$--dimensional gauge theory}
\label{sec_preliminary}

In this section, we recall some definitions and results from \cite{bai2020equivariant} which will be used in this paper. Interested readers should consult \emph{loc.cit.} for full details.

\subsection{Some linear algebra}

Let $G$ be a connected compact Lie group. Write the representation ring of $G$ as $\clR(G)$ and let $\clR(G) \otimes \bR$ be its natural $\bR$--extension.

\begin{Definition}
\label{def_clR_G}
Define $\clR_G$ to be the set of conjugation equivalence classes of the triple $(H,V,\rho)$, where $H$ is a closed subgroup of $G$, and $\rho:H\to\Hom(V,V)$ is a finite-dimensional $\bR$--linear representation of $H$. We say that $(H,V,\rho)$ is \emph{conjugation equivalent} to $(H',V',\rho')$, if there exists $g\in G$ and an isomorphism $\varphi:V\to V'$, such that
$$
	H' = g H g^{-1},
$$
and 
$$
\rho'(ghg^{-1}) =\varphi\circ \rho(h)\circ \varphi^{-1}.
$$
Given a closed subgroup $H \subset G$, let $\clR_G([H])$ be the subset of $\clR_G$ consisting of elements represented by representations of $H$. Any $\sigma \in \clR_G$ is called irreducible if it is represented by an irreducible representation.
\end{Definition}

\begin{Definition}
Let $H$ be a closed subgroup of $G$. Define
$$i^H_G: \clR_H \rightarrow \clR_G$$ be the tautological map by identifying subgroups of $H$ as subgroups of $G$.
\end{Definition}

Given a compact Lie group $H$, if $V$ is an irreducible $\bR$-linear $H$--representation, recall that $V$ is said to be of type $\bR, \bC$ or $\bH$, if $\hom_{H}(V,V) \cong \bR, \bC$ or $\bH$ respectively.

\begin{Definition}
Suppose $V$ is an orthogonal irreducible $H$--representation of type $\bK \in \{ \bR, \bC, \bH \}$. Given $r \in \bZ_{>0}$, define $d_{V}(r)$ to be the dimension of self-adjoint maps on $V^r$ intertwining with the $H$ action. In other words,
$$
d_{V}(r)=
\begin{cases}
\frac12 r(r+1) & \text{if $\bK=\bR$,} \\
r^2 & \text{if $\bK=\bC$,} \\
2r^2 - r & \text{if $\bK=\bH$}.
\end{cases}
$$
\end{Definition}

\begin{Definition}
Given $\sigma \in \clR_G$ with representative given by $(H,V,\rho)$ such that $V$ is an orthogonal $H$--representation, suppose the isotypic decomposition of $V$ as an $H$--representation is given by 
$$
	V\cong V_1^{\oplus a_1}\oplus\cdots\oplus V_m^{\oplus a_m}.
$$
Define the \emph{dimension function} of $\sigma$, denoted by $d(\sigma)$, to be the following quantity:
$$ d(\sigma) = \sum_{i=1}^{m} d_{V}(a_i). $$
\end{Definition}

This definition is independent of the choice of the representative of the equivalence class $\sigma$. According to Schur's lemma, $d(\sigma)$ is equal to the dimension of $H$--equivariant self-adjoint linear endomorphisms, denoted by $\sym_{H}(V)$, of the representation $V$. The following lemma is \cite[Lemma 3.18]{bai2020equivariant}.

\begin{Lemma}
Suppose $\sigma \in \clR_G([H])$ is represented by the triple $(H, V, \rho)$ such that $V$ is an orthogonal $H$--representation. Let $\sym_{H, \sigma}(V) =  \{ s \in \sym_{H}(V) | \sigma \cong (H, \ker(s), \rho|_{\ker(s)}) \}$. Then $\sym_{H, \sigma}(V)$ is a smooth submanifold of $\sym_{H}(V)$ of codimension $d(\sigma)$. If $K \subset \sym_{H}(V)$ is a linear space, let $\Pi: V \rightarrow \ker(s)$ be the orthogonal projection, then $s + K$ is transverse to $\sym_{H, \sigma}(V)$ at $s$ if and only if the map taking $l \in K$ to $(x \mapsto \Pi(l(x))) \in \sym_{H}(\ker(s))$ is surjective.
\end{Lemma}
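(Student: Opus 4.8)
The plan is to reduce everything to the classical description of corank loci of Hermitian matrices over the three real division algebras. First I would combine the isotypic decomposition $V\cong V_1^{\oplus a_1}\oplus\cdots\oplus V_m^{\oplus a_m}$ with Schur's lemma for orthogonal real representations: setting $\bK_i=\hom_{H}(V_i,V_i)\in\{\bR,\bC,\bH\}$, any $H$--equivariant endomorphism of $V$ is block diagonal along the isotypic summands, and on the $i$--th block it is an $a_i\times a_i$ matrix over $\bK_i$. Choosing the $H$--invariant metric on $V$ to restrict as an orthogonal sum of a fixed $H$--invariant metric on each $V_i$, the operator adjoint corresponds block-wise to the conjugate transpose for the standard involution on $\bK_i$, so $\sym_{H}(V)\cong\prod_{i=1}^{m}\mathrm{Herm}_{a_i}(\bK_i)$ as real vector spaces, where $\mathrm{Herm}_{a}(\bK)$ is the space of $a\times a$ $\bK$--Hermitian matrices. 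For $s=(s_1,\dots,s_m)$ the $H$--representation $\ker(s)$ is $\bigoplus_i V_i^{\oplus c_i}$ with $c_i$ the corank of $s_i$, so if $\sigma$ has isotypic multiplicities $b_i$, the condition $\sigma\cong(H,\ker(s),\rho|_{\ker(s)})$ translates into: $s_i$ has corank $b_i$ for each $i$ (up to the finite ambiguity of reindexing caused by $N_G(H)$ permuting the $V_i$, which only replaces $\sym_{H,\sigma}(V)$ by a finite disjoint union of the loci below, all of the same codimension). Hence it suffices to treat each factor $\mathrm{Herm}_{a}(\bK)$ separately.

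So next I would analyze the corank--$b$ locus $Z_{b}\subset\mathrm{Herm}_{a}(\bK)$ near a point $s_0$. Splitting $\bK^{a}=\ker(s_0)\oplus\ker(s_0)^{\perp}$, a nearby Hermitian matrix has block form $\begin{pmatrix}\alpha & \beta^{*}\\ \beta & \delta\end{pmatrix}$ with $\alpha,\delta$ Hermitian and $\delta$ invertible in a neighbourhood (at $s_0$ one has $\alpha_0=0$, $\beta_0=0$, $\delta_0$ invertible). Congruence by the unipotent matrix $\begin{pmatrix} I & 0\\ \delta^{-1}\beta & I\end{pmatrix}$ brings this to $\begin{pmatrix}\alpha-\beta^{*}\delta^{-1}\beta & 0\\ 0 & \delta\end{pmatrix}$, where $\beta^{*}\delta^{-1}\beta$ is Hermitian, so the matrix has corank exactly $b$ iff $\alpha=\beta^{*}\delta^{-1}\beta$. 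This exhibits $Z_{b}$ locally as a graph over the $(\beta,\delta)$--variables, hence a smooth submanifold of codimension $\dim_{\bR}\mathrm{Herm}_{b}(\bK)=d_{\bK}(b)$; differentiating the defining equation at $s_0$ (where the $\beta$--quadratic and the $\delta$--term have vanishing derivative) yields $T_{s_0}Z_{b}=\{\dot\alpha=0\}$, so the normal space is canonically the $(0,0)$--block $\mathrm{Herm}_{b}(\bK)$, i.e. the $\bK$--Hermitian forms on $\ker(s_0)$, and the projection onto it sends a tangent vector $l$ to $\Pi\circ l|_{\ker(s_0)}$ with $\Pi$ the orthogonal projection onto $\ker(s_0)$. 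Reassembling the blocks: $\sym_{H,\sigma}(V)$ is smooth of codimension $\sum_i d_{\bK_i}(b_i)=d(\sigma)$, with normal space at $s$ equal to $\sym_{H}(\ker(s))$ under $l\mapsto(x\mapsto\Pi(l(x)))$; and for a linear subspace $K\subset\sym_{H}(V)$ the transversality identity $T_s(s+K)+T_s\sym_{H,\sigma}(V)=\sym_{H}(V)$ holds exactly when $K$ surjects onto this normal space, which is the stated criterion.

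I expect the main obstacle to be the uniform bookkeeping across $\bR,\bC,\bH$, chiefly the quaternionic case: one must check that the Schur isomorphism $\Hom_{H}(V_i^{\oplus a_i},V_i^{\oplus a_i})\cong\mat_{a_i}(\bK_i)$ carries the $L^{2}$ adjoint to the conjugate transpose --- which pins down the normalization of the $H$--invariant metric on each isotypic summand --- and that the implicit function theorem computation of $Z_{b}$ is carried out over $\bR$ (so that ``rank'' means the $\bK$--dimension of the image and $Z_{b}$ is a genuine real submanifold), with the products $\beta^{*}\delta^{-1}\beta$ manipulated respecting noncommutativity. Once these are in place the codimension and normal--space computations are the standard determinantal calculations for symmetric and Hermitian matrices, and the surjectivity reformulation of transversality follows immediately from the identification of the normal space.
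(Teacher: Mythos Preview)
The paper does not actually supply a proof of this lemma; it simply quotes it as \cite[Lemma 3.18]{bai2020equivariant}. Your argument is correct and is the standard one: reduce via the isotypic decomposition and Schur's lemma to a product of spaces of $\bK_i$--Hermitian matrices, then use the Schur complement to exhibit the corank--$b$ locus locally as a graph over the off--diagonal and invertible blocks. The codimension and the identification of the normal space with $\sym_{H}(\ker(s))$ drop out immediately, and the transversality criterion is then just the statement that $K$ surjects onto that normal space. Your caveat about $N_G(H)$ possibly permuting irreducible $H$--types (so that $\sym_{H,\sigma}(V)$ may be a finite disjoint union of such loci) is accurate and harmless for the codimension count. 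The only genuine care required is, as you note, the bookkeeping that under a suitable $H$--invariant inner product the adjoint on $\Hom_H(V_i^{\oplus a_i},V_i^{\oplus a_i})\cong\mat_{a_i}(\bK_i)$ becomes conjugate transpose, and that the Schur--complement manipulation is done over $\bR$ (treating $\bK_i$ as a real algebra) so that noncommutativity in the quaternionic case causes no trouble; both are routine.
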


\subsection{Some $3$--dimensional gauge theory}

Let $Y$ be a smooth, oriented, closed $3$--manifold equipped with a Riemannian metric. Suppose $G$ is a compact, connected and simply-connected Lie group and let $\mathfrak{g}$ be its Lie algebra. Let $P = Y \times G$ be the product principal $G$--bundle over $Y$. We use $\theta$ to represent the trivial product connection on $P$. Fix an integer $k \geq 2$. Let $\clC(Y)$ be the affine space of $L^2_k$--connections over $P$, which is an affine space modeled on $L^2_k(T^*Y \otimes \mathfrak{g})$. Let $\clG_Y$ be the $L^2_{k+1}$--gauge group of $P$. Then $\clG_Y$ acts smoothly on $\clC(Y)$ via gauge transformation. Given $B = \theta + b \in \clC(Y)$, where $b \in L^2_k(T^*Y \otimes \mathfrak{g})$, let $\Stab(B)$ and $\Orb(B)$ be the stablizer group and orbit of $B$ under the $\clG_Y$--action respectively. For $H \subset G$, let $\clC(Y)^{H}$ be the subset of $\clC(Y)$ consisting of connections whose stabilizer contains a subgroup of $G$ which is conjugate to $H$.

Recall the Chern-Simons functional $\CS$ on $\clC(Y)$ is an $\bR$--valued smooth function 
\begin{equation}
\label{eqn_def_chern_simons}
	\CS(\theta+ b):= \frac12 \int_Y d_{\theta}b \wedge b + \frac13 \int_Y [b\wedge b] \wedge b.
\end{equation} 
Using the Riemannian metric on $Y$, we have 
$$(\grad \CS)(B) = *F_B\mbox{ for all }B\in\clC(Y), $$
where $F_B = d_{\theta}b + \frac12[b ,b]$ is the curvature of $B$ and $*$ is the Hodge star operator. In particular, critical points of $\CS$ correspond to flat connections over $P$.

Next we discuss about a slight generalization of the holonomy perturbation \cite{donaldson1987orientation}, \cite{floer1988instanton}. We follow the notation of \cite{kronheimer2011knot}. This form of holonomy perturbation will be used later in Section \ref{sec_index}. Regard the circle $S^1$ as $\bR /\bZ$. We identify the open unit disc in the plane $D^2$ with the open cube $(-1, 1)^2$.

\begin{Definition}
\label{def_cyl_datum}
A \emph{cylinder datum} is a tuple $(q_1,\cdots,q_m,\beta,h_s)$ with $m\in \bZ_{> 0}$ that satisfies the following conditions.
\begin{enumerate}
\item $q_i: S^1 \times D^2 \cong S^1 \times (-1, 1)^2 \rightarrow Y$ is a smooth immersion for $i=1,\cdots,m$;
\item there exists $\epsilon > 0$ such that $q_1, \dots, q_m$ coincide on $(-\epsilon, \epsilon) \times D^2$,
\item $\beta$ is a non-negative, compactly supported smooth function on $(-1, 1)$, such that 
$$\int_{-1}^1 \beta = 1;$$
\item For any $-1 \leq s \leq 1$, $h_s :G^m\to \bR$ is a smooth function that is invariant under the diagonal action of $G$ by conjugations and the family $\{ h_s \}_{-1 \leq s \leq 1}$ depends smoothly on $s$. 
\end{enumerate}
\end{Definition}

Given $B \in \clC(Y)$ and cylinder datum ${\bf q} = (q_1,\cdots,q_m,\beta,h_s)$, we can define the \emph{holonomy map} of $B$ by
\begin{align*}
	\hol_{\bf q}(B) : D^2 &\to G^m \\
	z &\mapsto \big(\hol_{q_1,z}(B),\cdots,\hol_{q_m,z}(B)\big),
\end{align*}
where $\hol_{q_i,z}(B)$ is the element in $G$ obtained by the holonomy of $B$ along the loop $q_i(S^1 \times \{z \})$ for $1 \leq i \leq m$ using the trivialization $P_{\{0\} \times \{z\}} \cong G$. Then the \emph{cylinder function} associated to ${\bf q}$ is defined to be the map
\begin{align*}
	f_{{\bf q}}: \clC(Y) &\to \bR \\
	B &\mapsto \int_{-1}^{1} \int_{-1}^1 \beta(\lambda) h_s(\hol_{\bf q}(B)) d\lambda ds,
\end{align*}
where we use the coordinate $z = (\lambda, s)$ under the identification $D^2 \cong (-1, 1)^2$. When $h_s = \chi(s) h$ such that $h: G^m \rightarrow \bR$ is a smooth function invariant under the diagonal action of $G$ by conjugations and $\chi: (-1, 1) \rightarrow \bR$ is a compactly supported smooth function with $\int_{-1}^{1} \chi(s)ds = 1$, the $2$--form $\beta(\lambda) \chi(s) d\lambda \wedge ds$ defines a non-negative bump $2$--form over $D^2$ with total integration $1$ and $f_{{\bf q}}$ is the usual cylinder function as in \cite{kronheimer2011knot}. These cylinder functions are smooth over $\clC(Y)$ and have well-defined formal gradients. Then one can follow the procedure in \cite[Definition 3.6]{kronheimer2011knot} to construct the Banach space of holonomy perturbations, which is denoted by $\clP$. Given any $\pi \in \clP$, write the associated function over $\clC(Y)$ as $f_\pi$ and let $V_{\pi}$ be the formal gradient of $f_{\pi}$. Let $DV_{\pi}$ be the derivative of $V_{\pi}$.

\begin{Definition}
A connection $B$ is called $\pi$--flat if 
$$*F_B + V_{\pi}(B) = 0.$$
In other words, $B$ is a critical point of the functional $\CS + f_{\pi}$.
\end{Definition}

Given a pair $B \in \clC(Y)$ and $\pi \in \clP$, we can define a self-adjoint Fredholm operator with index zero which has discrete real spectrum 
$$
K_{B,\pi}: L_{k}^2(\frg)\oplus L_k^2(T^*Y\otimes \frg) \to L_{k-1}^2(\frg)\oplus L_{k-1}^2(T^*Y\otimes \frg) 
$$
by 
\begin{equation}
\label{eqn_def_K}
	K_{B,\pi}(\xi,b):=( d_B^*b, d_B\xi + *d_B b+ DV_\pi(B) (b) ).
\end{equation}
When $B$ is $\pi$--flat, define $\Hess_{B,\pi}$ to be the operator
$$
\Hess_{B,\pi}: \ker d^*_B\cap L_k^2(T^*Y\otimes \frg) \to \ker d^*_B\cap L_{k-1}^2(T^*Y\otimes \frg)
$$
given by 
$$\Hess_{B,\pi} := *d_B+DV_\pi(B).$$ 
This is a self-adjoint Fredholm operator with index zero. Note that both $K_{B, \pi}$ and $\Hess_{B,\pi}$ are $\Stab(B)$--equivariant operators.

\begin{Definition}
A $\pi$--flat connection $B$ is called \emph{non-degenerate} if $\Hess_{B,\pi}$ is an isomorphism. A perturbation $\pi \in \clP$ is called \emph{non-degenerate} if all the critical points of $\CS + f_{\pi}$ are non-degenerate.
\end{Definition}

Because the classical holonomy perturbations all arise as special cases of our formulation of holonomy perturbation, the transversality results proved in \cite[Section 4]{bai2020equivariant} all carry over due to the abundance of perturbations. The following proposition is \cite[Corollary 4.20, 4.21]{bai2020equivariant}.

\begin{Proposition}
The set $\clP^{reg}\subset \clP$ of non-degenerate holonomy perturbations is of Baire second category.

For any pair $\pi_0, \pi_1 \in \clP^{reg}$, one can find a generic smooth path $\pi_t: [0,1] \to \clP$ from $\pi_0$ to $\pi_1$, such that there are only countably many $t$ where $\pi_t$ is degenerate. Moreover, for every such $t$ there is exact one degenerate critical orbit $\Orb(B)$ of $\CS+f_{\pi_t}$, and the kernel of $\Hess_{B,\pi_t}$ is an irreducible representation of $\Stab(B)$.
\end{Proposition}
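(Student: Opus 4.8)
The plan is to reduce the statement to the transversality package of \cite[Section 4]{bai2020equivariant} by using the fact, noted just above, that the Banach space $\clP$ of the present setup contains the classical holonomy-perturbation space as a subspace, so that the ``abundance of perturbations'' needed for Sard--Smale type arguments is inherited. First I would set up the parametrized zero locus
$$
\mathcal{Z} = \{ (B,\pi)\in \clC(Y)\times\clP : *F_B + V_\pi(B) = 0 \},
$$
the space of $\pi$--flat pairs, which carries a $\clG_Y$--action. Stratifying $\clC(Y)$ by the conjugacy class $[H]$ of the stabilizer, the goal on each stratum is that $\mathcal{Z}$ is cut out transversally and equivariantly: at a zero $(B,\pi)$ one needs the variation of $*F_B + V_\pi(B)$ in the $\pi$--direction, together with the connection-direction linearization $\Hess_{B,\pi}$ on the Coulomb slice, to surject. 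Since $\Hess_{B,\pi}$ is self-adjoint and Fredholm, this amounts to the perturbation variations spanning $\ker\Hess_{B,\pi}$ $\Stab(B)$--equivariantly, which is the heart of \cite[Section 4]{bai2020equivariant} and is proven there using only cylinder functions of the classical form $h_s = \chi(s)h$.

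Second, I would record the inclusion of the classical holonomy-perturbation Banach space into $\clP$ obtained by viewing each classical datum as a cylinder datum in the sense of Definition \ref{def_cyl_datum} --- exactly the identification described in the paragraph preceding the proposition. Because the surjectivity above already holds using variations lying in this classical subspace, it holds a fortiori using all of $\clP$. Hence $\mathcal{Z}$ is a Banach manifold on each stratum, and modulo gauge the projection $\mathcal{Z} \to \clP$ is Fredholm. The Sard--Smale theorem then produces a residual --- hence Baire second category --- set $\clP^{reg}\subset\clP$ such that for $\pi\in\clP^{reg}$, on every stratum the $\pi$--flat orbits are isolated and the associated $\Hess_{B,\pi}$ is an isomorphism; that is, every critical orbit of $\CS + f_\pi$ is non-degenerate. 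This is the first assertion.

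Third, for the path statement I would run the one-parameter version over the space of smooth paths $\{\pi_t\}_{t\in[0,1]}$ from $\pi_0$ to $\pi_1$; since $\pi_0,\pi_1\in\clP^{reg}$, nothing needs to be changed near the endpoints. Inside the parametrized zero locus one examines the degenerate sublocus, where $\Hess_{B,\pi_t}$ has nontrivial kernel. Near a $\pi_t$--flat $B$ with stabilizer $H$, the family of Hessians traces a path in the space of $H$--equivariant self-adjoint operators on the Coulomb slice, and by \cite[Lemma 3.18]{bai2020equivariant} the locus where the kernel has isomorphism type $\sigma$ is a submanifold of codimension $d(\sigma)$. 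Since $d(\sigma) = 1$ precisely when $\sigma$ is a single irreducible $H$--representation (one has $d_V(1)=1$ for each $\bK\in\{\bR,\bC,\bH\}$ while $d_V(r)>1$ for $r\ge 2$), a generic path in $\clP$ meets the degenerate locus transversally only along this codimension-one piece; higher-codimension degenerations, and the coincidence of two degenerate orbits at a single $t$, are ruled out by genericity. This yields countably many degenerate parameters $t$, each carrying a unique degenerate orbit $\Orb(B)$ whose $\Hess_{B,\pi_t}$--kernel is irreducible over $\Stab(B)$.

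The step I expect to be the main obstacle is the equivariant transversality in the first paragraph: one cannot simultaneously make the perturbation generic inside a single stratum, keep it $\clG_Y$--equivariant, and control its behavior on the closure strata, so the argument must be organized stratum by stratum and combined with index and dimension bookkeeping, exactly as in \cite[Section 4]{bai2020equivariant}. Since that reference already carries this through using only classical cylinder functions --- which, by the inclusion above, lie in $\clP$ --- no genuinely new analytic input is required here beyond the remark that enlarging the perturbation space can only make transversality easier.
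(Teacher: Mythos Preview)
Your proposal is correct and matches the paper's own treatment: the paper does not give an independent proof but simply observes that classical cylinder functions sit inside the enlarged space $\clP$, so the transversality results of \cite[Section 4]{bai2020equivariant} carry over verbatim, and then cites \cite[Corollary 4.20, 4.21]{bai2020equivariant}. Your sketch is exactly a fleshed-out account of that reduction, including the key codimension computation $d(\sigma)=1 \Leftrightarrow \sigma$ irreducible that drives the one-parameter statement.
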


Next we recall definitions and results which lead to the definition of $\SU(n)$--Casson invariants using gauge theory.

Let $\mathcal{V}$ be a Hilbert space and $\mathcal{D} \subset \mathcal{V}$ is a space whose image in $\mathcal{V}$ is dense and the inclusion map is a compact operator. Let $D_t: \mathcal{D} \rightarrow \mathcal{V}, t \in [0, 1]$ be a $1$--parameter family of self-adjoint operators whose spectra are real and discrete. Let $H$ be a connected compact Lie group and $\mathcal{V}$ admits a linear $H$--action which preserves $\mathcal{D}$. Suppose that for all $0<t<1$, the operator $D_t$ is equivariant with respect to the $H$--action. Given any $W$ which is a finite-dimensional irreducible $H$--representation, we have an induced family of self-adjoint operators on $\Hom_{H}(W, \mathcal{V})$ and let the spectral flow of this family be $n_W$. We use the convention that if $D_0$ or $D_1$ has nontrivial kernel, the spectral flow of the family $\{D_t\}_{0\leq t \leq1}$ is calculated by the spectral flow of $\{D_t + \epsilon \cdot \id\}_{0\leq t \leq1}$ where $\epsilon > 0$ is sufficiently small. Let $ \clR^{irr}(H)$ be the set of finite-dimensional irreducible $H$--representations.

\begin{Definition}
\label{def_equi_spectral_flow}
Define the \emph{$H$--equivariant spectral flow} of $\{ D_t \}_{0 \leq t \leq 1}$, denoted by $Sf_{H}(D_t)$ to be the following element in the representation ring $\clR(H)$:
\begin{equation}
\sum_{W \in \clR^{irr}(H)} n_W \cdot [W].
\end{equation}
\end{Definition}

Let $\pi \in \clP$ be a non-degenerate perturbation and suppose $B$ is a $\pi$--flat connection. Suppose $H$ is a closed subgroup of $G$ and the group $\Stab(B) = H$.

\begin{Definition}
Define $Sf_{H}(B, \pi) \in \clR(H)$ to be the $H$--equivariant spectral flow given by the homotopy from $K_{B, \pi}$ to $K_{\theta, 0}$ constructed from the linear homotopy from $(B, \pi)$ to $(\theta, 0)$.
\end{Definition}
Because of the homotopy invariance of the spectral flow, $Sf_{H}(B, \pi)$ could be calculated by any path of $H$--equivariant operators connecting $K_{B, \pi}$ and $K_{\theta, 0}$. 

Let $n \geq 2$ be an integer. Suppose the Lie group $G$ is given by $\SU(n)$ and $\bC^n$ is the standard representation of $\SU(n)$. We further assume that $Y$ is an integer homology sphere. Let $\Sigma_{n}$ be the set of tuples of pairs of integers
$$((n_1, m_1), \dots, (n_r, m_r)), r \geq 1$$
such that $n = \sum_{i=1}^r n_i m_i$. We also assume that $n_1 \leq \cdots \leq n_r$ and $m_i$'s are in non-decreasing order if the corresponding $n_i$'s are the same. By the discussion in \cite[Section 5]{bai2020equivariant}, given any connection $B \in \clC(Y)$, there exists $\sigma \in \Sigma_{n}$, such that the vector bundle
$$E = P \times_{\SU(n)} \bC^n$$
could be decomposed into
\begin{equation}
\label{eqn_bundle_decomp}
E \cong E(n_1)^{\oplus m_1} \oplus \cdots \oplus E(n_r)^{\oplus m_r}
\end{equation}
satisfying
\begin{enumerate}
\item $E(n_i)$ is a trivial $\bC$--vector bundle over $Y$ of rank $n_i$ for $1 \leq i \leq r$;
\item The connection $B$ preserves this decomposition and is decomposed into direct sums of irreducible unitary connections on each component $E(n_i)$.
\end{enumerate}
Such a connection $B$ is called \emph{of type $\sigma$}. Fix a $\sigma \in \Sigma_{n},$ connections with above decomposition define a subset $\clC_{\sigma}(Y) \subset \clC(Y)$. All elements in $\clC_{\sigma}(Y)$ have stabilizers conjugate to a subgroup $H_{\sigma} \subset \SU(n)$. Then $\clC_{\sigma}(Y) \subset \clC(Y)^{H_{\sigma}}$ and the linear path from $B \in \clC_{\sigma}(Y)$ to $\theta$ stays in $\clC(Y)^{H_{\sigma}}$.

Suppose $g: E \rightarrow E$ is a gauge transformation that is decomposed into the direct sum of gauge transformations $g_i: E(n_i) \rightarrow E(n_i)$ using \eqref{eqn_bundle_decomp}. If $B \in \clC_{\sigma}(Y)$, one can find $\tau_1, \dots, \tau_{r} \in \clR(H_{\sigma})$ such that the $H_{\sigma}$--equivariant spectral flow from $K_{B, \pi}$ to $K_{g(B), \pi}$ is given by
$$\sum_{n_i \geq 2} \deg(g_i) \cdot \tau_i$$
which is independent of $B$ and $\pi \in \clP$, where $\deg(g_i)$ is the degree of the map $H_{3}(Y;\bZ) \rightarrow H_{3}(\U(n);\bZ)$ induced by $g_i$. If $B$ is furthermore a flat connection and let $B_i$ be the $E(n_i)$--component of $B$, define
\begin{equation}
\CS_{\sigma}(B) = \sum_{n_i \geq 2} \frac{\CS(B_i)}{4 \pi^2 n_i} \cdot \tau_i \in \clR(H_{\sigma}) \otimes \bR.
\end{equation}

Now we are ready to recall the definition of the $\SU(n)$--Casson invariant. Let $\clC_{F}(Y) \subset \clC(Y)$ be the subspace consisting of flat connection on $P$. Let $\mathcal{U} \subset \clC(Y)$ be an open subset containing $\clC_{F}(Y)$ such that the inclusion
$$\clC_{F}(Y) \cap \clC(Y)^{H_{\sigma}} \hookrightarrow \mathcal{U} \cap \clC(Y)^{H_{\sigma}}$$
induces a one-to-one correspondence on connected components. By Uhlenbeck compactness, there exists $r_0 > 0$ such that as long as $\pi \in \clP$ satisfies $\| \pi \| < r_0$, the space of $\pi$--flat connections is contained in $U$. 

\begin{Definition}
\label{defn_ind}
Suppose $\| \pi \| < r_0$ is a non-degenerate perturbation and let $B$ be a $\pi$--perturbed flat connection of type $\sigma \in \Sigma_{n}$. Take $g \in \clG_{Y}$ such that $g(B) \in \clC(Y)^{H_{\sigma}}$ and let $g(\hat{B})$ be a genuine flat connection lying in the same connected component of $\mathcal{U} \cap \clC(Y)^{H_{\sigma}}$ as $g(B)$. Define $\ind(B, \pi)$ to be
\begin{equation}
Sf_{H_{\sigma}}(g(B), \pi) - [\ker d_{g(B)}] - \CS_{\sigma}(g(\hat{B})) \in \clR(H_{\sigma}) \otimes \bR,
\end{equation}
where $[\ker d_{g(B)}]$ is the $H_{\sigma}$--representation associated to the Lie algebra of $\Stab(B)$.
\end{Definition}

The following is proved in \cite[Section 5]{bai2020equivariant}.

\begin{Theorem}
There exists a set $\widetilde{\clR}_{\SU(n)}$, a well-defined map $\clR(H_{\sigma}) \otimes \bR \rightarrow \widetilde{\clR}_{\SU(n)}$ and a $\bZ$--submodule $\widetilde{\Bif}_{\SU(n)} \subset \bZ \widetilde{\clR}_{\SU(n)}$ with the following significance. Let $[\ind(B, \pi)]$ be the image of $\ind(B, \pi)$ in $\widetilde{\clR}_{\SU(n)}$. If $\| \pi \| < r_0$ is a non-degenerate perturbation, define
$$\ind_{\pi} := \sum_{\Orb(B) \mbox{ is $\pi$-- flat}} [\ind(B, \pi)] \in \bZ \widetilde{\clR}_{\SU(n)}.$$
Then for any pair of non-degenerate $\pi_{1}, \pi_{2} \in \clP$ such that $\| \pi_{i} \| < r_0$ for $i=1,2$, we have
$$\ind_{\pi_1} - \ind_{\pi_2} \in \widetilde{\Bif}_{\SU(n)}.$$
\end{Theorem}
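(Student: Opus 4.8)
The plan is to prove the statement by a wall-crossing analysis along a generic path of perturbations, after first constructing the target set $\widetilde{\clR}_{\SU(n)}$ and the bifurcation module $\widetilde{\Bif}_{\SU(n)}$ explicitly. First I would build $\widetilde{\clR}_{\SU(n)}$ as a quotient of the disjoint union $\bigsqcup_{\sigma \in \Sigma_{n}} \big(\clR(H_{\sigma}) \otimes \bR\big)$, imposing the relations generated by (i) the identifications of conjugate subgroups of $\SU(n)$ and the restriction/induction maps $i^{H}_{G}$ attached to inclusions $H_{\sigma'} \hookrightarrow H_{\sigma}$ coming from the bundle decomposition \eqref{eqn_bundle_decomp}, and (ii) the relation that kills the "ambient" contribution so that the class of $\ind(B,\pi)$ does not depend on the representative $g(B)$ nor on the auxiliary genuine flat connection $g(\hat{B})$ chosen in Definition \ref{defn_ind}. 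The map $\clR(H_{\sigma}) \otimes \bR \to \widetilde{\clR}_{\SU(n)}$ is then the tautological one, and one checks, using the path-independence statements already recorded above for $Sf_{H_{\sigma}}$, for $\CS_{\sigma}$, and for the degree term, that $[\ind(B,\pi)]$ is well defined and that $\ind_{\pi}$ is a legitimate element of $\bZ\widetilde{\clR}_{\SU(n)}$.

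Next comes the core. Given non-degenerate $\pi_{1}, \pi_{2}$ with $\|\pi_{i}\| < r_{0}$, I would invoke the Proposition recalled above to choose a generic path $\pi_{t}$ from $\pi_{1}$ to $\pi_{2}$, degenerate only at finitely many times $t_{1} < \cdots < t_{N}$, at each of which there is a single degenerate orbit $\Orb(B_{j})$ with $\ker\Hess_{B_{j},\pi_{t_{j}}}$ an irreducible $\Stab(B_{j})$--representation $W_{j}$, while away from these times the $\pi_{t}$--flat moduli space is a finite collection of non-degenerate orbits varying smoothly. That $\ind_{\pi_{t}}$ is locally constant on each interval $(t_{j-1}, t_{j})$ follows from continuity of $Sf_{H_{\sigma}}$, $\CS_{\sigma}$, and the degree term together with the implicit function theorem applied in an equivariant Coulomb slice. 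It then remains to compute the jump across each $t_{j}$, for which there are two model situations: (a) a birth/death, in which a cancelling pair of orbits $\Orb(B^{\pm})$ with stabilizers equal to that of $B_{j}$ appears or disappears, and a one-variable eigenvalue-crossing argument identifies $[\ind(B^{+},\pi)] + [\ind(B^{-},\pi)]$ with a fixed element built from $[W_{j}]$; and (b) a type change, in which a single orbit persists through $t_{j}$ but its isotropy jumps between subgroups $H_{\sigma} \subset H_{\sigma'}$. In case (b) one decomposes the $H_{\sigma'}$--equivariant operators $K_{B,\pi}$, $\Hess_{B,\pi}$ into $H_{\sigma}$--isotypic blocks, uses the local normal form around the reducible locus — here the normal-form Lemma recalled above on $\sym_{H,\sigma}(V)$ controls precisely how the kernel's type degenerates — to express the change in $Sf_{H_{\sigma'}}$, and reconciles it with the simultaneous change in the real correction $\CS_{\sigma}$ and in the degree term. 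In either case the change of $\ind_{\pi_{t}}$ across $t_{j}$ is a single, explicitly described element $\delta_{j} \in \bZ\widetilde{\clR}_{\SU(n)}$.

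With this in hand, I would define $\widetilde{\Bif}_{\SU(n)}$ to be the $\bZ$--submodule of $\bZ\widetilde{\clR}_{\SU(n)}$ generated by all elements of the two local types produced in (a) and (b); by the equivariant transversality theory of \cite{bai2020equivariant} this generating set depends only on $\SU(n)$ and not on $Y$ or on the path, so $\widetilde{\Bif}_{\SU(n)}$ is intrinsic. Each $\delta_{j}$ lies in $\widetilde{\Bif}_{\SU(n)}$ by construction, hence $\ind_{\pi_{1}} - \ind_{\pi_{2}} = \sum_{j=1}^{N} \delta_{j} \in \widetilde{\Bif}_{\SU(n)}$, as claimed.

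The step I expect to be the main obstacle is case (b): reconciling the integer-valued jump in the $H_{\sigma'}$--equivariant spectral flow with the jump in the $\bR$--valued Chern-Simons correction $\CS_{\sigma}$ and with the change of the reference flat connection $g(\hat{B})$ when the reducibility pattern of the connection changes. Making this precise requires a careful local model for the moduli space near a reducible perturbed flat connection at which a subfactor $E(n_{i})$ becomes irreducible (or vice versa), and it is there that the normal-form Lemma on $\sym_{H,\sigma}(V)$ and the decomposition \eqref{eqn_bundle_decomp} do the real work; ensuring the bookkeeping of $\CS_{\sigma}$ is consistent, so that $\widetilde{\Bif}_{\SU(n)}$ is genuinely independent of $Y$, is the delicate point.
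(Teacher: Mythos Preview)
The paper does not actually prove this theorem; it is stated immediately after Definition~\ref{defn_ind} with the attribution ``The following is proved in \cite[Section 5]{bai2020equivariant}'' and is recalled in Section~\ref{sec_preliminary} purely as background.  There is therefore no proof in the present paper to compare your proposal against.

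That said, your outline follows the natural wall-crossing strategy and is broadly the shape such an argument must take.  One point in your sketch deserves correction: your case (b), ``a single orbit persists through $t_{j}$ but its isotropy jumps between subgroups $H_{\sigma}\subset H_{\sigma'}$,'' is not the right picture.  The stabilizer of a smoothly continuing critical orbit cannot change discontinuously.  What actually occurs when $\ker\Hess_{B_{j},\pi_{t_{j}}}$ is a \emph{nontrivial} irreducible $\Stab(B_{j})$--representation is that orbits with strictly smaller stabilizer are created or destroyed near $\Orb(B_{j})$, while the orbit $\Orb(B_{j})$ itself (with its original, larger isotropy) may persist.  The local model is an equivariant Morse bifurcation on the slice, governed by the normal-form Lemma on $\sym_{H,\sigma}(V)$ you cite, and the jump $\delta_{j}$ must account both for the index change along the persisting high-isotropy branch and for the indices of the newly born or dying low-isotropy orbits.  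Once (b) is reformulated this way, the rest of your plan --- including the point you correctly flag as delicate, namely matching the integer spectral-flow jump against the real $\CS_{\sigma}$ correction when the reducibility type changes --- is the substance of the argument carried out in \cite{bai2020equivariant}.
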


Therefore the image of $\ind_{\pi}$ in the $\bZ$--module $\bZ \widetilde{\clR}_{\SU(n)} / \widetilde{\Bif}_{\SU(n)}$ is a topological invariant of the integer homology sphere $Y$. One can then use this quantity to construct Casson-type invariants using different normalizations. In particular, the $\bR$--valued $\SU(3)$--Casson invariant of Boden-Herald \cite{boden1998the} could be recovered as a special case of the above construction.

Our goal is to present $\ind(B, \pi)$ using geometric quantities constructed from finite-dimensional symplectic geometry when $Y$ is equipped with a Heegaard splitting and the perturbation $\pi$ has certain adapted behavior. This is what the \emph{symplectic formula} is referring to.

\section{Extended moduli spaces}
\label{sec_Extended}

In this section, we recall the construction of the extended moduli space in \cite{jeffrey1994extended} and discuss about some of its properties. Most importantly, we will see that the essential information of the tangent spaces of the extended moduli space is encoded in the corresponding space of twisted harmonic forms. We also recall the construction of Lagrangian submanifolds in extended moduli spaces from \cite{manolescu2012floer} which will be used in this paper. 

Suppose $\Sigma$ is a closed Riemann surface with genus $h \geq 2$ equipped with a Riemannian metric. Let $p \in \Sigma$ be a base point. Define $\Sigma' = \Sigma \backslash D^2_p$ to be the Riemann surface with one boundary component obtained by removing a disc $D^2_p$ around $p$. Suppose $G$ is a simply-connected compact Lie group and $\mathfrak{g}$ is its Lie algebra. We identify $\mathfrak{g}$ with its dual $\mathfrak{g}^*$ using the Killing form. Let $P$ be the product principal $G$--bundles over $\Sigma$. We continue to use $\theta$ to represent the trivial product connection.

Fix an integer $k \geq 2. $Let $\clA(\Sigma)$ and $\clA(\Sigma')$ be the affine space of $L_k^2$--connections over $P$ and $P|_{\Sigma'}$ respectively. Denote the $L^2_{k+1}$--gauge groups of $P$ and $P|_{\Sigma'}$ by $\clG_{\Sigma}$ and $\clG_{\Sigma'}$ respectively. Let $\mathcal{M}(\Sigma) = \clA_F(\Sigma) / \clG_{\Sigma}$ be the moduli space of flat connections over $P$, where $\clA_F(\Sigma)$ is the space of flat connections over $P$. We also need the group $\clG_{\Sigma'}^c$ consisting of elements in $\clG_{\Sigma'}$ which are identity near the boundary of $\Sigma'$.

\begin{Definition}
Let $s$ be the coordinate on $S^1 \cong \partial \Sigma'$. Define
$$\clA_{F}^{\mathfrak{g}}(\Sigma') := \{ A \in \clA(\Sigma') \big| F_A = 0,  A = \theta + \xi ds \mbox{ near the boundary for some $\xi \in \mathfrak{g}$} \}.$$
Then the \emph{extended moduli space} $\mathcal{M}^{\mathfrak{g}}(\Sigma')$ is defined to be the quotient
$$\mathcal{M}^{\mathfrak{g}}(\Sigma') := \clA_{F}^{\mathfrak{g}}(\Sigma') / \clG_{\Sigma'}^c.$$
\end{Definition}

$\mathcal{M}^{\mathfrak{g}}(\Sigma')$ has a finite-dimensional description similar to character varieties. Let us choose $\{ \alpha_i, \beta_i \}_{1 \leq i \leq h}$ to be a set of simple loops on $\Sigma$ with common base point at some $p' \in \partial \Sigma' \subset \Sigma' \subset \Sigma$ which gives the standard presentation of $\pi_{1}(\Sigma)$. Choose a simple loop $\gamma$ in $\Sigma'$ based at $p'$ wrapping around the boundary once then we have $\Pi_{i=1}^h [\alpha_i, \beta_i] = \gamma$. Using the holonomy map, one can show that
\begin{equation}
\label{eqn_extended}
\mathcal{M}^{\mathfrak{g}}(\Sigma') = \{A_i, B_i \in G, \forall 1\leq i \leq h, \xi \in \mathfrak{g} \big| \Pi_{i=1}^h [A_i, B_i]  =\exp(\xi) \}.
\end{equation}
We will use both the gauge-theoretic definition and the representation-theoretic definition in the sequel. Note that $\mathcal{M}^{\mathfrak{g}}(\Sigma')$ admits a $G$--action: given $g \in G$, it acts on $\mathcal{M}^{\mathfrak{g}}(\Sigma')$ by
$$((A_i, B_i)_{1 \leq i \leq h}, \xi) \mapsto ((g A_i g^{-1}, g B_i g^{-1})_{1 \leq i \leq h}, \Ad_{g}(\xi)),$$
where $\Ad_{g}$ is the adjoint action of $g$ on $\mathfrak{g}$.
Use the Killing form on $\mathfrak{g}$ to define a norm on $\mathfrak{g}$, the next proposition is proved in \cite{jeffrey1994extended}. 

\begin{Proposition}
\label{prop_extended}
\begin{enumerate}
\item The open $G$--invariant subset $\hat{\mathcal{M}}^{\mathfrak{g}}(\Sigma') \subset \mathcal{M}^{\mathfrak{g}}(\Sigma')$ defined by $|\xi| < \delta$ for some sufficiently small $\delta > 0$ is actually a smooth manifold;
\item There exists a symplectic form $\omega$ on $\hat{\mathcal{M}}^{\mathfrak{g}}(\Sigma')$ and the $G$--action defines a smooth Hamiltonian action with respect to $\omega$. The moment map $\mu: \hat{\mathcal{M}}^{\mathfrak{g}}(\Sigma') \rightarrow \mathfrak{g}^*$ of this $G$--action is given by 
$$((A_i, B_i)_{1 \leq i \leq h}, \xi) \mapsto \xi;$$
\item The moment map reduction $\mu^{-1}(0) / G$ is naturally identified with $\mathcal{M}(\Sigma)$;
\item If $[A] \in \mathcal{M}(\Sigma)$ is represented by some $A \in \clA_{F}^{\mathfrak{g}}(\Sigma')$, then the stabilizer group of $A$ under the $\clG_{\Sigma'}$--action, written as $\Stab(A)$, is isomorphic to $\Stab([A])$, the stabilizer of $[A] \in \mathcal{M}(\Sigma)$ under the $G$--action.
\end{enumerate}
\end{Proposition}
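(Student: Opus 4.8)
The plan is to work throughout with the finite-dimensional model \eqref{eqn_extended}, viewing $\mathcal{M}^{\mathfrak{g}}(\Sigma')$ as the fiber $\Phi^{-1}(e)$ of the smooth map
$$\Phi\colon G^{2h}\times\mathfrak{g}\longrightarrow G,\qquad \Phi\big((A_i,B_i)_{i},\xi\big)=\Big(\textstyle\prod_{i=1}^h[A_i,B_i]\Big)\exp(-\xi);$$
the identification of $\Phi^{-1}(e)$ with $\clA_F^{\mathfrak{g}}(\Sigma')/\clG_{\Sigma'}^c$ is the standard holonomy correspondence along $\{\alpha_i,\beta_i,\gamma\}$, under which the near-boundary constant $\xi$ of a connection matches the holonomy $\exp(\xi)$ around $\gamma$, and which is equivariant for simultaneous conjugation (this produces the $G$--action). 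All four assertions can then be read off from properties of $\Phi$ and of this correspondence.

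For (1), I would show that $e$ is a regular value of $\Phi$ along the locus $\{|\xi|<\delta\}$. In fact the partial differential of $\Phi$ in the $\mathfrak{g}$--variable alone is already surjective there: at a point with $\prod[A_i,B_i]=\exp(\xi)$ this partial differential, transported to $T_eG=\mathfrak{g}$ by left and right translations, is $-(d\exp)_{-\xi}$ up to an invertible translation, and $(d\exp)_{\pm\xi}$ is an isomorphism whenever $\ad_\xi$ has no nonzero eigenvalue in $2\pi\sqrt{-1}\,\bZ$, which holds for $|\xi|<\delta$ with $\delta$ small. Hence $\Phi$ is submersive on $\Phi^{-1}(e)\cap\{|\xi|<\delta\}$, so $\hat{\mathcal{M}}^{\mathfrak{g}}(\Sigma')$ is a smooth $G$--invariant manifold of dimension $2h\dim G$, the $G$--invariance being immediate from the equivariance of $\Phi$.

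For (2), I would exhibit $\omega$ on the gauge-theoretic side as the Atiyah--Bott $2$--form $(a,b)\mapsto\int_{\Sigma'}\langle a\wedge b\rangle$ on $\clA(\Sigma')$ (built from the Killing form) augmented by Jeffrey's boundary correction term, which depends only on $\xi$ and on the near-boundary components of $a,b$ and is precisely what makes the sum descend to a \emph{closed} $2$--form on $\mathcal{M}^{\mathfrak{g}}(\Sigma')$ despite $\partial\Sigma'\neq\emptyset$; closedness is then a direct computation. Nondegeneracy on $\hat{\mathcal{M}}^{\mathfrak{g}}(\Sigma')$ uses $|\xi|<\delta$ once more: the tangent space is cut out as a space of $d_A$--closed $1$--forms with prescribed boundary behavior, and nondegeneracy of the pairing follows from Poincaré--Lefschetz duality for the $d_A$--twisted complex on $(\Sigma',\partial\Sigma')$ together with the invertibility of $(d\exp)_\xi$. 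For the moment map, the residual $G$--action on $\clA_F^{\mathfrak{g}}(\Sigma')/\clG_{\Sigma'}^c$ is generated by \emph{constant} gauge transformations, so the fundamental vector field of $X\in\mathfrak{g}$ at $A$ is $-d_A X$; contracting with $\omega$, using $d_A a=0$ and Stokes' theorem, the bulk term reduces to $-\int_{\partial\Sigma'}\langle X,a\rangle$, which after cancellation against the boundary correction equals $d\langle\xi,X\rangle(a)$. Hence $\langle\mu((A_i,B_i),\xi),X\rangle=\langle\xi,X\rangle$, i.e. $\mu=\xi$.

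For (3), $\mu^{-1}(0)=\{(A_i,B_i,0):\prod[A_i,B_i]=e\}=\Hom(\pi_1\Sigma,G)$, so $\mu^{-1}(0)/G$ is the $G$--character variety of $\Sigma$, namely $\mathcal{M}(\Sigma)$; and since $\omega$ restricts on the $\xi=0$ slice to the Atiyah--Bott form, the reduced form agrees with the Goldman form. For (4), a $u\in\clG_{\Sigma'}$ with $u\cdot A=A$ satisfies $d_A u=0$, so $u$ is $A$--parallel; with $\xi=0$ one has $A=\theta$ near $\partial\Sigma'$, so $u$ is locally constant there, and parallel transport identifies $u$ with a value $g\in G$ at the basepoint that must centralize the holonomy of $A$; as this holonomy extends over $\Sigma$, $\Stab(A)\cong\{g\in G:\ g\ \text{centralizes}\ \rho_A(\pi_1\Sigma)\}=\Stab([A])$. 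I expect part (2) to be the main obstacle — producing the correct boundary correction term, verifying closedness, and, most delicately, pinning down the exact range $|\xi|<\delta$ of nondegeneracy by matching the twisted-cohomology model of $T\hat{\mathcal{M}}^{\mathfrak{g}}(\Sigma')$ against the singular locus of $d\exp$; assertions (1), (3), (4) are comparatively formal once the holonomy correspondence and this symplectic setup are in place.
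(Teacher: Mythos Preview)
The paper does not actually prove this proposition: the sentence immediately preceding it reads ``the next proposition is proved in \cite{jeffrey1994extended},'' and no argument is given. So there is no ``paper's own proof'' to compare against; what you have written is essentially an outline of Jeffrey's original arguments, and it is correct in its main lines.

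A few remarks on your outline versus Jeffrey's treatment. Your approach to (1) via regularity of $\Phi$ using only the $\mathfrak{g}$--partial derivative is exactly Jeffrey's argument and is the cleanest route. For (2), Jeffrey in fact constructs $\omega$ on the finite-dimensional model $G^{2h}\times\mathfrak{g}$ (pulling back the canonical $2$--form from $G^{2h}$ and adding a term built from the Maurer--Cartan form and $d\xi$) rather than by working on $\clA(\Sigma')$ with a boundary correction; the two descriptions agree, but the finite-dimensional route makes closedness and the moment-map identity into short algebraic computations, whereas your infinite-dimensional description requires more care with the Stokes bookkeeping. Your nondegeneracy argument via Poincar\'e--Lefschetz duality and invertibility of $(d\exp)_\xi$ is correct and matches the spirit of Jeffrey's paper; note also that the paper's own Corollary~\ref{cor_slice} later gives an independent description of the tangent space at points with $\xi=0$. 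Parts (3) and (4) are, as you say, formal once the holonomy correspondence is set up.

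In short: your proposal is a sound reconstruction of the cited result, and (2) is indeed where the real content lies; the paper itself takes the shortcut of citing \cite{jeffrey1994extended} outright.
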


Making $\delta$ smaller if necessary, we can assume that the exponential map $\exp: \mathfrak{g} \rightarrow G$ is a diffeomorphism onto its image over $\{ \xi \in \mathfrak{g} \big| | \xi | < \delta \}$. Then the following statement follows from the equation \eqref{eqn_extended}.

\begin{Lemma}
\label{lem_trivial}
The map $\hat{\mathcal{M}}^{\mathfrak{g}}(\Sigma') \rightarrow G^{2h}$ defined by mapping $((A_i, B_i)_{1 \leq i \leq h}, \xi)$ to the $G^{2h}$--component is a $G$--equivariant embedding, where $G^{2h}$ is equipped with the diagonal conjugation action. \qed
\end{Lemma}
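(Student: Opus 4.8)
The plan is to verify this statement directly from the finite-dimensional description \eqref{eqn_extended} together with Proposition \ref{prop_extended}(1), which guarantees that $\hat{\clM}^{\frg}(\Sigma')$ is a smooth manifold on which the relevant map is defined. First I would observe that, by construction, the projection
$$\Psi: \hat{\clM}^{\frg}(\Sigma') \to G^{2h}, \qquad ((A_i, B_i)_{1 \leq i \leq h}, \xi) \mapsto (A_1, B_1, \dots, A_h, B_h),$$
is manifestly smooth, since it is the restriction of the linear projection $G^{2h} \times \{ |\xi| < \delta \} \to G^{2h}$ to the smooth submanifold cut out by $\Pi_{i=1}^h [A_i, B_i] = \exp(\xi)$. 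It is $G$--equivariant because the $G$--action on $\hat{\clM}^{\frg}(\Sigma')$ was defined componentwise as simultaneous conjugation on the $G^{2h}$--factor (together with $\Ad_g$ on $\xi$), which matches the diagonal conjugation action on $G^{2h}$ under $\Psi$.

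Next I would show $\Psi$ is injective. Suppose $((A_i, B_i), \xi)$ and $((A_i, B_i), \xi')$ both lie in $\hat{\clM}^{\frg}(\Sigma')$ and have the same image. Then $\exp(\xi) = \Pi_{i=1}^h [A_i, B_i] = \exp(\xi')$, and since we have shrunk $\delta$ so that $\exp$ restricts to a diffeomorphism on $\{|\xi| < \delta\}$, we conclude $\xi = \xi'$. Hence $\Psi$ is injective. To see that it is an immersion, I would compute the differential: a tangent vector to $\hat{\clM}^{\frg}(\Sigma')$ at a point is a pair consisting of a tangent vector $v \in T_{(A_i,B_i)}G^{2h}$ and $\dot\xi \in \frg$ satisfying the linearization of the constraint $\Pi [A_i, B_i] = \exp(\xi)$; the map $d\Psi$ sends this pair to $v$. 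If $d\Psi(v, \dot\xi) = 0$ then $v = 0$, and the linearized constraint forces $d(\exp)_\xi(\dot\xi) = 0$, so $\dot\xi = 0$ since $\exp$ is a local diffeomorphism near $\xi$. Thus $d\Psi$ is injective, so $\Psi$ is an injective immersion.

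Finally, to upgrade "injective immersion" to "embedding" I would invoke properness: $\hat{\clM}^{\frg}(\Sigma')$ is a subset of the compact space $G^{2h} \times \overline{\{|\xi| \le \delta/2\}}$ after shrinking $\delta$ (or one can note that the source, being a closed subset of $G^{2h} \times \{|\xi| \le \delta'\}$ for $\delta' < \delta$, is compact, and a continuous injection from a compact space to a Hausdorff space is a homeomorphism onto its image). I expect the only subtlety — hardly an obstacle — is bookkeeping about which version of $\delta$ is in force: the statement is about the open manifold $\hat{\clM}^{\frg}(\Sigma')$ defined by the strict inequality $|\xi| < \delta$, so to get a genuine embedding rather than merely an injective immersion one should either appeal to the fact that an injective immersion with locally compact (hence closed-in-ambient) image is an embedding, or simply exhibit a continuous inverse on the image using that $\xi$ is recovered from $\Pi[A_i,B_i]$ via $\exp^{-1}$, which is continuous on the relevant neighborhood. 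The latter is cleanest: $\Psi^{-1}$ on $\Psi(\hat{\clM}^{\frg}(\Sigma'))$ is given by $(A_i, B_i) \mapsto ((A_i, B_i), \exp^{-1}(\Pi_{i=1}^h[A_i,B_i]))$, which is visibly continuous, completing the proof.
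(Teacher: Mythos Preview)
Your proof is correct and follows the same idea as the paper, which treats the lemma as immediate once $\exp$ is a diffeomorphism on $\{|\xi|<\delta\}$: the inverse on the image is $(A_i,B_i)\mapsto((A_i,B_i),\exp^{-1}(\Pi_{i=1}^h[A_i,B_i]))$, exactly as you write at the end. Your intermediate discussion of immersions and properness is unnecessary but harmless; the explicit smooth inverse already gives the embedding directly.
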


Introduce the notation
$$\clA^0_{F}(\Sigma) := \{ A \in \clA_{F}(\Sigma) \big| A|_{D^2_p} \mbox{ is given by the product connection} \}.$$
Then $\clA^0_{F}(\Sigma)$ could be viewed as a subspace of $\clA_{F}^{\mathfrak{g}}(\Sigma')$ corresponding to $\xi = 0$. Choose $A \in \clA^0_{F}(\Sigma)$. Denote by $[A] \in \hat{\mathcal{M}}^{\mathfrak{g}}(\Sigma')$ the image of $A$ in $\hat{\mathcal{M}}^{\mathfrak{g}}(\Sigma')$, we now give an explicit description of the tangent space $T_{[A]} \hat{\mathcal{M}}^{\mathfrak{g}}(\Sigma')$. 

Let $\Omega^{i}(\Sigma') \otimes \mathfrak{g}$ (resp. $\Omega^{i}(\Sigma) \otimes \mathfrak{g}$) be the space of $\mathfrak{g}$--valued $i$--forms on $\Sigma'$ (resp. $\Sigma$) for $i=0,1,2$. Also write the subspace of $\Omega^{i}(\Sigma') \otimes \mathfrak{g}$ consisting of elements with compact support as $\Omega^{i}_c(\Sigma') \otimes \mathfrak{g}$. Recall that $s$ is the coordinate on $\partial \Sigma'$. Define
$$\Omega^{1, \mathfrak{g}}(\Sigma') = \{ a \in \Omega^{1}(\Sigma') \otimes \mathfrak{g} \big| a = \xi ds \mbox{ near the boundary for some $\xi \in \mathfrak{g}$ } \}.$$
We have the elliptic complex
\begin{equation}
\label{eqn_def_cpx}
\begin{tikzcd}
\Omega_c^0(\Sigma') \otimes \mathfrak{g} \arrow[r, "d_A"] & {\Omega^{1,\mathfrak{g}}(\Sigma')} \arrow[r, "d_A"] & \Omega_c^2(\Sigma') \otimes \mathfrak{g}.
\end{tikzcd}
\end{equation}
Then the tangent space $T_{[A]} \hat{\mathcal{M}}^{\mathfrak{g}}(\Sigma')$ is identified with the first cohomology group of \eqref{eqn_def_cpx}, written as $\tilde{H}_{A}^{1, \mathfrak{g}}$. The bilinear form 
$$\omega_{\Sigma'}(a_1, a_2) = \int_{\Sigma'} \Tr(a_1 \wedge a_2)$$ 
over $\Omega^{1,\mathfrak{g}}(\Sigma')$ defines the symplectic form on $\hat{\mathcal{M}}^{\mathfrak{g}}(\Sigma')$ under this identification.

On the other hand, view $A \in \clA_{F}(\Sigma)$, we have the complex 
\begin{equation}
\begin{tikzcd}
\Omega^0(\Sigma) \otimes \mathfrak{g} \arrow[r, "d_A"] & {\Omega^1(\Sigma) \otimes \mathfrak{g}} \arrow[r, "d_A"] & \Omega^2(\Sigma) \otimes \mathfrak{g}.
\end{tikzcd}
\end{equation}
The cohomolgy groups $H^{i}_{A}$ for $i=0,1,2$ could be identified with the space of twisted harmonic $i$--forms. Similarly, the bilinear form 
$$\omega_{\Sigma}(a_1, a_2) = \int_{\Sigma} \Tr(a_1 \wedge a_2)$$ 
over $\Omega^1(\Sigma) \otimes \mathfrak{g}$ is non-degenerate and descends down to a non-degenerate pairing on $H^{1}_{A}$.

We need to introduce some auxiliary objects to understand the relation between $\tilde{H}_{A}^{1, \mathfrak{g}}$ and $H^{1}_{A}$ as symplectic vector spaces. Let $\overline{\Sigma} \cong \Sigma \backslash \{p\}$ be the Riemann surface obtained from $\Sigma'$ by attaching a cylindrical end $S^1 \times [0, +\infty)$ along $\partial \Sigma'$. We still use $P$ to represent the product principal $G$--bundle over $\overline{\Sigma}$. Choose $\epsilon > 0$ to be a positive real number. Define $L^{2, \epsilon}_{k}(\Omega^{i}(\overline{\Sigma}) \otimes \mathfrak{g})$ the completion of the space of $\mathfrak{g}$--valued $i$--forms over $\Sigma'$ under the norm
$$\| a \|_{L^{2, \epsilon}_{k}} = (\int_{\overline{\Sigma}} e^{\epsilon \chi} (|a|^2 + |\nabla a|^2 + \cdots + |\nabla^k a|^2))^{\frac12},$$
where $\chi$ is a smooth function on $\overline{\Sigma}$ which is equal to $1$ over $S^1 \times [0, +\infty)$ and $\nabla$ is some background connection. This is the norm associated to the inner product
$$\langle a_1, a_2 \rangle_{k, \epsilon} = \int_{\overline{\Sigma}} e^{\epsilon \chi}( \langle a_1, a_2 \rangle + \langle \nabla a_1, \nabla a_2 \rangle + \cdots + \langle \nabla^k a_1, \nabla^k a_2 \rangle).$$
Note that the bilinear form on $L^{2, \epsilon}_{k}(\Omega^{i}(\overline{\Sigma}) \otimes \mathfrak{g})$ given by
$$\omega_{\overline{\Sigma}}(a_1, a_2) = \int_{\overline{\Sigma}} \Tr(a_1 \wedge a_2)$$
is non-degenerate, which could be proved using the Hodge $*$--operator.
Given $A \in \clA^0_{F}(\Sigma)$, it naturally defines a connection on $P|_{\overline{\Sigma}}$, namely extending $A$ by the product connection on the cylindrical end. Therefore, we can define the operator 
$$d_A : L^{2, \epsilon}_{k}(\Omega^{i}(\overline{\Sigma}) \otimes \mathfrak{g}) \rightarrow L^{2, \epsilon}_{k-1}(\Omega^{i+1}(\overline{\Sigma}) \otimes \mathfrak{g})$$
using the connection $A$ the the $L^2$--formal adjoint operator of $d_A$
$$d_A^* : L^{2, \epsilon}_{k}(\Omega^{i}(\overline{\Sigma}) \otimes \mathfrak{g}) \rightarrow L^{2, \epsilon}_{k-1}(\Omega^{i-1}(\overline{\Sigma}) \otimes \mathfrak{g}).$$

\begin{Lemma}
For $\epsilon > 0$ sufficiently small, the kernel of the map
$$d^*_A \oplus d_A : L^{2, \epsilon}_{k}(\Omega^{1}(\overline{\Sigma}) \otimes \mathfrak{g}) \rightarrow L^{2, \epsilon}_{k-1}(\Omega^{0}(\overline{\Sigma}) \otimes \mathfrak{g}) \oplus L^{2, \epsilon}_{k-1}(\Omega^{2}(\overline{\Sigma}) \otimes \mathfrak{g})$$
is naturally isomorphic to $H_A^1$ and this isomorphism respects the symplectic structure.
\end{Lemma}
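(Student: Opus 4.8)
I would realize the isomorphism concretely: \emph{restriction} along $\overline{\Sigma}\cong\Sigma\setminus\{p\}$ sends a twisted harmonic $1$--form on $\Sigma$ to an element of the weighted kernel on $\overline{\Sigma}$, and \emph{extension across the puncture} is the inverse. The compatibility with the symplectic forms will then be automatic: both $\omega_\Sigma$ and $\omega_{\overline{\Sigma}}$ compute the (topological) cup-product pairing on the respective twisted cohomology, so it is enough to match the cohomology classes. Thus the real content is the analytic statement that the weighted harmonic space on $\overline{\Sigma}$ is precisely the image of restriction.

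\emph{Restriction.} Since $A\in\clA^0_{F}(\Sigma)$, near $p$ the bundle and connection are trivial, so a twisted harmonic representative $a$ of a class in $H^1_A$ is near $p$ an ordinary smooth $\mathfrak{g}$--valued harmonic $1$--form. In the coordinate $z=e^{-(t+is)}$ on the cylindrical end one has $|dz|\sim e^{-t}$, so $a$ decays like $e^{-t}$ along the end; hence for all $\epsilon$ below a fixed threshold, $a|_{\overline{\Sigma}}$ lies in $L^{2,\epsilon}_{k}(\Omega^1(\overline{\Sigma})\otimes\mathfrak{g})$, and it lies in $\ker(d_A^*\oplus d_A)$ for the cylindrical metric because in real dimension $2$ the operator $d_A^*=-*d_A*$ on $1$--forms is conformally invariant and near $p$ the cylindrical metric is conformal to the original one. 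This defines a linear map $H^1_A\to\ker(d_A^*\oplus d_A)$, which is injective since a harmonic form on $\Sigma$ vanishing off the point $p$ vanishes.

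\emph{Surjectivity via asymptotics.} Conversely, let $a$ lie in the weighted kernel. Elliptic regularity makes $a$ smooth; on the end $A$ is the product connection, so each $\mathfrak{g}$--component of $a$ solves the flat Hodge system $d\alpha=0=d^*\alpha$ on $S^1\times[0,+\infty)$. Expanding in Fourier modes on $S^1$ reduces this to constant-coefficient ODEs in $t$, whose solution space is spanned by the $e^{\pm|n|t}$--modes for $n\neq0$ and by the translation-invariant forms $c_0\,dt+c_1\,ds$ (with $c_0,c_1\in\mathfrak{g}$) for $n=0$. For $\epsilon>0$ small and not a critical weight (the indicial roots lie in $\bZ$), membership in $L^{2,\epsilon}_{k}$ eliminates the $n=0$ mode and all growing modes, so $a$ decays exponentially as $t\to+\infty$. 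Hence $a|_{\Sigma'}$ is $L^2$ in a punctured neighborhood of $p$, and the removable-singularity theorem for $L^2$ twisted harmonic forms across a codimension-$2$ point extends it to a twisted harmonic $1$--form on $\Sigma$; this is an explicit inverse to restriction, the two maps being mutually inverse by continuity.

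\emph{Symplectic form and the main difficulty.} For $a_1,a_2$ harmonic on $\Sigma$ the point $p$ is a null set, so $\omega_\Sigma(a_1,a_2)=\int_\Sigma\Tr(a_1\wedge a_2)=\int_{\overline{\Sigma}}\Tr(a_1\wedge a_2)=\omega_{\overline{\Sigma}}(a_1,a_2)$, the second integral converging by the exponential decay; thus the isomorphism intertwines the two pairings. The step I expect to be the main obstacle is the asymptotic analysis: determining exactly which Fourier--exponential modes are removed by the weight $e^{\epsilon\chi}$, choosing $\epsilon$ to avoid the critical weights so that the weighted kernel coincides with the space of exponentially decaying harmonic forms (equivalently, Hodge theory on the cylindrical-end surface $\overline{\Sigma}$), together with making the removable-singularity argument precise in the twisted $L^2_{k}$ setting. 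By contrast, the conformal invariance of the $1$--form Hodge system in dimension $2$ and the measure-zero identity for the symplectic forms are routine.
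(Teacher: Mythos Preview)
Your argument is correct, but it follows a genuinely different route from the paper's. The paper defines the map in the \emph{opposite} direction---from $\ker(d_A^*\oplus d_A)$ on $\overline{\Sigma}$ to $H^1_A$---via the natural inclusion $L^{2,\epsilon}_k(\Omega^1(\overline{\Sigma})\otimes\mathfrak{g})\hookrightarrow L^2_k(\Omega^1(\Sigma))$, then identifies this with the cohomological map $H^1_A(\Sigma',\partial\Sigma')\cong H^1_c(\Sigma',d_A)\to H^1_A$ and concludes by a dimension count; the symplectic compatibility is asserted as obvious. In other words, the paper relies implicitly on the standard identification of weighted $L^2$--harmonic forms on a cylindrical-end manifold with compactly supported cohomology, and then invokes topology.

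Your approach is more analytic and self-contained: you build the map by restriction (using conformal invariance of the kernel of $d_A\oplus d_A^*$ on $1$--forms in real dimension $2$), and you prove surjectivity by an explicit Fourier expansion on the end together with a removable-singularity step. This has the virtue of making transparent \emph{why} the weight $\epsilon>0$ is needed (to exclude the translation-invariant $n=0$ modes) and why it must be small (to retain the $n\le -1$ decaying modes), and it avoids citing the weighted-Hodge package. The paper's route is terser and situates the statement in its natural cohomological context, but at the cost of invoking results the student may not have at hand. One small correction: $d_A^*$ on $1$--forms is not literally conformally invariant (it scales by a positive function), but its \emph{kernel} is, which is all you use.
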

\begin{proof}
The map $\ker(d_A \oplus d_A^*) \rightarrow H_A^1$ is given by the natural inclusion $L^{2, \epsilon}_{k}(\Omega^{1}(\overline{\Sigma}) \otimes \mathfrak{g}) \rightarrow L^{2}_{k}(\Omega^1(\Sigma))$, after identifying $\overline{\Sigma}$ with $\Sigma \backslash \{p\}$. This map is the same as the composition $H^{1}_{A}(\Sigma', \partial \Sigma') \cong H^{1}_{c}(\Sigma', d_A)  \rightarrow H^1_A$, which is an isomorphism by counting dimensions. This map obviously respects the symplectic structure.
\end{proof}

\begin{Lemma}
There exists a map $P_0: \ker(d_A \oplus d_A^*) \rightarrow \tilde{H}_A^{1, \mathfrak{g}}$ which defines a linear symplectic embedding. The symplectic orthogonal complement of $\ima(P_0)$ is isomorphic to $(\mathfrak{g}/ H_A^0) \oplus (\mathfrak{g}/ H_A^0)^*$, where $H_A^0$ is identified with the Lie algebra of the stabilizer group of $A \in \clA_{\Sigma}$. Moreover, the induced symplectic structure on $(\mathfrak{g}/ H_A^0) \oplus (\mathfrak{g}/ H_A^0)^*$ from $\omega_{\Sigma'}$ is the same as the canonical linear symplectic form on $T^*(\mathfrak{g}/ H_A^0) \cong (\mathfrak{g}/ H_A^0) \oplus (\mathfrak{g}/ H_A^0)^*$.
\end{Lemma}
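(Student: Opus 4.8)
The plan is to realise $\tilde H_A^{1,\mathfrak g}=T_{[A]}\hat{\mathcal M}^{\mathfrak g}(\Sigma')$ as a symplectic vector space containing the isotropic subspace $O:=\mathfrak g\cdot[A]$ spanned by the infinitesimal $G$--action, to identify $\ker(d_A\oplus d_A^*)$ with a complement of $O$ inside its symplectic orthogonal $O^{\omega}$, and then to read off the stated splitting by elementary symplectic linear algebra. For the first part I use the moment map picture of Proposition \ref{prop_extended}: since $\mu([A])=0$ and $\mu$ is $G$--equivariant one has $O\subseteq\ker d\mu_{[A]}$, and the defining relation $\langle d\mu_{[A]}(v),\zeta\rangle=\omega_{\Sigma'}(\zeta\cdot[A],v)$ shows $\ker d\mu_{[A]}=O^{\omega}$, so $O$ is isotropic; one also sees this directly, since an element of $O$ is represented by $d_A\sigma$ with $\sigma$ constant near $\partial\Sigma'$ and then $\omega_{\Sigma'}(d_A\sigma_1,d_A\sigma_2)=\int_{\Sigma'}d\Tr(\sigma_1\,d_A\sigma_2)=\int_{\partial\Sigma'}\Tr(\sigma_1\,d_A\sigma_2)=0$. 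By Proposition \ref{prop_extended}(4) the kernel of $\mathfrak g\to O$ is $H_A^0$, so $O\cong\mathfrak g/H_A^0$, and by Proposition \ref{prop_extended}(3) together with symplectic reduction $O^{\omega}/O\cong T_{[A]}\mathcal M(\Sigma)\cong H_A^1$ as symplectic vector spaces.

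To construct $P_0$, take $\alpha\in\ker(d_A\oplus d_A^*)\subset L^{2,\epsilon}_k(\Omega^1(\overline\Sigma)\otimes\mathfrak g)$. The period $\int_{S^1\times\{t\}}\alpha$ is independent of $t$ and decays as $t\to\infty$, hence vanishes, so on a collar $S^1\times[0,\delta)$ of $\partial\Sigma'$, where $A$ is trivial, $\alpha$ has a primitive $\phi$, which I normalise by $\int_{\partial\Sigma'}\phi\,ds=0$. Picking a cutoff $\chi$ equal to $1$ near $\partial\Sigma'$ and supported in the collar, set $P_0(\alpha):=[\alpha-d_A(\chi\phi)]\in\tilde H_A^{1,\mathfrak g}$; the representative vanishes near $\partial\Sigma'$, so it lies in $\Omega^{1,\mathfrak g}(\Sigma')$ with zero boundary term, and it is $d_A$--closed because $F_A=0$. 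A change of cutoff alters it by $d_A$ of a compactly supported section, so $P_0$ is well defined and linear, and since the boundary term is zero we get $d\mu_{[A]}\circ P_0=0$, that is $\ima P_0\subseteq O^{\omega}$.

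The key point is that the composite $\ker(d_A\oplus d_A^*)\xrightarrow{P_0}O^{\omega}\twoheadrightarrow O^{\omega}/O\cong H_A^1$ coincides with the isomorphism $\ker(d_A\oplus d_A^*)\cong H_A^1$ of the previous lemma. Unwinding the identifications, $P_0(\alpha)$ is the deformation of flat connections on $\Sigma'$ given by $\alpha-d_A(\chi\phi)$; extended by zero across $D^2_p$ it is a deformation of the flat connection $A$ on $\Sigma$, and its class in $H_A^1(\Sigma)$ differs from the class of $\alpha$ only by a form supported near $p$ that is $d_A$--exact with primitive extending smoothly over $p$ — here one uses that the decaying harmonic form $\alpha$ is $d_A$--exact on a punctured disc around $p$, with a primitive that extends smoothly over $p$. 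Granting this, the composite is the standard isomorphism, hence $P_0$ is injective and $\ima P_0\cap O=0$.

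It remains to assemble the splitting. Set $W:=\ima P_0$. By the previous paragraph $O^{\omega}=O\oplus W$, so $\omega_{\Sigma'}|_W$ is nondegenerate (it corresponds to the reduced form on $O^{\omega}/O$); thus $P_0$ is a linear symplectic embedding and $\tilde H_A^{1,\mathfrak g}=W\oplus W^{\omega}$. Since $W\subseteq O^{\omega}$ we have $O\subseteq W^{\omega}$, and $\dim O^{\omega}=\dim O+\dim H_A^1=\dim\tilde H_A^{1,\mathfrak g}-\dim O$ forces $\dim W^{\omega}=2\dim O$; as $O$ is isotropic this makes $O\cong\mathfrak g/H_A^0$ a Lagrangian subspace of the symplectic space $W^{\omega}$, whence $W^{\omega}\cong T^*(\mathfrak g/H_A^0)\cong(\mathfrak g/H_A^0)\oplus(\mathfrak g/H_A^0)^*$ with its canonical symplectic form, which is the assertion. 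The main obstacle is the compatibility step above: it requires carrying a cohomology class through the gauge--theoretic, representation--theoretic and harmonic descriptions and controlling the contribution of the puncture, whereas the symplectic linear algebra and the unambiguity of the definition of $P_0$ are routine.
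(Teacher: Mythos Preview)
Your argument is correct and gives a more self-contained route than the paper, which simply cites Jeffrey's Proposition~3.9 and specialises the family $P_r$ of symplectic embeddings at $r=0$. Rather than invoking that construction wholesale, you build $P_0$ directly by cutting off a decaying harmonic form near $\partial\Sigma'$, and then you extract the splitting from the Hamiltonian $G$--action: the orbit directions $O\cong\mathfrak g/H_A^0$ form an isotropic subspace of $\tilde H_A^{1,\mathfrak g}$ with $O^{\omega}=\ker d\mu_{[A]}$, symplectic reduction identifies $O^{\omega}/O$ with $H_A^1$, and once the composite $\ker(d_A\oplus d_A^*)\xrightarrow{P_0}O^{\omega}\to O^{\omega}/O$ is seen to agree with the isomorphism of the previous lemma, the rest is elementary symplectic linear algebra on a finite-dimensional vector space. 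This makes transparent why $W^{\omega}\cong T^*(\mathfrak g/H_A^0)$: it is forced by $O$ being Lagrangian in $W^{\omega}$.

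Your compatibility step can be phrased a bit more cleanly than you do, and this removes the need to worry about whether a primitive extends over $p$. The representative $\alpha-d_A(\chi\phi)$ is compactly supported in $\Sigma'$ and cohomologous to $\alpha$ in $H^1(\overline\Sigma,d_A)$; extending by zero across $D^2_p$ is exactly the excision isomorphism $H^1_A(\Sigma',\partial\Sigma')\cong H^1_A(\Sigma)$ used in the previous lemma, so the composite with $O^{\omega}/O\to H^1_A$ is that lemma's map on the nose. The advantage of the paper's citation is that Jeffrey's family $P_r$ carries additional structure (and the equivariance statements are already packaged there); the advantage of your approach is that it isolates precisely the moment-map geometry that drives Corollary~\ref{cor_slice}, without importing the full apparatus of \cite{jeffrey1994extended}.
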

\begin{proof}
The proof is essentially the same as the proof of \cite[Proposition 3.9]{jeffrey1994extended}. In \emph{loc.cit}, Jeffrey constructed a family of linear symplectic embeddings $P_r :\ker(d_A \oplus d_A^*) \rightarrow \tilde{H}_A^{1, \mathfrak{g}}$ parametrized by $r \in [0, +\infty)$ and the $P_0$ here is the specialization at $r=0$. Although \cite[Proposition 3.9]{jeffrey1994extended} was stated for $G = \SU(2)$, the same arguments work for general Lie groups. 
\end{proof}

Using these two lemmas, and note that all the isomorphisms commute with the $\Stab(A)$--action, we conclude
\begin{Corollary}
\label{cor_slice}
Suppose $[A] \in \hat{\mathcal{M}}^{\mathfrak{g}}(\Sigma')$ is represented by some $A \in \clA_F^0(\Sigma)$, then there is a $\Stab(A)$--equivariant symplectic isomorphism
\begin{equation}
\label{eqn_harmonic}
T_{[[A]]} \hat{\mathcal{M}}^{\mathfrak{g}}(\Sigma') \cong H_A^1 \oplus T^*(\mathfrak{g}/ H_A^0),
\end{equation} 
where $T^*(\mathfrak{g}/ H_A^0)$ is equipped with the canonical symplectic form and trivial $\Stab(A)$--action. \qed
\end{Corollary}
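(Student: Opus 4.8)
The plan is to read off the isomorphism \eqref{eqn_harmonic} by assembling the identification $T_{[A]}\hat{\mathcal{M}}^{\frg}(\Sigma')\cong\tilde{H}_{A}^{1,\frg}$ with the two lemmas proved just above, keeping careful track of the residual gauge action at each stage. Nothing genuinely new has to be constructed: the content is entirely in checking that every intermediate isomorphism is $\Stab(A)$--equivariant and that the symplectic forms line up, after which the decomposition is forced by elementary linear symplectic algebra.

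First I would record that, because $A\in\clA_F^0(\Sigma)$, the point $[A]$ is a smooth point of $\hat{\mathcal{M}}^{\frg}(\Sigma')$ and its tangent space is the first cohomology $\tilde{H}_{A}^{1,\frg}$ of the gauge-fixed deformation complex \eqref{eqn_def_cpx}, with the symplectic form induced by $\omega_{\Sigma'}$. This identification is $\Stab(A)$--equivariant: the stabilizer $\Stab(A)\subset\clG_{\Sigma'}$ acts on $\Omega^{\bullet}(\Sigma')\otimes\frg$ through $\Ad$ on the coefficients, which preserves each term of \eqref{eqn_def_cpx}, commutes with $d_A$, and hence descends to cohomology, while $\omega_{\Sigma'}$ is manifestly $\Ad$--invariant. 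This step is formal.

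Next I would feed in the two lemmas. The first provides a $\Stab(A)$--equivariant linear symplectomorphism $\ker(d_A\oplus d_A^{*})\cong H_A^1$ on the cylindrical-end model $\overline{\Sigma}$, the equivariance being clear since the inclusion used there is $\Ad$--natural. The second provides the $\Stab(A)$--equivariant linear symplectic embedding $P_0\colon\ker(d_A\oplus d_A^{*})\hookrightarrow\tilde{H}_{A}^{1,\frg}$ whose $\omega_{\Sigma'}$--orthogonal complement carries the canonical symplectic structure of $T^{*}(\frg/H_A^0)$. Since $P_0$ is a symplectic embedding, $\ima(P_0)$ is a symplectic subspace of $\tilde{H}_{A}^{1,\frg}$, so $\tilde{H}_{A}^{1,\frg}=\ima(P_0)\oplus\ima(P_0)^{\perp_{\omega_{\Sigma'}}}$ as symplectic vector spaces; and because $\ima(P_0)$ is $\Stab(A)$--invariant (equivariance of $P_0$) and $\Stab(A)$ acts by symplectomorphisms, the $\omega_{\Sigma'}$--complement is $\Stab(A)$--invariant too, so this is a splitting of $\Stab(A)$--representations. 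Substituting $\ima(P_0)\cong H_A^1$ and $\ima(P_0)^{\perp_{\omega_{\Sigma'}}}\cong T^{*}(\frg/H_A^0)$ then yields \eqref{eqn_harmonic}, with all maps symplectic and $\Stab(A)$--equivariant by construction.

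The only part that is not pure bookkeeping — and the expected main obstacle — is the $\Stab(A)$--equivariance of Jeffrey's embedding $P_0$. In \cite{jeffrey1994extended} the family $P_r$ is written out explicitly only for $G=\SU(2)$ and with no mention of equivariance, so I would rewrite the formula for an arbitrary simply-connected compact $G$ and verify line by line that each ingredient (the splitting of an element of $\Omega^{1,\frg}(\Sigma')$ into an $L^{2,\epsilon}$ part on $\overline{\Sigma}$ plus a boundary term $\xi\,ds$, the $s$--dependent cut-off functions, and the operators $d_A$ and $d_A^{*}$) is built solely from $\Ad$--equivariant data, so that conjugating the input and output of $P_0$ by an element of $\Stab(A)$ leaves $P_0$ unchanged. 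Everything else, in particular that a symplectic subspace splits off $\omega_{\Sigma'}$--orthogonally in a way compatible with any group of symplectomorphisms preserving it, is routine.
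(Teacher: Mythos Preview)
Your proposal is correct and follows essentially the same route as the paper: the paper's proof is the single sentence ``Using these two lemmas, and note that all the isomorphisms commute with the $\Stab(A)$--action, we conclude,'' and you have simply unpacked that sentence, making explicit the symplectic-orthogonal splitting $\tilde{H}_A^{1,\frg}=\ima(P_0)\oplus\ima(P_0)^{\perp_{\omega_{\Sigma'}}}$ and the equivariance check for $P_0$. The one assertion in the statement you do not address is that the $\Stab(A)$--action on the $T^*(\frg/H_A^0)$ summand is \emph{trivial}; the paper does not justify this either, so your writeup is no less complete than the original.
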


\begin{remark}
The above corollary could be viewed as a consequence of the symplectic slice theorem which appeared in the proof of \cite[Theorem 2.1]{sjamaar1991stratified}. However, we cannot apply the slice theorem directly because we need to identify the tangent bundle of the normal slice explicitly as twisted harmonic $1$--forms. This will be important in our calculations of spectral flows later.
\end{remark}

Now suppose $Y'$ is a smooth, oriented, $3$--manifold with boundary $\partial Y' = \Sigma$. Choose the base point $p \in \Sigma$ as above then $p$ also define base points in both $Y'$. Assume that $\pi_1(\Sigma) \rightarrow \pi_1(Y')$ induced by the inclusion defines a surjective map. Denote by $P$ the trivial $G$--bundle on both $Y'$ and $\Sigma$. Let $\clA_{F}^{\mathfrak{g}}(\Sigma'|Y') \subset \clA_{F}^{\mathfrak{g}}(\Sigma')$ be the space consisting of elements in $\clA_{F}^{\mathfrak{g}}(\Sigma')$ which extend to flat connection on $P$ over $Y'$. Such space is preserved by the action of $\clG_{\Sigma'}^c$. Furthermore, let $\clA_{F}(Y')$ be the space of flat connections on $P|_{Y'}$ and let $\clG_0(Y')$ be the based gauge group consisting of gauge transformations $g \in \clG_{Y'}$ such that $g(p) = \id$. Then the map
$$\clA_{F}(Y') / \clG_0(Y') \rightarrow \clA_{F}^{\mathfrak{g}}(\Sigma'|Y') / \clG_{\Sigma'}^c$$
is a diffeomorphism. By \cite[Lemma 5.1]{manolescu2012floer}, the embedding
$$\clA_{F}(Y') / \clG_0(Y') \cong \clA_{F}^{\mathfrak{g}}(\Sigma'|Y') / \clG_{\Sigma'}^c \hookrightarrow\clA_{F}^{\mathfrak{g}}(\Sigma') / \clG_{\Sigma'}^c = \mathcal{M}^{\mathfrak{g}}(\Sigma')$$
defines a Lagrangian submanifold  $L_{Y'}$ in $\mathcal{M}^{\mathfrak{g}}(\Sigma')$ with respect to the symplectic form in Proposition \ref{prop_extended}. The manifold $L_{Y'}$ diffeomorphic to $G^{h}$ and it is preserved by the $G$--action on $\mathcal{M}^{\mathfrak{g}}(\Sigma')$. Additionally, this Lagrangian is contained in the $0$--level set of the moment map $\mu^{-1}(0)$. As a result, $L_{Y'}$ is contained in the smooth locus $\hat{\mathcal{M}}^{\mathfrak{g}}(\Sigma')$.

One could obtain many $G$--invariant Lagrangians in $\hat{\mathcal{M}}^{\mathfrak{g}}(\Sigma')$ by the construction above. We will study the equivariant intersection theory of a pair of such Lagrangians and define some version of equivariant intersection number which calculates the generalized Casson invariants. 

\section{Equivariant transversality of Lagrangian submanifolds}
\label{sec_Lag_int}

This section develops an equivariant transversality result for Lagrangian submanifolds inside symplectic manifolds equipped with Lie group actions, and defines the notion \emph{equivariant Maslov index} which will be used in the calculation of the generalized Casson invariants.

Suppose $(M, \omega)$ is a (not necessarily closed) smooth symplectic manifold of dimension $2n$, where $\omega$ is the symplectic form . Let $G$ be a connected, compact Lie group that acts on $(M,\omega)$ such that $\forall g \in G$, $g^{*} \omega = \omega$. Let $L_1, L_2$ be two oriented Lagrangian submanifolds in $M$ which are invariant under the $G$--action. 

For $p\in M$, we use $\Orb(p)$ to denote the orbit of $p$ under the action of $G$. Note that $\Orb(p)$ could be identified with $G / \Stab(p)$, where $\Stab(p)$ is the stabilizer group of $p$ under the $G$--action. In particular, $\Orb(p)$ is a smooth manifold. A subset $X$ of $M$ is called \emph{$G$--invariant} if $G(X)=X$; and similarly, a subbundle $E$ of $TM|_X$ where $X$ is $G$--invariant is called \emph{$G$--invariant} if $G(E)=E$.

\begin{Definition}
\label{def_Lag_non-deg}
	We say that $L_1$ and $L_2$ intersect \emph{non-degenerately} at p, if for $p\in L_1\cap L_2$, we have $T_pL_1\cap T_pL_2=T_p\Orb(p)$. $L_1$ and $L_2$ are said to intersect \emph{non-degenerately} if they intersect \emph{non-degenerately} at each of their intersection point.
\end{Definition}

\begin{remark}
Using the terminology in \cite{pozniak1999floer}, the above definition is equivalent to saying that $L_1$ and $L_2$ \emph{have clean intersection along $\Orb(p)$ for each $p \in L_1 \cap L_2$}. Note that if $L_1$ and $L_2$ are compact and intersect non-degenerately, they only intersect along finitely many $G$--orbits.
\end{remark}

\subsection{Transversality}

Suppose $(M, \omega), L_1, L_2$ are as above, we prove that $L_1$ and $L_2$ intersect non-degenerately after applying a generic $G$--equivariant Hamiltonian perturbation on $L_1$. We also characterize the failure of equivariant transversality when varying the Hamiltonian perturbations in generic $1$--paramter families.

We start by fixing some notations from symplectic geometry. Let $C^{\infty}_{G}(M)$ be the space of compactly supported smooth $G$--invariant time-independent Hamiltonians on the symplectic manifold $M$ equipped with the $C^{\infty}$--topology.

\begin{Definition}
Let $C_0^\infty \Big([0,1], C_G^\infty(M)\Big)$ be the space of time-dependent smooth $G$--invariant Hamiltonians which vanish near $s=0$ and $s=1$, where $s$ is the coordinate on $[0,1]$.
Take $\{H_{s}^{0}, H_{s}^{1}, \cdots \}$ which form a countable dense subset of $C_0^\infty \Big([0,1], C_G^\infty(M)\Big)$ and let 
$$N_m := \sup \{\|H_{s}^{0}\|_{C^m},\cdots,\|H_{s}^{m}\|_{C^m}\}.$$ 
Define $\clH$ to be the completion of the subspace of $C_0^\infty \Big([0,1], C_G^\infty(M)\Big)$ consisting of functions of form
$$\sum_{m\ge 0} a_m H_{s}^{m}$$
such that $\sum_{m\ge 0} N_m|a_m|<+\infty$ under the norm $\| \sum_{m\ge 0} a_m H_{s}^{m} \| := \sum_{m\ge 0} N_m|a_m|$.
\end{Definition}

Given any $H_{s} \in C_0^\infty \Big([0,1], C_G^\infty(M)\Big)$, the Hamiltonian vector field $X_{H_{s}}$ of $H_{s}$ is determined by
$$dH_{s} = \omega(\cdot, X_{H_{s}}).$$
Let $\Phi_{H_{s}}$ be the time $1$--flow generated by integrating $X_{H_{s}}$. Then for any $G$--invariant Lagrangian $L \subset M$, $\Phi_{H_{s}}(L)$ is also a $G$--invariant Lagrangian.

Recall that for every manifold $S$, there is a canonical symplectic form $d \lambda$ on $T^*S$ (see, for example, \cite[p. 90]{mcduff2017introduction}). When $S$ admits a smooth $G$--action, $d \lambda$ is preserved by the induced $G$--action on $T^*S$.
We need the following equivariant local version of the Lagrangian neighborhood theorem:

\begin{Lemma}
\label{lem_Lag_nbhd}
Suppose $L$ is a $G$--invariant Lagrangian submanifold of $(M, \omega)$, and suppose $\Orb(p) \subset L$.
Let $E \subset TM|_{\Orb(p)}$ be a $G$--invariant Lagrangian subbundle that is transverse to $TL|_{\Orb(p)}$. Then there exists a $G$--invariant open subset $U\subset M$ with the following property: if we write $S=U\cap L$, then there is a $G$--equivariant symplectomorphism $\varphi$ from an open neighborhood of the zero section of $T^*S$ to $U$, such that 
\begin{enumerate}
	\item $\varphi$ equals the identity map on $S$,
	\item the tangent map of $\varphi$ on $S$ sends the fibers of $T^*S$ to the fibers of $E$ on $\Orb(p)$.
\end{enumerate}
\end{Lemma}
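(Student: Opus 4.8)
The plan is to reduce this equivariant statement to the standard (non-equivariant) Lagrangian neighborhood theorem by first treating the orbit direction separately, using the compactness of $G$ to average all auxiliary choices into $G$--invariant ones. First I would recall the non-equivariant Weinstein Lagrangian neighborhood theorem: any Lagrangian submanifold has a neighborhood symplectomorphic to a neighborhood of the zero section in its cotangent bundle, and the refined version (Weinstein, see \cite[Chapter 3]{mcduff2017introduction}) allows one to prescribe the symplectic splitting $TM|_L \cong TL \oplus E$ along $L$ for any chosen Lagrangian complement $E$. The key point is to run the proof of this theorem $G$--equivariantly. Concretely, since $E$ is a $G$--invariant Lagrangian complement of $TL|_{\Orb(p)}$, one extends $E$ over all of a $G$--invariant neighborhood $S$ of $\Orb(p)$ in $L$: choose any smooth Lagrangian complement of $TL$ near $\Orb(p)$ and then average it over $G$ using Haar measure — the space of Lagrangian complements to a fixed Lagrangian in a symplectic vector space is convex (it is an affine space modeled on symmetric bilinear forms on $TL$), so the $G$--average is again a field of Lagrangian complements and it agrees with the given $E$ along $\Orb(p)$. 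This produces a $G$--invariant Lagrangian subbundle $E \to S$ restricting to the given $E$ on $\Orb(p)$.

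Next I would build the map $\varphi$. Using the splitting $TM|_S \cong TS \oplus E$, there is a canonical bundle isomorphism $E \cong T^*S$ given by $v \mapsto \omega(v, \cdot)|_{TS}$, which is $G$--equivariant because $\omega$ and the splitting are $G$--invariant. Choose a $G$--invariant almost complex structure $J$ on $M$ compatible with $\omega$ (again obtainable by averaging, since compatible almost complex structures form a contractible, in fact convex in an appropriate sense, space on which $G$ acts) and let $\exp$ be the exponential map of the associated $G$--invariant Riemannian metric. Define $\tilde\varphi$ on a neighborhood of the zero section of $E$ by geodesic exponentiation, $\tilde\varphi(x, v) = \exp_x(v)$; this is $G$--equivariant, equals the identity on $S$, and its differential along $S$ sends $E$--fibers to $E$--fibers. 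Composing with the isomorphism $T^*S \cong E$ gives a $G$--equivariant map $\psi$ from a neighborhood of the zero section of $T^*S$ into $M$ which equals the identity on $S$, whose differential along $S$ carries cotangent fibers to the fibers of $E$, and which pulls the canonical symplectic form $d\lambda$ back to a symplectic form $\omega_1 := \psi^*\omega$ agreeing with $d\lambda$ to first order along $S$ (both restrict correctly on $TS$ and on the fibers, by the choice of the isomorphism $E \cong T^*S$).

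Finally I would apply an equivariant Moser argument to interpolate between $d\lambda$ and $\omega_1$ on a neighborhood of the zero section $S \subset T^*S$. Set $\omega_t = (1-t)\, d\lambda + t\,\omega_1$; since $\omega_1$ and $d\lambda$ agree along $S$, each $\omega_t$ is symplectic in a (possibly smaller) neighborhood of $S$, and each is $G$--invariant. Because the difference $\omega_1 - d\lambda$ is closed and vanishes along $S$ to first order, a $G$--invariant relative Poincaré lemma (average the standard homotopy operator over $G$, or note that the deformation retraction of a tubular neighborhood onto $S$ can be chosen $G$--equivariantly) produces a $G$--invariant $1$--form $\sigma$ with $d\sigma = \omega_1 - d\lambda$ and $\sigma$ vanishing along $S$ together with its first derivatives in the normal directions; solving $\iota_{X_t}\omega_t = -\sigma$ gives a $G$--equivariant time-dependent vector field whose flow $\rho_t$ fixes $S$ pointwise, is defined near $S$ for $t \in [0,1]$ after shrinking, and satisfies $\rho_1^*\omega_1 = d\lambda$. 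Moreover $\rho_1$ is tangent to the identity along $S$ (since $X_t$ vanishes to first order there), so it does not disturb condition (2). Then $\varphi := \psi \circ \rho_1^{-1}$, restricted to a sufficiently small $G$--invariant neighborhood $U$ of $\Orb(p)$, is the desired $G$--equivariant symplectomorphism; shrinking $U$ to be $G$--invariant is automatic since we may take the preimage of a $G$--invariant neighborhood, $G$ being compact.

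The main obstacle I anticipate is bookkeeping at the orbit: one must ensure that the neighborhood $S$ of $\Orb(p)$ in $L$ and all the averaging constructions (the Lagrangian complement field $E$, the compatible $J$, the $1$--form $\sigma$) can be taken simultaneously $G$--invariant \emph{and} compatible with the prescribed data along $\Orb(p)$ itself, rather than merely along a single point; this is where convexity of the relevant spaces of choices and the fact that the given $E$ is already $G$--invariant do the real work, and where I would be most careful in writing the details. The Moser step and the relative Poincaré lemma are standard once everything in sight is $G$--invariant, since averaging over a compact group commutes with $d$ and preserves the vanishing-order conditions along $S$.
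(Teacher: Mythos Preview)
Your proposal is correct and follows essentially the same route as the paper: identify $E \cong T^*L|_{\Orb(p)}$ via $v \mapsto \omega(v,\cdot)$, pick a $G$--invariant compatible almost complex structure, use the exponential map of the associated metric to build a candidate diffeomorphism, and then run the (equivariant) Moser argument of \cite[Theorem 3.4.13]{mcduff2017introduction}. The only cosmetic difference is that the paper arranges condition~(2) by choosing $J$ along $\Orb(p)$ so that $J(TL|_{\Orb(p)}) = E$ and then extending $J$ equivariantly, whereas you first extend $E$ itself to a $G$--invariant Lagrangian complement by averaging and pick $J$ independently; both achieve the same control on $d\varphi$ along the zero section.
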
 

\begin{proof}
Because $E$ and $TL|_{\Orb(p)}$ are transverse to each other fiberwisely as Lagrangian subbundles of $TM|_{\Orb(p)}$, the map $\beta_{E}: E \rightarrow T^{*}L|_{\Orb(p)}$ given by $v \mapsto \omega(v, \cdot)$ is an isomorphism. Using the identification
$$TM|_{\Orb(p)} \cong E \oplus TL|_{\Orb(p)} \cong T^{*}L|_{\Orb(p)} \oplus TL|_{\Orb(p)},$$
we can construct a fiberwise $G$--invariant almost complex structure $J_p$ on $TM|_{\Orb(p)}$ which is compatible with the fiberwise symplectic form on $TM|_{\Orb(p)}$ such that $J_p(TL|_{\Orb(p)}) = E$. 

Extend $J_p$ to a $G$--equivariant compatible almost complex structure $J$ on $M$ and let $g_{J} = \omega(\cdot, J\cdot)$ be the compatible Riemannian metric.  After choosing sufficiently small $G$--invariant neighborhood of $\Orb(p) \in T^{*}L$ so that exponential map of $g_{J}$ is well defined, the argument of \cite[Theorem 3.4.13]{mcduff2017introduction} carries over. Our special choice of the almost complex structure guarantees the second condition holds.
\end{proof}

\begin{Lemma}
\label{lem_exact}
Let $S$ be a $G$--manifold and suppose that $U$ is a sufficiently small $G$--invariant neighborhood of $\Orb(p) \subset S$. Then any $G$--invariant Lagrangian $L \subset T^{*} S$ which is transverse to the Lagrangian foliation on $T^{*} U$ induced by cotangent fibers is locally presented as the graph of an exact $G$-equivariant $1$--form over $U$.
\end{Lemma}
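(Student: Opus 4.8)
The plan is to split the claim into three routine reductions --- $L$ is a section, that section is a closed $1$-form, and the $1$-form is $G$-invariant --- and one step with genuine content, namely upgrading closedness to exactness with a $G$-invariant primitive. First I would use the transversality hypothesis. Since $L$ and the base $S$ both have dimension $n=\tfrac12\dim T^{*}S$ and $L$ is transverse to the Lagrangian cotangent fibers, the projection $\pi\colon T^{*}S\to S$ restricts to a local diffeomorphism along $L$. Because $\pi|_{L}$ is $G$-equivariant and $\Orb(p)$ is compact, after shrinking $U$ inside a $G$-invariant tubular neighborhood of $\Orb(p)$ the part of $L$ lying over $U$ is, locally near $\Orb(p)$, a disjoint union of embedded sections of $T^{*}U\to U$; fix one such sheet, which is the graph of a unique $1$-form $\alpha\in\Omega^{1}(U)$. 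With $\lambda$ the tautological $1$-form and $\omega=d\lambda$, the identity $\alpha^{*}\lambda=\alpha$ gives $\alpha^{*}\omega=d\alpha$, so $\alpha$ is closed because the graph is Lagrangian. For $G$-invariance: the lifted $G$-action on $T^{*}U$ preserves $\lambda$ and hence sends the graph of $\alpha$ to the graph of $(g^{-1})^{*}\alpha$; since $L$ is $G$-invariant and $G$ is connected, $G$ fixes every connected component of $L\cap\pi^{-1}(U)$, in particular our sheet, so $(g^{-1})^{*}\alpha=\alpha$ for all $g$.

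Next I would prove exactness. By the equivariant tubular neighborhood theorem $U$ is $G$-equivariantly diffeomorphic to a $G$-vector bundle over $\Orb(p)\cong G/\Stab(p)$, hence $G$-equivariantly deformation retracts onto $\Orb(p)$; restriction therefore induces an isomorphism $H^{1}(U;\bR)\cong H^{1}(\Orb(p);\bR)$. For $G=\SU(n)$, and more generally whenever $G$ is compact semisimple, the homotopy exact sequence of $\Stab(p)\to G\to G/\Stab(p)$ forces $\pi_{1}(\Orb(p))$ to be finite, so $H^{1}(\Orb(p);\bR)=0$, whence $[\alpha]=0$ and $\alpha=df$ for some $f\in C^{\infty}(U)$. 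Averaging, $\tilde f(x):=\int_{G}f(g\cdot x)\,dg$ is $G$-invariant, defined on all of $U$ because $U$ is $G$-invariant, and satisfies $d\tilde f=\int_{G}g^{*}(df)\,dg=\int_{G}g^{*}\alpha\,dg=\alpha$, using that $\alpha$ is already $G$-invariant. Thus the chosen sheet of $L$ is the graph over $U$ of the exact $G$-invariant $1$-form $d\tilde f$, as claimed.

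The one step with real content is the last one, and I expect it to be the main obstacle: the hypothesis ``$G$-invariant'' alone does not force $[\alpha]=0$ for an arbitrary connected compact $G$ (a rotation-invariant Lagrangian in $T^{*}S^{1}$ is the graph of $d\theta$), so the vanishing of $H^{1}(\Orb(p);\bR)$ is doing essential work. For groups with a torus factor one would instead invoke that in the situation where this lemma is applied --- via Lemma~\ref{lem_Lag_nbhd}, with one of the Lagrangians turned into the zero section --- the relevant sheet meets the zero section along $\Orb(p)$, so $\alpha$ vanishes there and $[\alpha]=0$ unconditionally, after which the averaging argument goes through verbatim. The rest is bookkeeping: one only has to check that the shrinking of $U$ and the selection of a sheet can be carried out $G$-equivariantly, which is immediate from connectedness of $G$ and compactness of the orbit.
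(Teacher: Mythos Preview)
Your proof is correct and follows the same outline as the paper's: graph of a $1$-form, closed by the Lagrangian condition, then exact with a $G$-invariant primitive via the slice model $U \cong G \times_{\Stab(p)} U'$. The paper compresses the exactness step to ``the quotient of $U$ by $G$ is contractible, so the equivariant form is exact in the $G$-invariant de Rham complex,'' whereas you spell out $H^{1}(U;\bR)\cong H^{1}(\Orb(p);\bR)=0$ (using that $G$ is simply-connected, an assumption made throughout the paper) and then average a primitive. These are the same argument in substance; your version is more explicit about where the hypothesis on $G$ enters, and your remark on the torus case---and the fallback via the zero-section intersection in the application to Lemma~\ref{lem_Lag_nbhd}---is a correct and useful observation that the paper does not make.
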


\begin{proof}
Because $L$ is transverse to the Lagrangian foliation on $T^{*} U$ induced by cotangent fibers, it is locally represented by the graph of some $1$--form $\theta_L$ over $U$. The Lagrangian condition implies that $\theta_L$ is closed. 

Using the slice theorem, without loss of generality we can assume that $U = G \times_{\Stab(p)} U^{\prime}$ where $U^{\prime}$ is a contractile $G$--invariant neighborhood of the origin in $T_p S / T_p \Orb(p)$. Because the quotient of $U$ by $G$ is contractible, the cohomology class represented by the equivariant form $\theta_L$ is 0. Using the de Rham complex of $G$--equivariant differential forms on $U$, we conclude that $\theta_L$ is exact.
\end{proof}

Now we are ready to establish the equivariant transversality result. Let $A\subset M$ be a compact subset such that $L_1\cap L_2$ is included in the interior of $A$.  Let $\clH_0\subset \clH$ be a sufficiently small open ball centered at zero, such that for every $H_{s} \in \clH_0$, the map $\Phi_{H_{s}}$ is defined on $A$, and $\Phi_{H_{s}}(L_1)\cap L_2$ is included in the interior of $A$.

Suppose $\sigma\in \clR_G$. Define $\mathcal{M}_\sigma\subset \clH_0\times (A\cap L_2)$ to be the set of points $(H_{s},p)$ such that 
\begin{enumerate}
	\item $p\in \Phi_{H_{s}}(L_1)\cap L_2$,
	\item The linear space $\big(T_p  \Phi_{H_{s}}(L_1) \cap T_p(L_2)\big) / T_p\Orb(p)$ as a $\Stab(p)$--representation represents $\sigma$.
\end{enumerate}

\begin{Proposition}
\label{prop_universal_moduli_space}
	The space $\clM_\sigma$ is a Banach manifold. The projection of $M_\sigma$ to $\clH_0$, written as $\pi_{\sigma}$, is Fredholm with index $-d(\sigma)$. 
\end{Proposition}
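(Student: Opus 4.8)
The plan is to realize $\clM_\sigma$ as the zero set of a Fredholm section of a Banach bundle over $\clH_0 \times (A \cap L_2)$ and to verify transversality by varying the Hamiltonian. First I would set up coordinates near a point $(H_s, p) \in \clM_\sigma$. Using Lemma \ref{lem_Lag_nbhd}, choose a $G$--invariant Lagrangian subbundle $E \subset TM|_{\Orb(p)}$ transverse to $T L_2|_{\Orb(p)}$, so that a $G$--invariant neighborhood $U$ of $p$ is equivariantly symplectomorphic to a neighborhood of the zero section in $T^*S$, where $S = U \cap L_2$, with the cotangent fibers corresponding to $E$. Since $\Phi_{H_s}(L_1)$ passes through $p$ transversally to the cotangent foliation (after possibly shrinking $\clH_0$, which is legitimate because transversality to the foliation is an open condition containing the case $H_s = 0$ by our choice of $E$), Lemma \ref{lem_exact} presents $\Phi_{H_s}(L_1) \cap U$ as the graph of an exact $G$--equivariant $1$--form $dg_{H_s}$ on $S$, depending smoothly (indeed, real-analytically in the Banach sense) on $H_s \in \clH_0$. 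Then $p' \in \Phi_{H_s}(L_1) \cap L_2 \cap U$ corresponds exactly to a critical point of $g_{H_s}: S \to \bR$, and near $p$ the critical set is $G$--invariant, so we may work with the induced function on the slice $U' \cong T_pS/T_p\Orb(p)$, which is $\Stab(p)$--equivariant. This reduces the problem to the finite-dimensional equivariant setup of the linear-algebra lemma from Section \ref{sec_preliminary}.

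Next I would describe the section. Let $V = T_pS / T_p\Orb(p)$ with its $\Stab(p)$--action, and let $W = \sym_{\Stab(p)}(\ker(\Hess\, g_{H_s}|_p))$ be the relevant space of equivariant self-adjoint maps. The defining equations for $\clM_\sigma$ are: (i) $dg_{H_s}(p') = 0$ (the intersection condition), encoded by the map $(H_s, p') \mapsto \grad g_{H_s}(p') \in V^*$ restricted to the slice; and (ii) the Hessian of $g_{H_s}$ at the critical point $p'$, as a $\Stab(p')$--equivariant self-adjoint operator, lies in the codimension-$d(\sigma)$ submanifold $\sym_{\Stab(p'),\sigma}(V)$ of the space of all equivariant self-adjoint operators — this is exactly the condition in the lemma preceding Section \ref{sec_preliminary} that the isotropy representation on the kernel represents $\sigma$. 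Condition (i) cuts out a Banach submanifold $\widetilde{\clM} \subset \clH_0 \times (A \cap L_2)$ of "all perturbed intersection points" — its smoothness follows because the partial derivative of $\grad g_{H_s}(p')$ in the Hamiltonian direction is already surjective onto $V^*$ (one can produce, via a local $G$--invariant bump function times a linear functional on the slice, a Hamiltonian whose flow translates $\Phi_{H_s}(L_1)$ in any prescribed cotangent direction along the orbit-transverse slice). On $\widetilde{\clM}$ the equivariant Hessian defines a smooth map to the bundle of equivariant self-adjoint operators, and $\clM_\sigma$ is the preimage of the submanifold $\sym_{\Stab,\sigma}$; the conclusion that $\clM_\sigma$ is a Banach manifold then follows from the implicit function theorem once we check this map is transverse to $\sym_{\Stab,\sigma}$.

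The transversality check is the heart of the argument, and by the cited lemma it amounts to showing that, at $(H_s, p) \in \clM_\sigma$, the composite "differentiate the equivariant Hessian in the Hamiltonian direction, then compose with $x \mapsto \Pi(\cdot(x))$ where $\Pi$ projects onto $\ker(\Hess\, g_{H_s}|_p)$" is surjective onto $\sym_{\Stab(p)}(\ker(\Hess\, g_{H_s}|_p))$. The key computation is that varying $H_s$ within $\clH$ one can independently prescribe the second-order Taylor jet of the generating function $g_{H_s}$ along the orbit-transverse slice at $p$: given any target $\Stab(p)$--equivariant quadratic form on $\ker(\Hess\, g_{H_s}|_p)$, one chooses a $G$--invariant Hamiltonian supported near $\Orb(p)$ which, in the $T^*S$ model, is (a $G$--average of) a function quadratic in the slice directions with the prescribed Hessian, and arranges via the $C_0^\infty([0,1], C_G^\infty(M))$ time-dependence that $\Phi_{H_s}$ modifies $L_1$ by precisely this quadratic graph perturbation while leaving the intersection point fixed to first order. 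Since the dense family $\{H_s^m\}$ spans a subset of $\clH$ whose image under this jet-evaluation map is dense, and the relevant target is finite-dimensional, surjectivity follows. Finally, the Fredholm index of $\pi_\sigma$: the fiber of $\pi_\sigma$ over $H_s$ is cut out inside the finite-dimensional $A \cap L_2$ by the intersection equation (locally $\dim V$ equations in the slice, with $\dim\Orb(p)$-dimensional solution families generically) plus the $d(\sigma)$ conditions pinning down the kernel type; tracking dimensions, the generic fiber has dimension $\dim\Orb(p) - d(\sigma) = \dim\, \sigma\text{-orbit stratum} - d(\sigma)$, but since the $G$--orbit directions are always solutions, the expected dimension of the fiber is $-d(\sigma)$, i.e. $\pi_\sigma$ is Fredholm of index $-d(\sigma)$. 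The main obstacle I anticipate is the careful bookkeeping in the transversality step: one must make sure the Hamiltonian perturbation can be chosen $G$--invariant, supported in $\clH_0$, and still act on the second-order jet of the generating function transversally to the stratum, without inadvertently moving the intersection point or the orbit — this requires combining the equivariant Weinstein neighborhood model of Lemma \ref{lem_Lag_nbhd} with an equivariant partition-of-unity construction and is where the density properties of $\clH$ inside $C_0^\infty([0,1], C_G^\infty(M))$ are essential.
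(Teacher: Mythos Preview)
Your proposal is essentially correct and follows the same architecture as the paper's proof: Weinstein neighborhood via Lemma \ref{lem_Lag_nbhd}, generating function via Lemma \ref{lem_exact}, reduction to a slice, and then the map $(H_s',q) \mapsto (\text{gradient}, \text{Hessian})$ whose transversality to $\{0\} \times \sym_{\Stab(p),\sigma}$ is established by showing the Hamiltonian variation surjects onto all $G$--invariant generating functions (the paper makes this explicit by pulling back a given $g$ through the partial flow, $\hat H_s = \chi(s)\cdot(\hat g \circ \Phi_{H_s}(s))$, and then invokes the abundance lemma from \cite{bai2020equivariant}).

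One small point to tighten: you choose the Lagrangian complement $E$ transverse only to $TL_2|_{\Orb(p)}$ and then assert that $\Phi_{H_s}(L_1)$ is transverse to the resulting cotangent foliation ``by our choice of $E$'' and by openness near $H_s=0$. This does not follow --- nothing in your choice of $E$ references $\Phi_{H_s}(L_1)$, and the point $(H_s,p)$ under consideration need not have $H_s$ near $0$. The paper avoids this by choosing the complement $Q_p$ transverse to \emph{both} $T\Phi_{H_s}(L_1)|_{\Orb(p)}$ and $TL_2|_{\Orb(p)}$ simultaneously, which is always possible and immediately gives the graph description of $\Phi_{H_{s}'}(L_1)$ for $H_{s}'$ near $H_s$. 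With that adjustment your argument goes through; the index bookkeeping is also cleaner if, as in the paper, you work directly on the fixed-point set $D^0$ of the slice rather than quotienting out orbit directions at the end.
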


\begin{proof}
	Let $(H_{s},p)\in \clM_\sigma$. Let $Q_p\subset TM|_{\Orb(p)}$ be a $G$--invariant Lagrangian subbundle such that $Q_p$ is transverse to $T\Phi_{H_{s}}(L_1)|_{\Orb(p)}$ and $T L_2|_{\Orb(p)}$ along $\Orb(p)$. Let $B_2$ be a small $G$--invariant open neighborhood of $p$ in $L_2$. Then by Lemma \ref{lem_Lag_nbhd}, there exists be a $G$--invariant open neighborhood $B$ of $p$ in $M$, such that 
	\begin{enumerate}
		\item There exists a $G$--equivariant symplectomorphism to a neighborhood of the zero section of $T^*B_2$,
		\item $T^*_p B_2$ is tangent to $Q_p$ at $p$ under the above diffeomorphism.
	\end{enumerate}
	After further shrinking $B_2$ and $B$ if necessary, by Lemma \ref{lem_exact}, $\Phi_{H_{s}}(L_1)\cap B$ is the graph of the differential of a $G$--invariant function on $\overline{B_2}$. This function is uniquely determined if we require that its integration over $\overline{B_2}$ is $0$. Therefore we have a map from a sufficiently small neighborhood $N(H_{s})$ of $H_{s}$ to $C^{\infty}_{G}(\overline{B_2})$
	$$
	\pi: N(H_{s}) \to C_G^\infty(\overline{B_2})
	$$
	such that for each $H_{s}^{\prime}\in N(H)$, we have $\Phi_{H_{s}^{\prime}}(L_1)\cap \overline{B}$ is the graph of $d\pi(H_{s}^{\prime})$, and $\int_{\overline{B_2}}\pi(H_{s}^{\prime})=0$. 
\begin{claim}
 The map $\pi$ is a submersion at $H_{s}$ onto the subspace
	$$
	\big\{g\in C_G^\infty(\overline{B_2}) | \int_{\overline{B_2}} g = 0\big\}.
	$$ 
\end{claim}
	
	For each $g\in C_G^\infty(\overline{B_2})$ such that $\int_{\overline{B_2}} g =0$, let $\hat g\in C_G^\infty(M)$ be a smooth, compactly supported function on $M$, such that $\hat g|_{B}$ is given by the pull-back of $g$ from $B_2$ to $T^*B_2$. Let $\chi:[0,1]\to \bR^{\ge 0}$ be a smooth, compactly supported function such that $\int \chi =1$. For $s\in[0,1]$, let $\Phi_{H_{s}}(s)$ be the integration of $X_{H_{s}}$ from $s$ to $1$. Define
	$$
	\hat{H_s} = \chi(s) \cdot\big(\hat g\circ  \Phi_{H_{s}}(s)\big).
	$$
	Then 
	$$
	\frac{d}{dv} \pi(H+v\hat H) = g.
	$$
	Therefore the claim is proved.

Let $D$ be a slice of $B_2$ under the $G$--action through $p$. Let $D^0$ be the fixed point subset of $D$ under the $\Stab(p)$--action. Then for any $(\Phi_{H^{\prime}_s}, q) \in \clM_\sigma$ such that $H^{\prime}_s \in N(H_{s})$ and $q \in B$, we have $q \in D^0$ where $D^0$ is viewed as a subset of the zero section of the Weinstein neighborhood. Choose a $G$--invariant Riemannian metric on $B_2$ and let $S_p$ be the orthogonal complement of $T_{p} \Orb(p)$ in $T_p B_2$ and let $S_p^0$ be the fixed point subset of $S_p$ under the $\Stab(p)$--action. Using parallel transports, we can identify $TD$ with $S_p$. Now we can define a map
$$\psi: N(H_{s}) \times D^0 \rightarrow (S_p^0)^* \times \sym_{\Stab(p)}(S_p)$$
$$(H_{s}^{\prime}, q) \mapsto (\nabla_{q}(\pi(H_{s}^{\prime}))|_{D^0}, \Hess(\pi(H_{s}^{\prime}))|_{S_p}),$$
where $\nabla$ and $\Hess$ stand for the gradient and Hessian respectively. It is easy to see that 
$$\clM_{\sigma} = \psi^{-1}(\{ 0 \} \times \sym_{\Stab(p), \sigma}(S_p)).$$
Using the abundance result \cite[Lemma 2.25]{bai2020equivariant} and the result established in the claim, we conclude that the differential of $\psi$ at $(H_{s},p)$ is surjective. Therefore $\clM_{\sigma}$ is a Banach manifold near $(H_{s},p)$. Because the inclusion 
$$\psi^{-1}(\{ 0 \} \times \sym_{\Stab(p), \sigma}(S_p)) \hookrightarrow N(H_{s}) \times D^0$$ is a Fredholm map of index $- \dim(D^0) - d(\sigma)$, the projection map from $\clM_{\sigma}$ to $\clH$ is Fredholm of index $-d(\sigma)$. The proposition is proved because of the separability of $\clH_0\times (A\cap L_2)$.
\end{proof}

\begin{Corollary}
\label{cor_equi_trans}
Let $L_1$ and $L_2$ be two $G$--invariant Lagrangians in $(M, \omega)$ such that their intersection is contained in a compact subset of $M$. Let $\clH_{0}$ be a small neighborhood of $0$ in $\clH$.Then	for a generic perturbation $H_{s} \in\clH_0$, the pair $\Phi_{H_{s}}(L_1)$ and $L_2$ intersect non-degenerately.
\end{Corollary}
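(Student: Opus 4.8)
The plan is to deduce Corollary~\ref{cor_equi_trans} from Proposition~\ref{prop_universal_moduli_space} by a standard Sard--Smale argument, stratified over the countable set of equivalence classes in $\clR_G$. First I would recall that, by the remark following Definition~\ref{def_Lag_non-deg}, $\Phi_{H_s}(L_1)$ and $L_2$ intersect non-degenerately at $p$ precisely when $\bigl(T_p\Phi_{H_s}(L_1)\cap T_pL_2\bigr)/T_p\Orb(p)=0$, i.e.\ when the $\Stab(p)$--representation at $p$ represents the class $\sigma_0\in\clR_G$ of the zero representation, for which $d(\sigma_0)=0$. Thus the pair intersects non-degenerately if and only if $(H_s,p)\notin\clM_\sigma$ for every $\sigma\in\clR_G$ with $d(\sigma)>0$.

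Next I would note that $\clR_G$ is a countable set (closed subgroups of a compact Lie group up to conjugacy, together with finitely many representations in each bounded dimension, form a countable collection --- or, more simply, restrict attention to those $\sigma$ that can actually occur as isotropy data along $A\cap L_2$, still a countable set), so it suffices to show that for each fixed $\sigma$ with $d(\sigma)>0$, the set of $H_s\in\clH_0$ with $(H_s,p)\in\clM_\sigma$ for some $p$ is contained in a meagre set; a countable intersection of the complementary residual sets is then residual, hence dense, by the Baire category theorem applied to the Banach space $\clH$. For a fixed such $\sigma$, Proposition~\ref{prop_universal_moduli_space} tells us $\clM_\sigma$ is a Banach manifold and the projection $\pi_\sigma\colon\clM_\sigma\to\clH_0$ is Fredholm of index $-d(\sigma)<0$. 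Since $\clM_\sigma$ is a manifold modelled on a separable Banach space and $\pi_\sigma$ is $C^\infty$ (indeed $C^1$ suffices here) and Fredholm, the Sard--Smale theorem applies: the set of regular values of $\pi_\sigma$ is residual in $\clH_0$. But a regular value of a Fredholm map of negative index can only be a value not attained, because the fibre over a regular value is a manifold of dimension equal to the index, which is negative and hence empty. Therefore for $H_s$ in a residual subset of $\clH_0$, $H_s$ is not in the image of $\pi_\sigma$, i.e.\ there is no $p$ with $(H_s,p)\in\clM_\sigma$.

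Finally, intersecting over all $\sigma$ with $d(\sigma)>0$ gives a residual, hence nonempty and dense, subset of $\clH_0$ for which $\Phi_{H_s}(L_1)\cap L_2$ is disjoint from every $\clM_\sigma$ with $d(\sigma)>0$; by the first paragraph this is exactly the statement that $\Phi_{H_s}(L_1)$ and $L_2$ intersect non-degenerately. The main obstacle --- already absorbed into Proposition~\ref{prop_universal_moduli_space} --- is the equivariant transversality/abundance input (Lemma~\ref{lem_Lag_nbhd}, Lemma~\ref{lem_exact}, and \cite[Lemma~2.25]{bai2020equivariant}) ensuring that $\pi_\sigma$ really is Fredholm of the asserted index and that $\clM_\sigma$ is genuinely a manifold; granting that, the remaining work is the routine but necessary bookkeeping that $\clR_G$ (or the relevant part of it) is countable, so that Baire category can be invoked, and the observation that negative-index Fredholm maps have empty generic fibres. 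One should also check that the perturbations produced stay inside $\clH_0$ and that the intersection $\Phi_{H_s}(L_1)\cap L_2$ remains in the interior of the compact set $A$, which holds by the a priori choice of $\clH_0$.
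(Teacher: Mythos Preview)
Your proposal is correct and follows essentially the same route as the paper: the paper also invokes Proposition~\ref{prop_universal_moduli_space} to conclude that for each nonzero $\sigma$ the image $\clH_\sigma:=\pi_\sigma(\clM_\sigma)$ has positive codimension in $\clH_0$, takes the (countable) union over $\sigma$, and observes that its complement gives the desired generic set. Your version spells out the Sard--Smale step and the countability of $\clR_G$ more explicitly, but the argument is the same.
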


\begin{proof}
Using Proposition \ref{prop_universal_moduli_space}, because $d(\sigma) > 0$ as long as $\sigma$ is not given by the $0$--representation, $\clH_{\sigma} := \pi_{\sigma}(\clM_{\sigma})$ is a $C^{\infty}$--subvariety of $\clH^{0}$ of positive codimension. As a result, 
$$\bigcup_{\sigma \in \clR_G} \clH_{\sigma}$$
is a meager subset of $\clH_{0}$. Any Hamiltonian perturbation constructed from elements in $\clH_0 \backslash \bigcup_{\sigma \in \clR_G} \clH_{\sigma}$ establishes the transversality result.
\end{proof}

Now we show that for a generic $1$--paramter family $\{ H_{s,t} \}_{0 \leq t \leq 1}$, the intersection $\Phi_{H_{s,t}}(L_1) \cap L_2$ has at most one degenerate intersection orbit which corresponds to an irreducible representation for each $t$.

Choose $\clH_0$ and $A$ as before. Suppose $\sigma_1, \sigma_2 \in \clR_{G}$. Define $\clM_{\sigma_1, \sigma_2} \subset \clH_0 \times (A \cap L_2) \times (A \cap L_2)$ to be the set of points $(H_{s}, p_1, p_2)$ such that 
\begin{enumerate}
	\item $p_1, p_2 \in \Phi_{H_{s}}(L_1)\cap L_2$,
     \item $\Orb(p_1) \cap \Orb(p_2) = \emptyset$,
	\item The spaces $\big(T_{p_i} \Phi_{H_{s}}(L_1) \cap T_{p_i}(L_2)\big) / T_{p_i}\Orb(p_i)$ as $\Stab(p_i)$--representations represent $\sigma_i$ for $i = 1, 2$.
\end{enumerate}
Let $\clH_{\sigma_1, \sigma_2}$ be the projection of $\clM_{\sigma_1, \sigma_2}$ onto $\clH_0$. The following statement follows from the arguments in the proof of Proposition \ref{prop_universal_moduli_space} and \cite[Lemma 3.23]{bai2020equivariant}, thus the proof is omitted.

\begin{Proposition}
\label{prop_2rep}
$\clH_{\sigma_1, \sigma_2}$ is a $C^{\infty}$--subvariety of $\clH_0$ of codimension at least $d(\sigma_1) + d(\sigma_2)$. \qed
\end{Proposition}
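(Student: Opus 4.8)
The plan is to follow the structure established in the proof of Proposition \ref{prop_universal_moduli_space}, now working with two marked intersection points simultaneously, and to combine this with the two-representation stratification result \cite[Lemma 3.23]{bai2020equivariant}. First I would fix a point $(H_{s}, p_1, p_2) \in \clM_{\sigma_1, \sigma_2}$ and, using that $\Orb(p_1)$ and $\Orb(p_2)$ are disjoint (condition (2)), choose disjoint $G$--invariant Weinstein neighborhoods $B^{(1)}$ and $B^{(2)}$ of $p_1$ and $p_2$ respectively, as in Lemma \ref{lem_Lag_nbhd}; within each, $\Phi_{H_{s}'}(L_1)$ is presented as the graph of $d\pi^{(i)}(H_{s}')$ for a $G$--invariant function $\pi^{(i)}$ on the local slice $\overline{B_2^{(i)}}$, uniquely normalized to have zero integral. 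This yields local maps $\pi^{(i)} : N(H_{s}) \to C_G^\infty(\overline{B_2^{(i)}})$, and I would form the product map $\psi = (\psi^{(1)}, \psi^{(2)})$ taking $(H_{s}', q_1, q_2)$ to the pair of gradient-and-Hessian data on the fixed-point slices at $p_1$ and $p_2$, so that $\clM_{\sigma_1, \sigma_2} = \psi^{-1}(\{0\} \times \sym_{\Stab(p_1),\sigma_1}(S_{p_1}) \times \{0\} \times \sym_{\Stab(p_2),\sigma_2}(S_{p_2}))$.

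The key step is to promote the single-point submersivity established in the Claim inside the proof of Proposition \ref{prop_universal_moduli_space} to a \emph{joint} submersivity statement for $\psi$: the differential at $(H_{s}, p_1, p_2)$ must surject onto the product of the two target spaces. This is where the disjointness of the supports matters. Given a perturbation direction $g_1$ supported near $p_1$ and $g_2$ supported near $p_2$, I would run the construction of $\hat{H_s}$ from the Claim separately in the two disjoint neighborhoods $B^{(1)}, B^{(2)}$; because the extensions $\hat{g}_i$ can be chosen with disjoint compact supports and the flows $\Phi_{H_{s}}(s)$ are fixed, the resulting Hamiltonian perturbations act independently on the two local pieces, so $\frac{d}{dv}\pi^{(i)}(H + v\hat{H}) = g_i$ with no cross terms. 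Combining this with the abundance result \cite[Lemma 2.25]{bai2020equivariant} applied at each point (as in the original proof) and with \cite[Lemma 3.23]{bai2020equivariant} to control the codimension contributed by each $\sym_{\Stab(p_i),\sigma_i}$ stratum, I conclude that $\clM_{\sigma_1,\sigma_2}$ is a Banach manifold and the projection $\pi_{\sigma_1,\sigma_2}$ to $\clH_0$ is Fredholm of index $-d(\sigma_1) - d(\sigma_2)$; a Sard–Smale argument (using separability of $\clH_0 \times (A \cap L_2) \times (A \cap L_2)$) then gives that the image $\clH_{\sigma_1,\sigma_2}$ is a $C^\infty$--subvariety of codimension at least $d(\sigma_1) + d(\sigma_2)$.

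The main obstacle I anticipate is verifying that the two local submersions genuinely decouple — i.e. that a Hamiltonian perturbation localized near $\Orb(p_1)$ does not inadvertently perturb $\Phi_{H_{s}}(L_1)$ near $\Orb(p_2)$ through the time-one flow. This is precisely why condition (2), $\Orb(p_1) \cap \Orb(p_2) = \emptyset$, is imposed: one must check that the backward flow $\Phi_{H_{s}}(s)$ moves a neighborhood of $\Orb(p_1)$ to a region still disjoint from $\Orb(p_2)$, which can be arranged by shrinking the neighborhoods $B^{(i)}$ and the support of $\chi$, and by choosing $\hat g_i$ supported in suitably small sets. Once this decoupling is secure, the rest is a routine repackaging of the Proposition \ref{prop_universal_moduli_space} argument, which is why the detailed write-up is omitted in favor of citing the parallel arguments.
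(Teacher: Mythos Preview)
Your proposal is correct and is precisely the approach the paper has in mind: the paper omits the proof entirely, saying only that it follows from the arguments of Proposition~\ref{prop_universal_moduli_space} together with \cite[Lemma 3.23]{bai2020equivariant}, and your write-up is a faithful unpacking of that sentence, including the crucial decoupling observation (enabled by condition~(2)) that makes the joint submersion go through.
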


Note that $d(\sigma) = 1$ if and only if $\sigma$ is irreducible. Therefore the following corollary follows immediately from Proposition \ref{prop_universal_moduli_space} and Proposition \ref{prop_2rep} by an application of the Sard-Smale theorem.

\begin{Corollary}
Let $\{ H_{s,t} \}_{0 \leq s \leq 1}$ be a generic path in $\clH_0$ connecting $H_{s,0}, H_{s,1} \in \clH_0 \backslash \bigcup_{\sigma \in \clR_G} \clH_{\sigma}$ which intersect all $\clH_{\sigma}$ and $\clH_{\sigma_1, \sigma_2}$ transversely. Then there are at most countably many $t$ such that $\Phi_{H_{s,t}}(L_1)$ do not intersect $L_2$ non-degenerately. For every such $t \in (0,1)$, there is exactly one intersection orbit $\Orb(p_t)$ such that $\big(T_{p_t} \Phi_{H_{s,t}}(L_1) \cap T_{p_t}(L_2)\big) / T_{p_t}\Orb(p_t)$ is an irreducible $\Stab(p_t)$--representation, and  $\Phi_{H_{s,t}}(L_1)$ intersect $L_2$ non-degenerately along any other orbits. \qed
\end{Corollary}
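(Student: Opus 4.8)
The plan is to deduce the corollary from the path (i.e.\ $1$--parameter) versions of Proposition~\ref{prop_universal_moduli_space} and Proposition~\ref{prop_2rep}, together with the Sard--Smale theorem, as the sentence preceding the statement indicates. Let $\clP$ denote a separable Banach manifold of smooth paths $\gamma\colon[0,1]\to\clH_0$ with fixed endpoints $\gamma(0)=H_{s,0}$ and $\gamma(1)=H_{s,1}$, and let $\mathrm{ev}\colon\clP\times(0,1)\to\clH_0$, $\mathrm{ev}(\gamma,t)=\gamma(t)$, be the evaluation map; since a path may be deformed freely at any interior time $t\in(0,1)$ without moving its endpoints, $\mathrm{ev}$ is a submersion. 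For each $\sigma\in\clR_G$ I would form the fibre product
$$
\widetilde{\clM}_\sigma:=\bigl\{(\gamma,t,p)\ \big|\ (\gamma,t)\in\clP\times(0,1),\ p\in A\cap L_2,\ (\gamma(t),p)\in\clM_\sigma\bigr\},
$$
and similarly $\widetilde{\clM}_{\sigma_1,\sigma_2}$ using the Banach manifold $\clM_{\sigma_1,\sigma_2}$ constructed in the proof of Proposition~\ref{prop_2rep}. Because $\mathrm{ev}$ is a submersion, $\widetilde{\clM}_\sigma$ is a transverse pullback of the Fredholm projection $\pi_\sigma\colon\clM_\sigma\to\clH_0$, hence a Banach manifold, and the projection $\widetilde{\clM}_\sigma\to\clP\times(0,1)$ is Fredholm of the same index $-d(\sigma)$; composing with the projection $\clP\times(0,1)\to\clP$, whose fibre $(0,1)$ is $1$--dimensional, we see that $p_\sigma\colon\widetilde{\clM}_\sigma\to\clP$ is Fredholm of index $1-d(\sigma)$. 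The same argument gives a Fredholm projection $p_{\sigma_1,\sigma_2}\colon\widetilde{\clM}_{\sigma_1,\sigma_2}\to\clP$ of index $\le 1-d(\sigma_1)-d(\sigma_2)$.

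I would then apply the Sard--Smale theorem to each of these projections: for every $\sigma$, resp.\ every pair $(\sigma_1,\sigma_2)$, the set of regular values of $p_\sigma$, resp.\ $p_{\sigma_1,\sigma_2}$, is residual in $\clP$, and, exactly as in the proof of Corollary~\ref{cor_equi_trans} (only countably many $\sigma$ and pairs being relevant), the intersection $\clP^{\mathrm{reg}}$ of all of them is residual, hence dense, in $\clP$. A \emph{generic path} is, by definition, an element of $\clP^{\mathrm{reg}}$, which is precisely the requirement that $\gamma$ meet every $\clH_\sigma$ and every $\clH_{\sigma_1,\sigma_2}$ transversely. For such a $\gamma$ the fibres $p_\sigma^{-1}(\gamma)$ and $p_{\sigma_1,\sigma_2}^{-1}(\gamma)$ are smooth manifolds of dimensions $1-d(\sigma)$ and $\le 1-d(\sigma_1)-d(\sigma_2)$ respectively.

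The conclusion is now read off from the fact (recalled just before the statement) that $d(\sigma)\ge 1$ for every nontrivial $\sigma$, with equality exactly when $\sigma$ is irreducible, while $d(\sigma)=0$ only for the zero representation. If $\sigma$ is nontrivial but not irreducible, then $d(\sigma)\ge2$, so $p_\sigma^{-1}(\gamma)=\emptyset$ and no intersection orbit of $\Phi_{H_{s,t}}(L_1)\cap L_2$ ever has $\bigl(T_p\Phi_{H_{s,t}}(L_1)\cap T_pL_2\bigr)/T_p\Orb(p)$ of type $\sigma$. If $\sigma$ is irreducible, then $p_\sigma^{-1}(\gamma)$ is $0$--dimensional; by separability of $\widetilde{\clM}_\sigma$ it is countable, and so is its image under $(\gamma,t,p)\mapsto t$. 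Taking the union over the countably many irreducible $\sigma$ shows that the set of $t\in(0,1)$ at which $\Phi_{H_{s,t}}(L_1)$ fails to intersect $L_2$ non-degenerately (Definition~\ref{def_Lag_non-deg}) is countable. For any two nontrivial $\sigma_1,\sigma_2$ one has $d(\sigma_1)+d(\sigma_2)\ge2$, so $p_{\sigma_1,\sigma_2}^{-1}(\gamma)=\emptyset$; since distinct $G$--orbits are automatically disjoint, this rules out two distinct degenerate intersection orbits at any single parameter. Combining these observations, at each of the countably many bad parameters $t$ there is exactly one degenerate orbit $\Orb(p_t)$, its excess $\bigl(T_{p_t}\Phi_{H_{s,t}}(L_1)\cap T_{p_t}L_2\bigr)/T_{p_t}\Orb(p_t)$ is an irreducible $\Stab(p_t)$--representation, and $\Phi_{H_{s,t}}(L_1)$ meets $L_2$ non-degenerately along every other orbit.

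I expect the main technical obstacle to be confirming that the parametrized moduli spaces $\widetilde{\clM}_\sigma$ and $\widetilde{\clM}_{\sigma_1,\sigma_2}$ are genuinely cut out transversally once the path endpoints are pinned at $H_{s,0},H_{s,1}\in\clH_0\setminus\bigcup_\sigma\clH_\sigma$; this comes down to the fact that the bump Hamiltonians $\hat H_s$ produced in the proof of Proposition~\ref{prop_universal_moduli_space} can be supported near any interior time $t\in(0,1)$ without disturbing the endpoints, so that the abundance input \cite[Lemma~2.25]{bai2020equivariant} (and \cite[Lemma~3.23]{bai2020equivariant} for the two--orbit case, where the hypothesis $\Orb(p_1)\cap\Orb(p_2)=\emptyset$ decouples the two local problems) applies verbatim, together with the routine index bookkeeping in which each free real parameter shifts the Fredholm index by one.
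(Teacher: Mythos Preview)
Your argument is correct and is precisely the standard Sard--Smale bookkeeping the paper has in mind: the paper itself supplies no proof beyond the sentence ``the following corollary follows immediately from Proposition~\ref{prop_universal_moduli_space} and Proposition~\ref{prop_2rep} by an application of the Sard-Smale theorem,'' and your parametrized moduli spaces $\widetilde{\clM}_\sigma$, $\widetilde{\clM}_{\sigma_1,\sigma_2}$ with their index counts $1-d(\sigma)$ and $\le 1-d(\sigma_1)-d(\sigma_2)$ are exactly how one unpacks that sentence.
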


\subsection{An equivariant Maslov index}
\label{subsec_equiv_mas_ind}

In this subsection, we give the definition of equivariant Maslov index which will appear in the index formula later on.

We recall the definition of the classical Maslov index. Let $(V_0, \omega_0)$ be a symplectic vector space. For $a, b \in \bR$ such that $a < b$, let $\clP_{[a,b]}(V_0)$ be the space of continuous and piecewise smooth maps
$$q: [a, b] \rightarrow \{ \textrm{paris of Lagrangian subspaces in }  V_0 \}.$$
Then there exists a map
$$\mu_{V_0}: \clP_{[a,b]}(V_0) \rightarrow \bZ$$ which is called the \emph{Maslov index} uniquely characterized by the following properties (see \cite{cappell1994maslov}):
\begin{enumerate}
\item (Affine scale invariance) Suppose $k > 0$ and $l \geq 0$ and let $\Gamma: [a, b] \rightarrow [ka+l, kb+l]$ be the affine linear map given by $t \mapsto kt+l$. Then for $q \in \clP_{[ka+l, kb+l]}(V_0)$, we have
$$\mu_{V_0}(q) = \mu_{V_0}(q \circ \Gamma);$$
\item (Homotopy invariance) Given $q_0, q_1 \in \clP_{[a,b]}(V_0)$ such that $q_0(a) = q_1(a)$ and $q_0(b) = q_1(b)$, if there is a homotopy between $q_0$ and $q_1$ in $\clP_{[a,b]}(V_0)$ relative to end points, then
$$\mu_{V_0}(q_0) = \mu_{V_0}(q_1);$$
\item(Additivity under concatenation of paths) If $a < c < b$ and $q \in \clP_{[a,b]}(V_0)$, then the following additive property is satisfied:
$$\mu_{V_0}(q) = \mu_{V_0}(q|_{[a,c]}) + \mu_{V_0}(q|_{[c,b]});$$
\item (Symplectic additivity) Let $(V_0^{\prime}, \omega_0^{\prime})$ be another symplectic vector space and suppose $q \in \clP_{[a,b]}(V_0), q' \in \clP_{[a,b]}(V_0^{\prime})$. Then
$$\mu_{V_0 \oplus V_0^{\prime}}(q \oplus q') = \mu_{V_0}(q) + \mu_{V_0^{\prime}}(q');$$
\item (Independence of symplectic framing) Suppose $\phi_t: [a,b] \rightarrow \Sp(V_0)$ is a $1$--parameter family of symplectic matrices. Denote by $q(t) = (L_1(t), L_2(t))$ for some $q \in \clP_{[a,b]}(V_0)$ and define $\phi_{*}(q)(t) = (\phi_t (L_1(t)), \phi_t (L_2(t)))$. Then
$$\mu_{V_0}(\phi_{*}(q)(t)) = \mu_{V_0}(q(t));$$
\item (Normalization) Equip $\bR^2_{x, y} \cong \bC_{z=x+iy}$ with the symplectic form $dx \wedge dy = \frac{i}{2} dz \wedge d \overline{z}$. Consider the path of Lagrangians $p(t)$ in $\clP_{[-\frac{\pi}{4}, \frac{\pi}{4}]}(\bR^2)$ given by
$$t \mapsto (\bR\{1\}, \bR\{e^{it}\}).$$
Then
$$\mu_{\bR^2}(p|_{[-\frac{\pi}{4}, \frac{\pi}{4}]}) = 1, \mbox{ } \mu_{\bR^2}(p|_{[-\frac{\pi}{4}, 0]}) = 0, \mbox{ } \mu_{\bR^2}(p|_{[-\frac{\pi}{4}, 0]}) = 1.$$
\end{enumerate}

There are many ways to construct the map $\mu_{V_0}$ explicitly. For our purpose later, we use the characterization of $\mu_{V_0}$ as the spectral flow of certain self-adjoint operators, see \cite[Section 7]{cappell1994maslov}. Let $J_0 \in GL(V_0)$ be a complex structure which is compatible with $\omega_0$. Given $L_1, L_2 \subset V_0$ which are linear Lagrangian subspaces, let $L^2_{1}([0,1]; L_1, L_2)$ be the completion of smooth maps $f: [0,1] \rightarrow V$ with $f(0) \in L_1, f(1) \in L_2$ under $L^2_1$--norm induced by the metric $\omega(\cdot, J_0 \cdot)$. Let $L^2([0,1]; V_0)$ be the space of $L^2$--maps from $[0,1]$ to $V_0$. Then the operator
$$D_{L_1, L_2}: L^2_{1}([0,1]; L_1, L_2) \rightarrow L^2([0,1]; V_0)$$
$$f \mapsto - J_0 \frac{df}{dt}$$
is a real self-adjoint unbounded Fredholm operator with discrete spectrum. The kernel of $D_{L_1, L_2}$ is the same as constant functions taking value in $L_1 \cap L_2$.

If we take a path of pairs of Lagrangian subspaces $q(t) = (L_1(t), L_2(t))_{t \in [a,b]}$ in $V_0$, we have a family of self-adjoint operators $\{ D_{L_1(t), L_2(t)} \} _{a \leq t \leq b}$. Let $\epsilon > 0$ be small enough so that $D_{L_1(a), L_2(a)}$ and $D_{L_1(b), L_2(b)}$ do not have any other eigenvalues between $- \epsilon$ and $\epsilon$ except for $0$.
\begin{Proposition}
\label{prop_maslov_spectral}
The spectral flow of \{$D_{L_1(t), L_2(t)} - \epsilon \id \}_{a \leq t \leq b}$ is independent of the choice of $J_0$ and equal to $\mu_{V_0}(p).$ \qed
\end{Proposition}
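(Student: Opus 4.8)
\emph{Strategy and preliminaries.} The plan is to invoke the uniqueness characterization of $\mu_{V_0}$: writing $\mu'(q) := \mathrm{Sf}\big(\{D_{L_1(t),L_2(t)}-\epsilon\,\id\}_{t\in[a,b]}\big)$, I verify that $\mu'$ satisfies axioms (1)--(6) and is independent of the compatible complex structure $J_0$, whence $\mu' = \mu_{V_0}$. I use throughout the standard properties of the spectral flow $\mathrm{Sf}$ of a continuous path of self-adjoint Fredholm operators with compact resolvent: homotopy invariance (for paths with invertible, not necessarily fixed, endpoints), additivity under concatenation and under direct sums, and invariance under unitary conjugation; together with the elementary observation that $\mathrm{Sf}(\{D_t - s\,\id\})$ is independent of $s$ for all sufficiently small $s>0$ (the straight-line homotopy keeps the endpoints invertible), which makes $\mu'$ well defined and compatible with restriction to subpaths. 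As a preliminary I record that $(L_1,L_2)\mapsto D_{L_1,L_2}$ is continuous in the gap topology: locally one chooses a smooth path $s\mapsto U_{L_1,L_2}(s)$ in the $J_0$--unitary group of $V_0$ with $U_{L_1,L_2}(0)$ carrying a fixed reference Lagrangian to $L_1$ and $U_{L_1,L_2}(1)$ carrying a fixed reference Lagrangian to $L_2$; pointwise multiplication by $U_{L_1,L_2}(s)$ then identifies $L^2_1([0,1];L_1,L_2)$ with a single Hilbert space and conjugates $D_{L_1,L_2}$ into $-J_0\,d/ds$ plus a norm--continuously varying bounded symmetric zeroth-order term, so that $\mathrm{Sf}$ applies.

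\emph{Independence of $J_0$, and axioms (1)--(4).} The space of $\omega_0$--compatible complex structures is contractible, hence connected; a path $\{J_0^u\}$ between two choices yields a homotopy of the operator family whose endpoint operators have $u$--independent kernels $L_1(a)\cap L_2(a)$ and $L_1(b)\cap L_2(b)$, so one $\epsilon$ works uniformly and the spectral flow is constant in $u$. The same two--parameter argument shows $\mu'$ is unchanged if the fixed $J_0$ is replaced by a continuous $t$--dependent family $\{J_0(t)\}$ of compatible complex structures; I will need this for axiom (5). Axiom (1) is immediate from reparametrization invariance of $\mathrm{Sf}$; axiom (4) holds because $D_{L_1\oplus L_1',\,L_2\oplus L_2'}=D_{L_1,L_2}\oplus D_{L_1',L_2'}$ for $J_0\oplus J_0'$ and $\mathrm{Sf}$ is additive over direct sums; axioms (2) and (3) are the homotopy invariance and concatenation additivity of $\mathrm{Sf}$, using the preliminary continuity and choosing $\epsilon$ below the relevant spectral gaps (and, for (3), also below the gap at the subdivision point).

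\emph{Axiom (5).} Given a path $\phi_t\in\Sp(V_0)$, put $J_0(t):=\phi_tJ_0\phi_t^{-1}$, which is $\omega_0$--compatible since $\phi_t$ preserves $\omega_0$. The pointwise action of $\phi_t$ on maps $[0,1]\to V_0$ is an isometry from $(L^2,\omega_0(\cdot,J_0\cdot))$ onto $(L^2,\omega_0(\cdot,J_0(t)\cdot))$ which carries the boundary conditions $(L_1(t),L_2(t))$ to $(\phi_tL_1(t),\phi_tL_2(t))$ and intertwines $D_{L_1(t),L_2(t)}$ with the operator $D_{\phi_tL_1(t),\phi_tL_2(t)}$ defined using $J_0(t)$; hence the two families are unitarily equivalent and have the same spectral flow. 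By the $t$--dependent--$J_0$ statement of the previous paragraph, the spectral flow of $\{D_{\phi_tL_1(t),\phi_tL_2(t)}\}$ computed with $J_0(t)$ equals the one computed with the original $J_0$, that is $\mu'(\phi_*q)$. Therefore $\mu'(\phi_*q)=\mu'(q)$.

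\emph{Axiom (6) and conclusion.} On $\bR^2\cong\bC$ with $J_0$ multiplication by $i$, $L_1=\bR$ and $L_2(t)=e^{it}\bR$, the eigenvalue equation $-i f'=\lambda f$ with $f(0)\in\bR$ and $f(1)\in e^{it}\bR$ forces $\lambda\in t+\pi\bZ$; only the branch $\lambda=t$ meets a neighbourhood of $0$, it crosses $0$ transversally and with positive sign as $t$ increases, so after the $\epsilon$--shift (which resolves the ambiguity at $t=0$, where the kernel is $L_1\cap L_2(0)$) one reads off the three stated values $1$, $0$, $1$. With all six axioms verified and independence of $J_0$ established, the uniqueness characterization of the Maslov index gives $\mu'=\mu_{V_0}$. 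I expect axiom (5) to be the main obstacle: a symplectic $\phi_t$ is generally not $J_0$--unitary, so $D$ cannot be conjugated within one fixed Hilbert space, and the argument must pass through a $t$--dependent complex structure together with the two--parameter fact that $\mathrm{Sf}$ is insensitive to a continuous choice of compatible $J_0$ along the path; arranging this carefully, along with the gap--continuity of $(L_1,L_2)\mapsto D_{L_1,L_2}$ underlying axiom (2), is where the genuine work lies.
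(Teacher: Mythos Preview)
Your proof is correct. The paper itself does not supply a proof of this proposition: the statement is marked with a \qed\ and is simply quoted as a known fact, with the sentence preceding it pointing the reader to \cite[Section 7]{cappell1994maslov} for the characterization of $\mu_{V_0}$ via spectral flow. So you have written out in full an argument that the paper outsources to the literature.

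Your route---verifying that the spectral-flow assignment satisfies the six axioms that uniquely determine the Maslov index, plus independence of $J_0$---is exactly the standard strategy and is the one behind the cited reference. The only place requiring real care is axiom~(5), and your handling of it is sound: since $\phi_t\in\Sp(V_0)$ need not be $J_0$--unitary, you conjugate to the $t$--dependent compatible structure $J_0(t)=\phi_t J_0\phi_t^{-1}$, observe that $\phi_t$ is then an isometry $\omega_0(\cdot,J_0\cdot)\to\omega_0(\cdot,J_0(t)\cdot)$ intertwining the two operator families, and finally invoke the contractibility of compatible complex structures (extended to $t$--dependent families via a two-parameter homotopy with fixed endpoint kernels) to return to the fixed $J_0$. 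The normalization computation is also correct: the spectrum of $D_{\bR,\,e^{it}\bR}$ is $t+\pi\bZ$, and the $-\epsilon$ shift places the $t=0$ kernel on the negative side, reproducing the values $1,0,1$ on the three subintervals.
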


Now suppose $V_0$ is a $H$--representation where $H$ is a connected compact Lie group, and the $H$--action preserves the symplectic form $\omega_0$. Let 
$$V_0 \cong (V_0^{1})^{\oplus a_1} \oplus \cdots \oplus (V_0^{m})^{\oplus a_m}$$
be the isotypic decomposition of $V_0$. Then each isotypic piece inherits a symplectic structure and the above decomposition is compatible with the symplectic structure.
If $q(t) = (L_1(t), L_2(t))_{t \in [a,b]}$ is a path such that both $L_1(t)$ and $L_2(t)$ are preserved under the $H$--action, then
$$(V_0^{1})^{\oplus a_1} \cap L_i(t) \oplus \cdots \oplus (V_0^{m})^{\oplus a_m} \cap L_i(t)$$
gives the isotypic decomposition of $L_{i}(t)$ for $i=1,2$ and $t \in [a,b]$. Write $q_{k}(t)$ as the induced path on the isotypic piece $(V_0^{k})^{\oplus a_k}$ for $1 \leq k \leq m$. 
\begin{Definition}
The \emph{equivariant Maslov index} $\mu^{H}(q)$ of the path $q(t)$ as above is the element
\begin{equation}
\sum_{i=1}^m \frac{1}{\dim_{\bR} V_i} \cdot \mu_{(V_0^{i})^{\oplus a_i}}(q_{i}) \cdot [V_i] \in \clR(H).
\end{equation}
\end{Definition}

\begin{remark}
The above quantity is well-defined because the $V_i$--isotypic piece of $L_1(t) \cap L_2(t)$ would change the Maslov index by a multiple of $\dim_{\bR} V_i$. Alternatively, we can define $\mu^{H}(q)$ using the equivariant spectral flow of the $H$--equivariant operator $-J_{0} \frac{d}{dt}$ where $J_0$ is an $H$--equivariant complex structure on $V_0$ compatible with $\omega_0$, based on Definition \ref{def_equi_spectral_flow}  and Proposition \ref{prop_maslov_spectral}.
\end{remark}

Now we go back to discuss about global symplectic geometry. Recall that $(M, \omega)$ is a symplectic manifold with a symplectic $G$--action and $L_1$ and $L_2$ are two $G$--invariant Lagrangians of $M$. Suppose $H$ is a closed subgroup of $G$. Let $\overline{\mathbb{D}} \subset \bC_{z} = \bR^2_{x,y}$ be the closed unit disc in the plane. Suppose we have a map $u: \overline{\mathbb{D} }\rightarrow M$ which is smooth except at $\pm 1$ satisfying:
\begin{enumerate}
\item $u(\partial \overline{\mathbb{D}} \cap \{ y \leq 0 \}) \subset L_1$ and $u(\partial \overline{\mathbb{D}} \cap \{ y \geq 0 \}) \subset L_2$;
\item For any $z \in \overline{\mathbb{D}}$, the Lie group $\Stab(u(z))$ contains $H$ as a subgroup.
\end{enumerate}
Choose $p = u(-1)$ as the base point. Then the symplectic vector bundle $u^{*} TM$ is isomorphic to the product bundle $T_{p}M \times \overline{\mathbb{D}}$ respecting the fiberwise symplectic $H$--action under a trivialization $\Psi$. Using $\Psi$, 
$$q_u = (u^{*}|_{\partial \overline{\mathbb{D}} \cap \{ y \leq 0 \}} TL_1, u^{*}|_{\partial \overline{\mathbb{D}} \cap \{ y \geq 0 \}} TL_2)$$
defines a path of pair of $H$--invariant Lagrangian subspaces in $T_{p}M$, after identifying $\partial \overline{\mathbb{D}} \cap \{ y \leq 0 \}$ and $\partial \overline{\mathbb{D}} \cap \{ y \geq 0 \}$ with the interval $[-1, 1]$ by projecting to the $x$--coordinate. Recall that $\mathfrak{h}$ is the Lie algebra of $H$. Because $H \subset \Stab(u(z))$ for all $z \in \overline{\mathbb{D}}$, the symplectic slice theorem implies that $T_{u(z)}M$ contains the symplectic subspace $T^{*}(\mathfrak{g} / \mathfrak{h})$, i.e. the tangent space of the cotangent bundle of $\Orb(u(z))$. The tangent spaces of $T_{u(z)} L_i, z \in \partial \mathbb{D}$ contain a copy of $\mathfrak{g} / \mathfrak{h}$ as a result of the slice theorem. Define the symplectic vector space
$$\overline{T_p M} = T_p M / T^{*}(\mathfrak{g} / \mathfrak{h})$$
with the induced symplectic structure and the path of pairs of linear Lagrangian subspaces
$$\overline{q}_u = (u^{*}|_{\partial \overline{\mathbb{D}} \cap \{ y \leq 0 \}} TL_1 / (\mathfrak{g} / \mathfrak{h}) , u^{*}|_{\partial \overline{\mathbb{D}} \cap \{ y \geq 0 \}} TL_2 / (\mathfrak{g} / \mathfrak{h}))$$
which lies inside $\overline{T_p M}$.

\begin{Definition}
\label{def_equi_mas}
Define the \emph{$H$--equivariant Maslov index} $\mu^{H}(u) \in \clR(H)$ of the map $u$ as above to be
\begin{equation}
\mu^{H}(u) := \mu^{H}(\overline{q}_u).
\end{equation}
\end{Definition}
Using the properties of the Maslov index, it is easy to see that $\mu^{H}(u)$ is independent of the trivialization $\Psi$. It is also invariant under the homotopies of $u$ which satisfy (1) and (2) above and keep both $u(-1)$ and $u(1)$ fixed.

\begin{example}
\label{ex_local_model}
Suppose $V$ is an orthogonal representation of $G$, and suppose $(M,\omega)$ is given by the cotangent bundle of $V$. Let $f$ be a smooth $G$--invariant function on $V$ with finitely many critical orbits. Furthermore, we assume that $f$ is a $G$--Morse function, i.e. the Hessian of $f$ is non-degenerate on $T_p V / T_p \Orb(p)$ if $p$ is a critical point of $f$. Let $L_1$ be the graph of the zero section of $M=T^*V$, let $L_2$ be the graph of $df$. Then it is easy to see that $L_1$ and $L_2$ intersect non-degenerately. Let $p \in V$ be critical point of $f$ such that $H \subset \Stab(p)$. Let 
$$V / T \Orb(p) = (V_1)^{\oplus a_1} \oplus \cdots \oplus (V_m)^{\oplus a_m}$$
be the isotypic decomposition of $V$ as an $H$--representation. Then for each $1 \leq i \leq m$, the Hessian of $f$ induces an invertible self-adjoint $H$--equivariant linear map
$$(V_i)^{\oplus a_i} \mapsto (V_i)^{\oplus a_i}$$
whose negative eigenspace has dimension denoted by $\dim_{\bR}V_i \cdot \ind(p, V_i)$. Define
$$\ind_{H}(p) = \sum_{i=1}^{m} \ind(p, V_i) \cdot [V_i] \in \clR(H).$$

Now consider that we have a pair of critical points $p_1$ and $p_2$ of $f$ which do not lie on the same $G$--orbit and they are both fixed under the action of $H$. Then we can find a path $\gamma$ in $V$ which is invariant under the $H$--action connecting $p_1$ and $p_2$. Note that the time-$t$ Hamiltonian flow of the function $f: T^{*}V \rightarrow \bR$ is given by
$$(v, v') \rightarrow (v, v' + t \cdot df),$$
where we extend $f: V \rightarrow \bR$ to a smooth function over $T^{*}V$ which is constant along each fiber. The trace of the image of $\gamma$ for $t \in [0,1]$ defines a map $u_{\gamma}: \overline{\mathbb{D}} \rightarrow T^{*}V$ which has a well-defined Maslov index. It could be shown that
$$\mu^{H}(u_{\gamma}) =\ind_{H}(p_2) - \ind_{H}(p_1)$$
which is a classical computation of Lagrangian Floer homology of cotangent bundles.
\end{example}

Example \ref{ex_local_model} shows that in this simple case, the non-degeneracy of critical points of a $G$--invariant smooth function is equivalent to the non-degeneracy of intersection between a closely-related pair of Lagrangians. Moreover, the equivariant Maslov index could be computed as a relative equivariant Morse index. We extend these basic correspondences in the next section to infinite dimensions which eventually allow us to interpret the generalized Casson invariants using finite-dimensional symplectic constructions.

\section{The gauge-symplectic correspondence}
\label{sec_index}
This section proves the most important technical results of this paper. Given a three manifold $Y$ equipped with a Heegaard splitting $Y = H_{1} \cup_{\Sigma} H_2$, we show that there is a bijection between gauge equivalence classes of perturbed flat $G$--connections on $Y$ and intersection $G$--orbits of Lagrangians in $\hat{\mathcal{M}}^{\mathfrak{g}}(\Sigma')$ induced by $H_1$ and $H_2$ after applying a compatible $G$--equivariant Hamiltonian isotopy. Moreover, the non-degeneracy condition from gauge-theoretic context is shown to be equivalent to the non-degeneracy condition of Lagrangian intersections. We then prove several technical results which eventually reduce the computation of spectral flows on $Y$ to the computation of Maslov indices on $\hat{\mathcal{M}}^{\mathfrak{g}}(\Sigma')$.

\subsection{Perturbations}
\label{subsec_pert}
Let $Y$ be a closed, oriented, Riemannian 3-manifold with a splitting 
$$Y=H_1\cup_{\Sigma} \big([-2,2]\times \Sigma\big) \cup_\Sigma H_2,$$
 where $H_1$ and $H_2$ are handlebodies and the genus of $\Sigma$ is $h \geq 2$. Assume further that the metric on $[-2,2]\times\Sigma$ is cylindrical. Let $\{p\} \in \Sigma$, $D^2_p$ and $\Sigma'$ be the same as in Section \ref{sec_Extended}. Throughout this section, the product principal $G$--bundle over any manifold will be denoted by $P$.
We introduce a family of cylinder functions on $Y$ supported on $[-2,2]\times \Sigma$, which are closely related to Hamiltonian perturbations on the extended moduli space $\hat{\mathcal{M}}^{\mathfrak{g}}(\Sigma')$.
 
Suppose 
 $$
 \gamma_j:(\bR/\bZ)\times(-1,1)\to \Sigma' \subset \Sigma, \qquad j=1,\cdots,m
 $$
 are smooth immersions, and suppose there exists $\epsilon>0$ such that the $\gamma_j$'s coincide on $(-\epsilon,\epsilon)\times (-1,1)$.  Let 
 $$
 h_s: G^m\to \bR, \qquad s\in[-1,1]
 $$
 be a $1$--parameter family of smooth functions on $G^m$ invariant under diagonal conjugations by $G$, such that $h_s$ vanishes when $s$ is near $-1$ and $1$. 
Let 
$$\beta:(-1,1)\to \bR$$
be a smooth, compactly supported function that integrates to $1$. Define the function $H_s:\clA(\Sigma) \mbox{ or } \clA(\Sigma') \to \bR$ by
\begin{equation}
\label{eqn_ham}
H_s(A) = \int_{-1}^1 \beta(\lambda) h_s(\rho_\lambda(A))\,d\lambda
\end{equation}
for $s\in[-1,1]$. 
It is easy to see that $H_s$ is invariant under the gauge group actions. Using the affine linear map $x \mapsto 2x - 1$ identifying $[0,1]$ and $[-1,1]$, we can use the coordinate $\hat{s}$ on $[0,1]$ to get the family $\{ H_{\hat{s}} \}_{0 \leq \hat{s} \leq 1}$ from the family $\{ H_{s} \}_{-1 \leq s \leq 1}$. This fits in with the convention set up in Section \ref{sec_Lag_int}. The Hamiltonian $H_{\hat{s}}$ descends to a $G$--invariant time-dependent Hamiltonian on the moduli space $\hat{\mathcal{M}}^{\mathfrak{g}}(\Sigma')$. The induced Hamiltonian flow preserves the set $\mu^{-1}(0) \subset \hat{\mathcal{M}}^{\mathfrak{g}}(\Sigma')$.

\begin{Lemma}
\label{lem_surj}
Every time-dependent smooth $G$--invariant Hamiltonian on $\hat{\mathcal{M}}^{\mathfrak{g}}(\Sigma')$ could be constructed in this way.
\end{Lemma}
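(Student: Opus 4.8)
The plan is to show that the cylinder-function construction in \eqref{eqn_ham} exhausts all time-dependent $G$-invariant Hamiltonians by combining two facts: first, that smooth $G$-invariant functions on $\hat{\mathcal{M}}^{\mathfrak{g}}(\Sigma')$ are, up to the obvious identifications, the same as smooth class functions on a suitable finite product of copies of $G$; and second, that holonomy around the loops $\gamma_j$ realizes enough functions on that product. Recall from Lemma \ref{lem_trivial} that $\hat{\mathcal{M}}^{\mathfrak{g}}(\Sigma')$ embeds $G$-equivariantly into $G^{2h}$ (via $((A_i,B_i),\xi)\mapsto (A_i,B_i)$) with image an open $G$-invariant subset, where $G^{2h}$ carries the diagonal conjugation action. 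So a $G$-invariant function on $\hat{\mathcal{M}}^{\mathfrak{g}}(\Sigma')$ is the restriction of a conjugation-invariant function on an open neighborhood in $G^{2h}$; conversely, by an equivariant extension/cutoff argument any smooth conjugation-invariant function on that open set extends to a smooth conjugation-invariant function on $G^{2h}$ (average a cutoff extension over $G$). This reduces the claim to: every smooth conjugation-invariant function on $G^{2h}$ arises as $A\mapsto h(\rho_{\lambda}(A))$-type data, i.e. as a function of the holonomies of $A$ around $2h$ standard loops based at $p'\in\partial\Sigma'$.

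Next I would unwind the notation: the $\rho_\lambda(A)$ appearing in \eqref{eqn_ham} should be the tuple of holonomies of $A$ along the loops $\gamma_j(\,\cdot\,,\lambda)$, exactly as in the cylinder-function formalism of Section \ref{sec_preliminary}. Choose the immersions $\gamma_1,\dots,\gamma_{2h}$ so that $\gamma_j(S^1\times\{0\})$ is freely homotopic in $\Sigma'$ (rel basepoint, after a path) to the $j$-th standard generator among $\{\alpha_i,\beta_i\}$; we may arrange that they all agree near $\lambda=0$ as required by Definition \ref{def_cyl_datum}(2), e.g. by making them coincide on a small collar of the basepoint and then splitting off to wind around the respective generators. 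Then for $\lambda$ in a fixed small interval, $\rho_\lambda(A)$ records precisely the conjugacy-equivalence data of $(\hol_{\alpha_1}(A),\hol_{\beta_1}(A),\dots)$, which by \eqref{eqn_extended} determines the point of $\hat{\mathcal{M}}^{\mathfrak{g}}(\Sigma')$ up to the based-gauge ambiguity already quotiented out. Now given any smooth $G$-invariant $\hat{H}_{\hat s}$ on $\hat{\mathcal{M}}^{\mathfrak{g}}(\Sigma')$, pull it back to a smooth conjugation-invariant $\tilde{h}_s$ on $G^{2h}$ as in the previous paragraph, set $h_s=\tilde h_s$, and take $\beta$ supported in the interval where all $\gamma_j$ coincide; then \eqref{eqn_ham} gives $H_s(A)=\int\beta(\lambda)\,h_s(\rho_\lambda(A))\,d\lambda = h_s(\hol(A))$, which descends to $\hat H_{\hat s}$ by construction. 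Finally reparametrize $s\mapsto\hat s$ by $x\mapsto 2x-1$ and note the vanishing-near-endpoints condition on $h_s$ matches the requirement that $H_{\hat s}\in C_0^\infty([0,1],C_G^\infty(\cdot))$.

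The main obstacle is the passage from ``conjugation-invariant function on an open subset of $G^{2h}$'' to ``function of holonomies along genuinely immersed loops satisfying Definition \ref{def_cyl_datum}(2)'': one must check that the standard generators of $\pi_1(\Sigma)$ can simultaneously be represented by immersed loops in $\Sigma'$ that all coincide near the basepoint, and that the induced map $A\mapsto(\hol_{\gamma_j,0}(A))_j$ really separates points of $\hat{\mathcal{M}}^{\mathfrak{g}}(\Sigma')$ — this is essentially the content of \eqref{eqn_extended}, but some care is needed because $\xi$ is determined by $\prod[A_i,B_i]$ only through $\exp$, which is why $\delta$ was taken small enough for $\exp$ to be injective. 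A secondary technical point is the smoothness and $s$-dependence: one needs the extension of $\hat H_{\hat s}$ off the open subset to be carried out smoothly in $\hat s$ and to preserve the vanishing at $\hat s=0,1$, which is routine given that the open embedding and the cutoff can be chosen $\hat s$-independently. I expect the argument to be short modulo these bookkeeping checks.
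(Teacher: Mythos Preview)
Your proposal is correct and follows essentially the same route as the paper: use the equivariant embedding of Lemma \ref{lem_trivial} into $G^{2h}$ to realize every $G$-invariant function on $\hat{\mathcal{M}}^{\mathfrak{g}}(\Sigma')$ as the restriction of a conjugation-invariant function on $G^{2h}$, then feed that function into the cylinder construction with the $\gamma_j$ chosen as thickenings of the standard generators $\{\alpha_i,\beta_i\}$. One minor slip: your support condition on $\beta$ is unnecessary and rests on a misreading of which variable the $\gamma_j$ coincide in (they coincide on $(-\epsilon,\epsilon)\times(-1,1)$, i.e.\ for \emph{all} $\lambda$); what actually makes $\int\beta(\lambda)h_s(\rho_\lambda(A))\,d\lambda=h_s(\hol(A))$ on flat $A$ is simply that $h_s(\rho_\lambda(A))$ is already $\lambda$-independent by conjugation-invariance of $h_s$.
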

\begin{proof}
Choose $\{ \alpha_i, \beta_i \}_{1 \leq i \leq h}$ to be loops with common base point in $\Sigma'$ such that they generate the fundamental group of $\Sigma'$. They could be thickened into immersions of annuli of the form described above. For any $[A] \in \hat{\mathcal{M}}^{\mathfrak{g}}(\Sigma')$, it is uniquely determined by its holonomies around $\alpha_i, \beta_i, 1 \leq i \leq h$. This is the embedding described in Lemma \ref{lem_trivial}. Thus all the $G$--invariant functions on $\hat{\mathcal{M}}^{\mathfrak{g}}(\Sigma')$ all come from restricting conjugation invariant functions on $G^{2h}$. Given a smooth $1$--parameter family of conjugation invariant functions on $G^{2h}$, denoted by $h_{\hat{s}}$, we can construct the function $H_{\hat{s}}$ using $h_{\hat{s}}$, the annuli and the cut-off function $\beta$. It is easy to see that the function on $\hat{\mathcal{M}}^{\mathfrak{g}}(\Sigma')$ induced from $H_{\hat{s}}$ is the same as $h_{\hat{s}}$. This proves the lemma.
\end{proof}

On the other hand, the maps $\{ \gamma_j \}_{1 \leq j \leq m}$ could be used to define smooth immersions
$$q_j: (\bR/\bZ)\times(-1,1)^2 \rightarrow \Sigma' \times [-1, 1] \qquad j=1,\cdots,m$$
simply by requiring that $q_j|_{\Sigma' \times \{s\}} = \gamma_j$ for all $1 \leq j \leq m$. Note that the tuple
$${\bf q} = (q_1,\cdots,q_m,\beta,h_s)$$
defines a cylinder datum on $\Sigma' \times [-1,1] \subset \Sigma \times [-1,1]$ as in Definition \ref{def_cyl_datum}. Because $h_s$ vanishes for $s$ near $\pm 1$, we see that ${\bf q}$ defines a cylinder datum on $Y$. Therefore we have the function
$$f_{\bf q}: \clC_{Y} \rightarrow \bR$$
which defines a holonomy perturbation.

\begin{Definition}
The Hamiltonian $H_{\hat{s}}:\hat{\mathcal{M}}^{\mathfrak{g}}(\Sigma') \rightarrow \bR$ and the cylinder function $f_{\bf q}: \clC_{Y} \rightarrow \bR$ constructed as above are called a \emph{compatible pair}.
\end{Definition}

Suppose $A\in \clC(Y)$ is in temporal gauge on $[-1,1]\times \Sigma$, i.e. $\iota(\frac{\partial}{\partial s})(A) = 0$ where $s$ is the coordinate on $[-1,1]$. It is well-known that every connection is gauge-equivalent to a connection in temporal gauge on $[-1,1]\times \Sigma$ in this case. Let $A_s$ be the restriction of $A$ to the slice $\{s\} \times \Sigma$.
Then $A \in \clC(Y)$ is a critical point of the perturbed Chern-Simons functional $\CS + f_{\bf q}$ if and only if
\begin{equation}
\label{eqn_flat_comple}
A \mbox{ is flat on the complement of } [-1,1]\times \Sigma,
\end{equation}
\begin{equation}
\label{eqn_flat_surface}
F_{A_s} = 0 \mbox{ for } s\in[-1,1],
\end{equation}
\begin{equation}
\label{eqn_ham_chord}
\dot A - X_s(A) = 0 \mbox{ for } s\in[-1,1],
\end{equation}
where $X_s(A)$ is a smooth map $X_s:\clA(\Sigma) \to \Omega^1(\Sigma) \otimes \mathfrak{g}$ such that
\begin{equation}
\label{eqn_vector_field}
dH_s(A) a= \int_{\Sigma}\Tr (a \wedge X_s(A)).
\end{equation}
Notice that $X_s(A)\in \Omega^1(\Sigma) \otimes \mathfrak{g}$ is supported in the union of $ \ima(\gamma_j)$.

Notice that $H_1\cup [-2,-1]\times \Sigma$ and $[1,2] \times H_2$ are manifolds with boundary both given by $\Sigma$. Let $L_1,L_2\subset \mu^{-1}(0)$ be the corresponding $G$--invariant Lagrangian submanifolds of $\hat{\mathcal{M}}^{\mathfrak{g}}(\Sigma')$ as described in Section \ref{sec_Extended}. Recall that $\Phi_{H_{\hat{s}}}$ is the time $1$--map of the Hamiltonian flow induced by $H_{\hat{s}}$. Now we are ready to establish the following set-theoretic correspondence.

\begin{Proposition}
\label{prop_set}
Let $H_{\hat{s}}$ and $f_{\bf q}$ be a compatible pair. Then there is a one-to-one correspondence between the orbits of perturbed flat connections on $Y$ with respect to $f_{\bf q}$ and the $G$--orbits of $\Phi_{H_{\hat{s}}}(L_1)\cap L_2$. Moreover, a perturbed flat connection is non-degenerate if and only if the corresponding intersection of $\Phi_{H_{\hat{s}}}(L_1)$ and $L_2$ is non-degenerate.
\end{Proposition}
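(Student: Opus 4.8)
The plan is to make the dictionary between the three pieces of the splitting explicit. Write $Y_1' := H_1 \cup ([-2,-1]\times\Sigma)$ and $Y_2' := ([1,2]\times\Sigma)\cup H_2$, and let $\pi$ be the holonomy perturbation determined by the cylinder datum $\mathbf q$. By the discussion preceding the Proposition, a connection $A$ in temporal gauge on $[-1,1]\times\Sigma$ is $\pi$--flat exactly when it satisfies \eqref{eqn_flat_comple}, \eqref{eqn_flat_surface}, \eqref{eqn_ham_chord}; that is, $A|_{Y_1'}$ and $A|_{Y_2'}$ are flat, the slices $A_s$ over the collar are flat over $\Sigma$, and $\dot A_s = X_s(A_s)$. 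The first thing I would check is that $X_s$, defined by \eqref{eqn_vector_field}, induces on $\hat\clM^{\frg}(\Sigma')$ exactly the Hamiltonian vector field of $H_{\hat s}$: since $X_s(A_s)$ is supported in $\bigcup_j\ima(\gamma_j)\subset\Sigma'$, the pairing $\int_\Sigma\Tr(a\wedge X_s(A_s))$ agrees with $\omega_{\Sigma'}(a, X_s(A_s))$; since $A_s$ is flat, $X_s(A_s)$ is $d_{A_s}$--closed and vanishes near $\partial\Sigma'$, hence represents a class in $\tilde H^{1,\frg}_{A_s}$, and by \eqref{eqn_vector_field} it is $\omega_{\Sigma'}$--dual to $dH_{\hat s}$, i.e.\ it is $X_{H_{\hat s}}$. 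As $H_{\hat s}$ is gauge--invariant, this flow preserves $\mu^{-1}(0)$ and, together with the flow of $X_s$ on $\clA_F(\Sigma)$, descends to one and the same flow on $\mu^{-1}(0)/G = \clM(\Sigma)$.

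Granting this, the forward map sends the gauge orbit of a $\pi$--flat connection $A$, put in temporal gauge, to a $G$--orbit of $\Phi_{H_{\hat s}}(L_1)\cap L_2$ as follows. Since $A|_{Y_1'}$ is flat, under the identification $\clA_F(Y_1')/\clG_0(Y_1')\cong\clA_F^{\frg}(\Sigma'|Y_1')/\clG_{\Sigma'}^c\hookrightarrow\hat\clM^{\frg}(\Sigma')$ of Section \ref{sec_Extended} it determines a point $\tilde A_{-1}\in L_1$, well defined up to the $G$--action. Let $\tilde A_s$ be the image of $\tilde A_{-1}$ under the time--$\hat s$ Hamiltonian flow of $H_{\hat s}$; by the previous paragraph $\tilde A_s$ projects to $[A_s]\in\clM(\Sigma)$ for every $s$, so $\tilde A_1$ projects to $[A_1]$ and $\tilde A_1\in\Phi_{H_{\hat s}}(L_1)$. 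Because $A|_{Y_2'}$ is flat, $[A_1]$ extends over $H_2$; hence its entire $G$--orbit in $\mu^{-1}(0)$ -- which is the fiber of $\mu^{-1}(0)\to\clM(\Sigma)$ over $[A_1]$ and contains $\tilde A_1$ -- lies in the $G$--invariant Lagrangian $L_2$, so $\tilde A_1\in\Phi_{H_{\hat s}}(L_1)\cap L_2$. Two temporal--gauge representatives of $A$ differ by an $s$--independent gauge transformation on the collar, whose effect on $\tilde A_{-1}$ is through the $G$--action on $\hat\clM^{\frg}(\Sigma')$; together with the ambiguity in the lift $\tilde A_{-1}$, this only moves $\tilde A_1$ within its $G$--orbit, so the map is well defined. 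For the inverse, given $p\in\Phi_{H_{\hat s}}(L_1)\cap L_2$ set $p_0:=\Phi_{H_{\hat s}}^{-1}(p)\in L_1$ and let $\tilde A_s$ be the time--$\hat s$ flow of $p_0$; each $\tilde A_s\in\mu^{-1}(0)$, projecting to a path $[A_s]$ of flat connections on $\Sigma$. Solving $\dot A_s=X_s(A_s)$ over the collar with $A_{-1}$ the $\Sigma$--restriction of a flat connection on $Y_1'$ representing $p_0$, and gluing in flat connections on $Y_1'$ and $Y_2'$ representing $p_0$ and $p$, produces -- after the routine temporal--gauge bookkeeping familiar from \cite{dostoglou1994cauchy, manolescu2012floer} -- a connection on $Y$ satisfying \eqref{eqn_flat_comple}--\eqref{eqn_ham_chord}, i.e.\ a $\pi$--flat connection, and one checks that the two assignments are mutually inverse on orbits; the compatibility of the gauge fixings involved is standard and I would not dwell on it.

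It remains to match the non--degeneracy conditions. Linearizing $*F_A+V_\pi(A)=0$ and restricting to $\ker d_A^*$, the kernel of $\Hess_{A,\pi}=*d_A+DV_\pi(A)$ is the first cohomology of the perturbed deformation complex of $A$ on $Y$, a $\Stab(A)$--representation. A Mayer--Vietoris argument for this complex over $Y=Y_1'\cup([-1,1]\times\Sigma)\cup Y_2'$ -- using the identification of $TL_i$ and of $T\hat\clM^{\frg}(\Sigma')$ with twisted harmonic forms from Section \ref{sec_Extended}, in particular Corollary \ref{cor_slice}, and the interpretation of the collar as the linearized Hamiltonian flow of the previous paragraphs -- identifies it, $\Stab(A)$--equivariantly, with $\big(T_{\tilde A_1}\Phi_{H_{\hat s}}(L_1)\cap T_{\tilde A_1}L_2\big)/T_{\tilde A_1}\Orb(\tilde A_1)$: the extra summand carried by each $TL_i$ inside the extended moduli space is exactly the base directions $\frg/H_A^0\subset T^*(\frg/H_A^0)$ in the splitting of Corollary \ref{cor_slice}, which equal $T_{\tilde A_1}\Orb(\tilde A_1)$ and always lie in $T_{\tilde A_1}\Phi_{H_{\hat s}}(L_1)\cap T_{\tilde A_1}L_2$. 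Hence $\Hess_{A,\pi}$ is an isomorphism precisely when $T_{\tilde A_1}\Phi_{H_{\hat s}}(L_1)\cap T_{\tilde A_1}L_2=T_{\tilde A_1}\Orb(\tilde A_1)$, i.e.\ when $\Phi_{H_{\hat s}}(L_1)$ and $L_2$ meet non--degenerately at $\tilde A_1$ in the sense of Definition \ref{def_Lag_non-deg}. I expect this cohomological gluing -- carefully tracking the $T^*(\frg/H_A^0)$ factor and the $\Stab(A)$--action -- to be the main obstacle; the set--theoretic part is in essence the temporal--gauge/adiabatic dictionary of Dostoglou--Salamon and Manolescu--Woodward.
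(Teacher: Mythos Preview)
Your set-theoretic bijection is essentially the paper's argument: both translate the equations \eqref{eqn_flat_comple}--\eqref{eqn_ham_chord} into the statement that $[A_s]$ is a Hamiltonian chord of $H_{\hat s}$ joining $L_1$ to $L_2$, with the inverse obtained by lifting a chord to $\clA_F^0(\Sigma)$ and absorbing the discrepancy $\dot A_s - X_s(A_s)\in\ima d_{A_s}$ into a gauge transformation. Your preliminary check that $X_s$ represents $X_{H_{\hat s}}$ in $\tilde H^{1,\frg}_{A_s}$ is more explicit than the paper, which takes this as immediate from \eqref{eqn_vector_field} and the description of $\omega_{\Sigma'}$.

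For the non-degeneracy statement the routes diverge. You propose a Mayer--Vietoris identification of $\ker\Hess_{A,\pi}$ with $\big(T\Phi_{H_{\hat s}}(L_1)\cap TL_2\big)/T\Orb$, and flag the bookkeeping of the $T^*(\frg/H_A^0)$ factor as the main obstacle. The paper sidesteps this by arguing the two implications separately and elementarily: given $v\in\ker\Hess_{A,\pi}$, restrict to $\{\pm1\}\times\Sigma$ to get harmonic forms $v_{\pm1}\in H^1_{A_{\pm1}}$, observe via Corollary \ref{cor_slice} that these land in $T_{[A_{\pm1}]}L_{1,2}$ orthogonal to the orbit directions, and linearize \eqref{eqn_ham_chord} to get $(\Phi_{H_{\hat s}})_*v_{-1}=v_1$; conversely, given a degenerate tangent vector pair, lift to $\Omega^1(\Sigma)\otimes\frg$, extend over $Y$ to some $w\in\ker(*d_A+DV_\pi(A))\setminus\ima d_A$, and project $w$ to $\ker d_A^*$ to produce a nonzero element of $\ker\Hess_{A,\pi}$. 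This direct construction never needs the full exact sequence or a careful accounting of the cotangent-fiber directions --- it only uses that the orbit directions in $TL_i$ sit in the $\frg/H_A^0$ summand of \eqref{eqn_harmonic} --- so it is both shorter and avoids exactly the difficulty you anticipated. Your approach would also work and gives a cleaner $\Stab(A)$--equivariant \emph{isomorphism} rather than just an equivalence of vanishing, but at the cost of the gluing you identified.
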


\begin{proof}
Throughout the proof, we write $\Phi_{H_s} = \Phi_{H_{\hat{s}}}$ and work with the $s$--coordinate.

	Suppose $A_s$ with $-1 \leq s \leq 1$ is a smooth 1-parameter family in $\clA(\Sigma)$ that satisfies equation \eqref{eqn_flat_surface} and equation \eqref{eqn_ham_chord}.
Because $A_s$ is flat, without loss of generality we can assume that $A_s |_{D^2_p}$ is given by the product connection after applying a gauge transformation. Then the image $[A_s] \in \hat{\mathcal{M}}^{\mathfrak{g}}(\Sigma')$ defines a smooth path.
Recall that $H_s$ is a family of smooth Hamiltonian functions on $\hat{\mathcal{M}}^{\mathfrak{g}}(\Sigma')$. Then $[A_s]$ is a Hamiltonian chord of $H_s$ by equation \eqref{eqn_vector_field} and the description of the symplectic form $\omega_{\Sigma'}$. Now if $A$ is an $f_{\bf q}$--perturbed flat connection, by equation \eqref{eqn_flat_comple} $[A_{-1}]$ (resp. $[A_1]$) actually lies in $L_1$ (resp. $L_2$). By equivariance we construct a $G$--orbit in $\Phi_{H_s}(L_1)\cap L_2$.

	On the other hand, suppose $[A_s]$ is a Hamiltonian chord of $H_s$ on $\hat{\mathcal{M}}^{\mathfrak{g}}(\Sigma') \cap \mu^{-1}(0)$. Let $A_s$ be a lift of $[A_s]$ to $\clA^0_F(\Sigma)$, namely a family of flat connections on $\Sigma$ which are trivial over the disc $D_p^2$, then we have
	$$
	\int_\Sigma \Tr(\dot A_s - X_s(A_s), a) = 0
	$$
	for any $s$ and any $a \in \Omega^1(\Sigma) \otimes \mathfrak{g}$ such that $d_{A_s} a=0$. Therefore there exists $\phi_s$ such that
	$$
	\dot A_s - X_s(A_s) = d_{A_s}\phi_s.
	$$ 
	Hence $A_s$ solves the equations \eqref{eqn_flat_surface} and \eqref{eqn_ham_chord} after a 1-parameter family of gauge transformations. The Lagrangian boundary condition lifts to \eqref{eqn_flat_comple}.
	This proves the first part of the statement.
	
	For the second part, suppose $A \in \clC(Y)$ is a degenerate perturbed flat connection, then there exists $v \in \ker d^*_A \backslash \{0\}$ such that $K_{A,f_{\bf q}} v = 0$. We have $[A_{-1}] \in L_1$, $[A_{1}] \in L_2$, and $\Phi_{H_s}(\bar A_{-1}) = A_{1}$.   Restricting $v$ to $\{\pm 1\}\times\Sigma$ defines twisted harmonic $1$--forms in $H^{1}_{A_{\pm1}}$ denoted by $v_{\pm 1}$. Under the identification 
$$ T_{[A_{\pm 1}]} \hat{\mathcal{M}}^{\mathfrak{g}}(\Sigma') \cong H_{A_{\pm 1}}^1 \oplus T^*(\mathfrak{g}/ H_{A_{\pm 1}}^0) $$
from Corollary \ref{cor_slice}, it is easy to see that $v_{-1}$ and $v_1$ define tangent vectors in $T_{[A_1]} L_1$ and $T_{[A_1]} L_2$ which are orthogonal to the directions corresponding to the $G$--orbits respectively. By linearizing equation \eqref{eqn_ham_chord} and integrate along the $s$--direction, we see that $(\Phi_{H_s})_{*}(v_{-1}) = v_1$. Therefore the intersection of $\Phi_{H_s}(L_1)$ and $L_2$ is degenerate at $[A_1]$.
	
	On the other hand, suppose the intersection of $\Phi_{H_s}(L_1)$ and $L_2$ is degenerate, then we obtain a Hamiltonian chord $[A_s]$ of $H_s$ connecting $L_1$ and $L_2$ and there exists $v_{\pm 1}\in T_{[A_{\pm 1}]} L_{\pm 1}$ such that $v_{\pm 1} \notin T\Orb([A_{\pm 1}])$ and $(\Phi_{H_s})_{*}(v_{-1}) = v_{+1}$. Therefore, we can identify $v_{\pm 1}$ as elements in $H_{A_{\pm 1}}$. Lift $v_{\pm 1}$ to $w_{\pm 1} \in \Omega^1(\Sigma) \otimes \frg$, then $w_\pm$ extends to $w \in \Omega^1(Y) \otimes \mathfrak{g}$ such that $w \in \ker ( *d_A + dV_{f_{\bf q}}(A) )$ and $w\notin \ima d_A$. Decompose $w$ as $w = d_A u + w'$ where $d_A^* w' = 0$, we have $w' \in \Hess K_{A,f_{\bf q}}$, therefore $A$ is degenerate as a perturbed flat connection.
\end{proof}

Using Corollary \ref{cor_equi_trans}, Lemma \ref{lem_surj} and Proposition \ref{prop_set}, we conclude

\begin{Proposition}
\label{prop_trans_equal}
For a generic choice of small Hamiltonian $H_{\hat{s}}: \clA(\Sigma) \rightarrow \bR$ constructed as above, the intersection between $G$--equivariant Lagrangians $\Phi_{H_{\hat{s}}}(L_1)$ and $L_2$ is non-degenerate. Accordingly, the perturbation of the Chern-Simons functional $\CS$ on Y induced by $f_{\bf q}$ constructed as above is non-degenerate. \qed
\end{Proposition}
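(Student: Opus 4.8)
The plan is to obtain the statement as a formal consequence of three results already in hand: the abstract equivariant transversality theorem for Lagrangians (Corollary~\ref{cor_equi_trans}), the fact that every $G$--invariant Hamiltonian on the extended moduli space is realized by the cylinder construction \eqref{eqn_ham} (Lemma~\ref{lem_surj}), and the equivalence of the two non-degeneracy conditions for a compatible pair (Proposition~\ref{prop_set}).

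First I would fix the ambient data and the perturbation space. Here the symplectic manifold of Section~\ref{sec_Lag_int} is $M = \hat{\clM}^{\frg}(\Sigma')$ with its $G$--action, and $L_1, L_2$ are the two compact $G$--invariant Lagrangians attached to $H_1 \cup [-2,-1]\times\Sigma$ and $[1,2]\times\Sigma \cup H_2$; since $L_1$ and $L_2$ are compact, $L_1 \cap L_2$ lies in a compact subset of $M$, so the hypotheses of Corollary~\ref{cor_equi_trans} are met. The key point is that the Hamiltonians $H_{\hat s}$ built from cylinder data in \eqref{eqn_ham} descend, by Lemma~\ref{lem_surj}, onto \emph{all} time-dependent $G$--invariant Hamiltonians on $M$ --- the induced Hamiltonian flow on $M$ depends only on the descended function (via the embedding of Lemma~\ref{lem_trivial}), not on the ambient cylinder data. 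Consequently the Banach space $\clH$ and the small ball $\clH_0$ of Section~\ref{sec_Lag_int} may be taken to consist entirely of perturbations of the form \eqref{eqn_ham}: one simply chooses the countable dense subset defining $\clH$ inside this class, which is possible because the space of $1$--parameter families of conjugation-invariant functions on $G^{2h}$ is separable. Corollary~\ref{cor_equi_trans} then gives, for a generic $H_{\hat s}\in\clH_0$ of this form, that $\Phi_{H_{\hat s}}(L_1)$ and $L_2$ intersect non-degenerately.

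Second, for such a generic $H_{\hat s}$, let $f_{\bf q}$ be the cylinder function on $Y$ built from the same immersions, cutoff, and family $h_s$, so that $(H_{\hat s}, f_{\bf q})$ is a compatible pair. Proposition~\ref{prop_set} identifies the gauge orbits of $f_{\bf q}$--perturbed flat connections on $Y$ with the $G$--orbits of $\Phi_{H_{\hat s}}(L_1)\cap L_2$ and asserts that a perturbed flat connection is non-degenerate precisely when the corresponding intersection orbit is non-degenerate. Since every intersection orbit is non-degenerate, every $f_{\bf q}$--perturbed flat connection is non-degenerate; that is, $f_{\bf q}$ is a non-degenerate holonomy perturbation, which is the second assertion.

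The only real obstacle is the bookkeeping in the first step: reconciling the abstract perturbation space $\clH$ of Section~\ref{sec_Lag_int}, defined as the completion of an arbitrary countable dense subset of $C_0^\infty([0,1],C_G^\infty(M))$, with the concrete family \eqref{eqn_ham}, so that the Baire-category conclusion may be drawn inside the latter class. This is exactly what Lemma~\ref{lem_surj} is designed to handle --- it provides surjectivity at the level of functions on $M$ --- and once one records that the time-$1$ flow $\Phi_{H_{\hat s}}$ is unchanged under modifications of the cylinder data that leave the induced function on $M$ fixed, nothing further is needed; the remainder is a direct citation.
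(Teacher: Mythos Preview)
Your proposal is correct and follows exactly the approach the paper takes: the paper's own proof is a one-line citation of Corollary~\ref{cor_equi_trans}, Lemma~\ref{lem_surj}, and Proposition~\ref{prop_set}, and you have simply spelled out the bookkeeping behind that citation. Your discussion of how Lemma~\ref{lem_surj} lets one realize the abstract Banach space $\clH$ inside the concrete class \eqref{eqn_ham} is more explicit than what the paper records, but this is the intended logic.
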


\begin{remark}
The above statement shows that we can obtain equivariant transversality of $\CS$ by a perturbation supported in the neck region $[-1, 1] \times \Sigma$ instead of using loops wiggling all over the $3$--manifold $Y$. Of course, this would not be a surprise because the inclusion $\Sigma \hookrightarrow Y$ defines a surjection between fundamental groups so a generic perturbation of the character variety of $Y$ should result form a generic perturbation of it as a subset in the character variety of $\Sigma$.
\end{remark}

\subsection{Spectral flow comparison}
\label{subsec_spec_comp}

Let $Y=H_1\cup_{\Sigma} \big([-2,2]\times \Sigma\big) \cup_\Sigma H_2$ and $P$ be the same as before. Suppose $H$ is a closed subgroup of the structural group $G$. We choose a generic Hamiltonian and the corresponding compatible holonomy perturbation $f_{\bf q}$ as from Proposition \ref{prop_trans_equal}. Suppose $B_0, B_1 \in \clC(Y)$ are two critical points of $\CS + f_{\bf q}$ such that $ H = \Stab(B_1) \subset \Stab(B_0)$ and let $A_{s,0}, A_{s,1}$ be the corresponding path in $\clA(\Sigma)$ satisfying equations \eqref{eqn_flat_surface} and \eqref{eqn_ham_chord}. Let us further assume that we actually have a smooth $2$--parameter family
$$A_{s,t} \in \clA(\Sigma), \quad (s,t) \in [-1,1] \times [0,1]$$
connecting the $1$--parameter families $A_{s,0}$ and $A_{s,1}$ such that the following holds:
\begin{equation}
F_{A_{s,t}} = 0 \mbox{ for } (s,t) \in [-1,1] \times [0,1];
\end{equation}
\begin{equation}
\label{eqn_extension}
\begin{aligned}
\mbox{For all } t \in [0,1],  A_{-1, t} \mbox{ and } A_{1,t} \mbox{ extend to flat connections on} \\ 
\mbox{$H_1 \cup [-2,-1] \times \Sigma$ and $[1,2] \times \Sigma \cup H_2$ respectively.}
\end{aligned}
\end{equation}
Using the above two conditions, we can construct a $1$--parameter family of connections $\{B_t\}_{0 \leq t \leq 1}$ on $Y$ connecting $B_0$ and $B_1$ which are in temporal gauge on $[-1,1] \times \Sigma$ by extending $A_{s,t}$. We further assume that
\begin{equation}
H \subset \Stab(B_t) \mbox{ for all } t \in [0,1].
\end{equation}
Then we have a $1$--parameter family of self-adjoint operators $K_{B_t, f_{\bf q}}$ and it has a well-defined $H$--equivariant spectral flow $Sf_{H}(K_{B_t, f_{\bf q}})$ as in Definition \ref{def_equi_spectral_flow}. By applying a homotopy and reparametrization, we can assume that for all $t \in [0,1]$, the connection $B_t$ is in temporal gauge on $[-2, 2] \times \Sigma$ which defines the family $\{ A_{s,t} \}_{-2 \leq s \leq 2} \subset \clA_{\Sigma}$ and this family is constant (i.e. independent of $s$) over $[-2,1] \times \Sigma$ and $[1,2] \times \Sigma$.

On the other hand, we can consider the space 
$$\Omega^1(\Sigma) \otimes \mathfrak{g} \oplus  \Omega^0(\Sigma)  \otimes \mathfrak{g} \oplus  \Omega^2(\Sigma)  \otimes \mathfrak{g}.$$ It comes with a natural symplectic form
$$\tilde{\omega}_{\Sigma}: ((a_1, \phi_1, \psi_1),(a_2, \phi_2, \psi_2)) \mapsto \int_{\Sigma} \Tr(a_1 \wedge a_2) + \int_{\Sigma} \Tr(\phi_1 \wedge \psi_2) - \int_{\Sigma} \Tr(\psi_1 \wedge \phi_2)$$ 
and a compatible complex structure $\tilde{J}_{\Sigma}$ given by
$$(a, \phi, \psi) \mapsto (*a, -*\psi, *\phi).$$
Given $A \in \clA_{F}(\Sigma)$, we define $S_A$ to be the \emph{twisted de Rham operator} on $\Omega^1(\Sigma) \otimes \mathfrak{g} \oplus  \Omega^0(\Sigma)  \otimes \mathfrak{g} \oplus  \Omega^2(\Sigma)  \otimes \mathfrak{g}$ given by
$$(a, \phi, \psi) \mapsto (-*d_A \phi - d_A * \psi, *d_A a, d_A * a).$$
This is a self-adjoint operator with respect to the natural $L^2$--norm.
Now consider the space $$L_k^2([-1,1]; \Omega^1(\Sigma) \otimes \mathfrak{g} \oplus  \Omega^0(\Sigma)  \otimes \mathfrak{g} \oplus  \Omega^2(\Sigma)  \otimes \mathfrak{g})$$
which is the $L_k^2$--completion of the space of smooth maps from $[-1,1]$ to $\Omega^0(\Sigma) \otimes \mathfrak{g} \oplus  \Omega^1(\Sigma)  \otimes \mathfrak{g} \oplus  \Omega^2(\Sigma)  \otimes \mathfrak{g}$ using the cylindrical metric on $[-1,1] \times \Sigma$. Note that this space could be identified with 
$$L_k^2(\Omega^0([-1,1] \times \Sigma) \otimes \mathfrak{g}) \oplus L_k^2(\Omega^1([-1,1] \times \Sigma)  \otimes \mathfrak{g})$$
on $[-1,1] \times \Sigma$ by mapping $(a, \phi, \psi) \in L_k^2([-1,1]; \Omega^1(\Sigma) \otimes \mathfrak{g} \oplus  \Omega^0(\Sigma)  \otimes \mathfrak{g} \oplus  \Omega^2(\Sigma)  \otimes \mathfrak{g})$ to $(\phi, a + *\psi ds)$. Because the holonomy perturbation $f_{\bf q}$ is supported on $[-1,1] \times \Sigma$, for each $t \in [0,1]$, the operator $K_{B_t, f_{\bf q}}$ is equal to the twisted odd signature operator
\begin{equation}
\label{eqn_twisted_sign}
\begin{pmatrix}
 	*d_{B_t} & d_{B_t} \\
 	d_{B_t}^* & 0
 \end{pmatrix}
\end{equation}
on $(\Omega^0(Y) \oplus \Omega^1(Y)) \otimes \mathfrak{g}$ in the complement of $[-1,1] \times \Sigma$. The restrictions of this operator on $[-2,-1] \times \Sigma$ and $[1,2] \times \Sigma$ are all of the form 
$$* (ds \wedge \frac{\partial}{\partial s} + S_{A_{\mp 1, t}})$$
under the identification described above. Let $\Lambda_{-1}(t) \subset L^2(\Omega^1(\Sigma) \otimes \mathfrak{g} \oplus  \Omega^0(\Sigma)  \otimes \mathfrak{g} \oplus  \Omega^2(\Sigma)  \otimes \mathfrak{g})$ be the subspace consisting of elements that extend to $L^{2}_{1/2}$ elements in $\ker(K_{B_t, f_{\bf q}})$ on $H_1 \cup [-2,-1] \times \Sigma$ and let $\Lambda_{1}(t) \subset L^2(\Omega^1(\Sigma) \otimes \mathfrak{g} \oplus  \Omega^0(\Sigma)  \otimes \mathfrak{g} \oplus  \Omega^2(\Sigma)  \otimes \mathfrak{g})$ be the similar subspace constructed from $[1,2] \times \Sigma \cup H_2$. Then it is well-known that $\Lambda_{\pm 1}(t)$ defines a pair of \emph{Lagrangian subspaces} in $L^2(\Omega^1(\Sigma) \otimes \mathfrak{g} \oplus  \Omega^0(\Sigma)  \otimes \mathfrak{g} \oplus  \Omega^2(\Sigma)  \otimes \mathfrak{g})$ with respect to the symplectic form $\tilde{\omega}_{\Sigma}$, see \cite[Section 2]{nicolaescu1995maslov} for example. The paths $t \mapsto \Lambda_{\pm}(t)$ are continuously differentiable in the sense that the paths of orthogonal projection operators $P_{\Lambda_{\pm}(t)}$ onto $\Lambda_{\pm}(t)$ are continuously differentiable.

\begin{Definition}
Let $L_1^2([-1,1] \times \Sigma; \Lambda_{-1}(t), \Lambda_{1}(t))$ be the subspace of $L_1^2([-1,1]; \Omega^1(\Sigma) \otimes \mathfrak{g} \oplus  \Omega^0(\Sigma)  \otimes \mathfrak{g} \oplus  \Omega^2(\Sigma)  \otimes \mathfrak{g})$ consisting of elements whose restriction to $\{ \pm 1 \} \times \Sigma$ lies in the space $\Lambda_{\pm 1}(t)$. Let $A_{s,t} \in \clA(\Sigma)$ be the family described above. Define
$$D(t): L_1^2([-1,1] \times \Sigma; \Lambda_{-1}(t), \Lambda_{1}(t)) \rightarrow L^2([-1,1]; \Omega^1(\Sigma) \otimes \mathfrak{g} \oplus  \Omega^0(\Sigma)  \otimes \mathfrak{g} \oplus  \Omega^2(\Sigma)  \otimes \mathfrak{g})$$
to be the operator given by
 $$
 D(t)
 \begin{pmatrix}
  a  \\
  \phi \\
  \psi
 \end{pmatrix}
 = 
 \begin{pmatrix}
* \dot a + d_{A_{s,t}}\phi + d_{A_{s,t}}^*\psi - *dX_s(A_{s,t})(a)\\
- * \dot \psi + d^*_{A_{s,t}} a\\
* \dot \phi + d_{A_{s,t}} a
 \end{pmatrix},
 $$
where the dot represents taking derivative along the $s$--direction.
\end{Definition}

\begin{remark}
The operator $D(t)$ could be written as $\tilde{J}_{\Sigma} (\frac{\partial}{\partial s} + S_{A_{s,t}} - dX_s(A_{s,t}))$. Using the Lagrangian boundary conditions $\Lambda_{\pm 1}(t)$, it is easy to see that $D(t)$ defines a self-adjoint operator such that the inclusion map from its domain to the background Hilbert space is a compact operator. Meanwhile, $D(t)$ is exactly the restriction of the operator $K_{B_t, {\bf q}}$ on the region $[-1,1] \times \Sigma \subset Y$.
\end{remark}

As a consequence, we obtain a $1$--parameter family of self-adjoint operators $\{ D(t) \}_{0 \leq t \leq 1}$. Although this family has varying domains $L_1^2([-1,1] \times \Sigma; \Lambda_{-1}(t), \Lambda_{1}(t))$, its spectral flow is nonetheless well-defined in this situation, see \cite[Appendix A]{salamon2008instanton}. Note that all the discussions above are compatible with the $\Stab(B_t)$--action, therefore the $H$--equivariant spectral flow $Sf_{H}(D(t))$ of the family $\{ D(t) \}_{0 \leq t \leq 1}$ is also well-defined. Now we are ready to state the main result of this subsection.

\begin{Theorem}
\label{thm_compare}
Let $\{ K_{B_t, f_{\bf q}} \}_{0 \leq t \leq 1}$ and $\{ D(t) \}_{0 \leq t \leq 1}$ be as above. Then
$$Sf_{H}(K_{B_t, f_{\bf q}}) = Sf_{H}(D(t)).$$
\end{Theorem}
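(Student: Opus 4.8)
The plan is to obtain Theorem~\ref{thm_compare} from Nicolaescu's splitting formula for spectral flow in \cite{nicolaescu1995maslov}, applied to the decomposition of $Y$ along the two hypersurfaces $\{-1\}\times\Sigma$ and $\{1\}\times\Sigma$, and carried out one $H$-isotypic summand at a time so that Definition~\ref{def_equi_spectral_flow} reassembles the two sides in $\clR(H)$.

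First I would record the structural facts that make the splitting applicable. Since the perturbation $f_{\bf q}$ is supported in the interior of the neck $[-1,1]\times\Sigma$, on the complement the operator $K_{B_t, f_{\bf q}}$ is exactly the twisted odd signature operator \eqref{eqn_twisted_sign}, which is of Dirac type; and after the homotopy and reparametrization arranged in the setup, the connection $A_{s,t}$ is independent of $s$ on each collar $[-2,-1]\times\Sigma$ and $[1,2]\times\Sigma$, so $K_{B_t, f_{\bf q}}$ has the product form $*(ds\wedge\tfrac{\partial}{\partial s}+S_{A_{\mp1,t}})$ near each cut. The Cauchy data spaces of the two caps $H_1\cup[-2,-1]\times\Sigma$ and $[1,2]\times\Sigma\cup H_2$ — the restrictions to $\Sigma$ of the $L^2_{1/2}$ solutions of $K_{B_t, f_{\bf q}}$ on these caps — are by definition $\Lambda_{-1}(t)$ and $\Lambda_{1}(t)$, infinite-dimensional Fredholm Lagrangians in $L^2(\Omega^1(\Sigma)\otimes\frg\oplus\Omega^0(\Sigma)\otimes\frg\oplus\Omega^2(\Sigma)\otimes\frg)$ depending continuously differentiably on $t$. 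All of this data is $\Stab(B_t)$-equivariant, hence $H$-equivariant.

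Next I would apply the splitting. Cutting $Y$ along $\{-1\}\times\Sigma$ expresses $Y$ as $M_-\cup(N\cup M_+)$ with $N=[-1,1]\times\Sigma$, and Nicolaescu's theorem identifies $Sf_{H}(K_{B_t, f_{\bf q}})$ with the $H$-equivariant Maslov index of the path of pairs formed by the Cauchy data Lagrangians of the two pieces; the Cauchy data space of $N\cup M_+$ along $\{-1\}\times\Sigma$ is $\Lambda_1(t)$ transported to the $\{-1\}$-slice along the operator $\tilde{J}_\Sigma(\tfrac{\partial}{\partial s}+S_{A_{s,t}}-dX_s(A_{s,t}))$, so this Maslov index equals $\mu^{H}(\Lambda_{-1}(t),\widetilde\Lambda(t))$ with $\widetilde\Lambda(t)$ that transported Lagrangian. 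On the other hand $D(t)$ is precisely the cylinder operator $\tilde{J}_\Sigma(\tfrac{\partial}{\partial s}+S_{A_{s,t}}-dX_s(A_{s,t}))$ on $N$ with boundary conditions $(\Lambda_{-1}(t),\Lambda_1(t))$, and the standard identification of the spectral flow of such a family with the Maslov index of its boundary Lagrangians (computed isotype by isotype, in the spirit of Proposition~\ref{prop_maslov_spectral}) gives $Sf_{H}(D(t))=\mu^{H}(\Lambda_{-1}(t),\widetilde\Lambda(t))$ as well. Comparing the two expressions yields $Sf_{H}(K_{B_t, f_{\bf q}})=Sf_{H}(D(t))$.

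The main obstacle is the rigorous justification of Nicolaescu's splitting in this equivariant and perturbed context: one must verify that the zeroth-order perturbation $-dX_s(A_{s,t})$, which is smoothing, supported away from the cuts, and $H$-equivariant, does not disturb the Fredholm–Lagrangian set-up; that the merely continuously differentiable dependence of $\Lambda_{\pm1}(t)$ on $t$ suffices for the infinite-dimensional Maslov index and its additivity under cutting to be well defined; and that every step descends compatibly to the finite-dimensional spaces $\Hom_{H}(W,\,\cdot\,)$ for $W\in\clR^{irr}(H)$, so that re-summing over $W$ reproduces the stated identity in $\clR(H)$. A secondary but genuine point is to fix the orientation and transport conventions so that no correction term survives in the final comparison.
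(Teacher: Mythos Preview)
Your strategy and the paper's diverge substantially. You propose to pass through an intermediate infinite-dimensional Maslov index, invoking Nicolaescu's splitting formula as a black box: identify $Sf_H(K_{B_t,f_{\bf q}})$ with $\mu^H(\Lambda_{-1}(t),\widetilde\Lambda(t))$ via the splitting theorem, then identify $Sf_H(D(t))$ with the same Maslov index via the spectral-flow/Maslov-index correspondence for the cylinder operator. The paper instead compares the two spectral flows \emph{directly}, without ever introducing the Maslov index as an intermediary. After reducing to a single isotypic piece, it (i) proves unique continuation for $K^i_{B_t,f_{\bf q}}$ by combining the Dirac-operator property on the caps with the ODE structure on the neck; (ii) homotopes the path to a concatenation of a positive and a negative family; (iii) perturbs by zeroth-order terms supported away from the neck so that all kernels are one-dimensional; and (iv) shows that the induced families $\tilde K^i_\pm(t)$ and $\tilde D^i_\pm(t)$ have identical resonance sets and that their resonance matrices share the same sign, the last point established by a further homotopy using a cutoff $\chi$ supported entirely in $(0,1)\times\Sigma$, which makes the resonance matrices literally equal.

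What each approach buys: the paper's argument is elementary and self-contained---it never needs the full Fredholm--Lagrangian machinery, only the notion of resonance matrix and Kato's selection theorem---and the unique continuation lemma handles the non-Dirac perturbation cleanly. Your approach is more conceptual and would, if it went through, explain \emph{why} the two spectral flows agree (both equal the same Maslov index). But the obstacle you flag is real and not merely bookkeeping: Nicolaescu's theorem in \cite{nicolaescu1995maslov} is formulated for Dirac operators, and on the piece $N\cup M_+$ your operator is not of Dirac type. Establishing that the Cauchy data space of that piece is a Fredholm Lagrangian varying continuously in $t$, and that the splitting formula holds with such a perturbation, essentially requires reproving unique continuation and the positivity/negativity reduction---i.e., much of what the paper does anyway. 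Likewise, your step~2 appeals to an infinite-dimensional analogue of Proposition~\ref{prop_maslov_spectral} for the perturbed cylinder operator with moving Lagrangian boundary conditions; this is correct in spirit but is itself a theorem of roughly the same depth as what is being proved. So your outline is not wrong, but its ``main obstacle'' is where the actual content lies, and the paper's route avoids it by never leaving the world of self-adjoint operators.
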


The proof of Theorem \ref{thm_compare} is based on adapting the arguments in \cite{nicolaescu1995maslov}. The idea is to modify the path of self-adjoint operators to a preferred form to simplify the calculation of the spectral flow.

We start by reducing the calculation of the equivariant spectral flow to the calculation of the ordinary spectral flow. Identifying $(\Omega^0(Y) \oplus \Omega^1(Y)) \otimes \mathfrak{g}$ with the space of smooth sections of the vector bundle $\mathfrak{g} \oplus T^*Y \otimes \mathfrak{g}$, where $\mathfrak{g}$ stands for the product $\mathfrak{g}$--bundle over $Y$, for any connection $B \in \clC(Y)$, its stabilizer $\Stab(B) \subset G$ acts on $(\Omega^0(Y) \oplus \Omega^1(Y)) \otimes \mathfrak{g}$ by the fiberwise adjoint $G$--action on the $\mathfrak{g}$ factor. Choose $H \subset G$ as above. Suppose the adjoint action of $H$ on $\mathfrak{g}$ decomposes $\mathfrak{g}$ into isotypic pieces
$$\mathfrak{g} \cong V_{1}^{\oplus a_1} \oplus \cdots \oplus V_{m}^{\oplus a_m},$$
we define the vector bundle
$$\tilde{E}_{i} = (\Omega^0(Y) \oplus \Omega^1(Y)) \otimes V_{i}^{\oplus a_i}$$
and the vector bundle
$$E_i =(\Omega^0(Y) \oplus \Omega^1(Y)) \otimes \bK_i^{a_i}, $$
where $\bK_i = \Hom_{H}(V_i, V_i)$. Then the global sections of $E_i$ could be identified with 
$$\Hom_{H}(V_i, (\Omega^0(Y) \oplus \Omega^1(Y)) \otimes \mathfrak{g}).$$
Let $L^2_k(E_i)$ be the $L^2_k$--completion of the space of smooth sections of $E_i$ over $Y$. Then this encodes the information of the $V_i$-isotypic piece of the $H$--Banach space $L_k^2(\mathfrak{g} \oplus T^*Y \otimes \mathfrak{g})$. By the $H$--equivariance of the operator $K_{B_t, f_{\bf q}}$, it induces an operator
$$K^i_{B_t, f_{\bf q}} = L^2_k(E_i) \rightarrow L^2_{k-1}(E_i).$$

Correspondingly, we have
$$\Hom_{H}(V_i, L_k^2(\Omega^0([-1,1] \times \Sigma) \otimes \mathfrak{g}) \oplus L_k^2(\Omega^1([-1,1] \times \Sigma)  \otimes \mathfrak{g})) \cong L^2_k(E_{i}|_{[-1,1] \times \Sigma})$$
and the spaces over the Riemann surface $\Sigma$ given by
$$\Hom_{H}(V_i, L_k^2(\Omega^1(\Sigma) \otimes \mathfrak{g} \oplus  \Omega^0(\Sigma)  \otimes \mathfrak{g} \oplus  \Omega^2(\Sigma)  \otimes \mathfrak{g})) \cong L^2_k(E_{i}|_{\{0\} \times \Sigma}).$$
The symplectic form $\tilde{\omega}_{\Sigma}$ induces a symplectic form on $L^2(E_{i}|_{\{0\} \times \Sigma})$. By restricting $\Lambda_{\pm 1}(t)$ to the $V_i$--isotypic piece, we obtain Lagrangian subspaces
$$\Lambda^{i}_{\pm 1}(t) \subset L^2(E_{i}|_{\{0\} \times \Sigma}).$$
As a result, the $H$--equivariant operator $D(t)$ induces operators
$$D^{i}(t): L_1^2(E_{i}|_{[-1,1] \times \Sigma}; \Lambda^{i}_{-1}(t), \Lambda^{i}_{1}(t)) \rightarrow L^2(E_{i}|_{[-1,1] \times \Sigma})$$
for all $i = 1, \dots, m$. According to Definition \ref{def_equi_spectral_flow}, we have
\begin{Lemma}
\label{lem_nonequiv_flow}
$Sf_{H}(K_{B_t, f_{\bf q}}) = Sf_{H}(D(t))$ if and only if $Sf(K^i_{B_t, f_{\bf q}}) = Sf(D^{i}(t))$ for any $1 \leq i \leq m$. \qed
\end{Lemma}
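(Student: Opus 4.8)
The plan is to unwind Definition~\ref{def_equi_spectral_flow} on both sides and observe that each equivariant spectral flow is determined, as an element of $\clR(H)$, by its components along the finitely many irreducibles $V_1,\dots,V_m$ occurring in the isotypic decomposition of $\mathfrak{g}$.

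First I would record that the Hilbert space underlying the family $\{K_{B_t,f_{\bf q}}\}$ is a Sobolev completion of $(\Omega^0(Y)\oplus\Omega^1(Y))\otimes\mathfrak{g}$, on which $H$ acts only through the fibrewise adjoint action on the $\mathfrak{g}$--factor, and similarly the space underlying $\{D(t)\}$ is built from $\Omega^*(\Sigma)\otimes\mathfrak{g}$ with the analogous $H$--action (the Lagrangian boundary data $\Lambda_{\pm1}(t)$ being $H$--invariant, as already noted in this subsection). Hence for any irreducible $H$--representation $W$ one has $\Hom_H\!\left(W,(\Omega^0\oplus\Omega^1)\otimes\mathfrak{g}\right)=(\Omega^0\oplus\Omega^1)\otimes\Hom_H(W,\mathfrak{g})$, and by Schur's lemma this vanishes unless $W\cong V_i$ for some $i$; so in the sum defining $Sf_{H}$ only the terms $[V_1],\dots,[V_m]$ can occur, and the sum is finite. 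For $W=V_i$, the family induced on $\Hom_H(V_i,\mathcal{V})$ is, by the very construction of $E_i$ and $K^i_{B_t,f_{\bf q}}$, identified with $\{K^i_{B_t,f_{\bf q}}\}$ via the $H$--equivariant identification $L^2_k(E_i)\cong\Hom_H\!\left(V_i,(\Omega^0(Y)\oplus\Omega^1(Y))\otimes\mathfrak{g}\right)$ already in place; the same argument, now using the $H$--invariant varying domains $L^2_1([-1,1]\times\Sigma;\Lambda_{-1}(t),\Lambda_1(t))$ and their $V_i$--isotypic pieces, identifies the family induced from $\{D(t)\}$ on $\Hom_H(V_i,\mathcal{V})$ with $\{D^i(t)\}$. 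Therefore I would conclude
\[
Sf_{H}(K_{B_t,f_{\bf q}})=\sum_{i=1}^m Sf(K^i_{B_t,f_{\bf q}})\,[V_i],\qquad Sf_{H}(D(t))=\sum_{i=1}^m Sf(D^i(t))\,[V_i].
\]

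Since the classes $[V_1],\dots,[V_m]$ of pairwise non-isomorphic irreducible $H$--representations are linearly independent in $\clR(H)$, these two elements agree if and only if $Sf(K^i_{B_t,f_{\bf q}})=Sf(D^i(t))$ for every $i$, which is the claim. I do not expect a genuine analytic obstacle here; the only points needing care are the vanishing of $\Hom_H(W,\mathfrak{g})$ for $W$ not among the $V_i$ and the compatibility of passing to isotypic pieces with the spectral flow of a family with continuously varying domain — both of which are immediate from the $H$--equivariance of all the operators and boundary subspaces, already established above.
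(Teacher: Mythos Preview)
Your proposal is correct and is precisely the unwinding of Definition~\ref{def_equi_spectral_flow} that the paper has in mind; the paper itself marks the lemma with \qed and gives no argument beyond ``According to Definition~\ref{def_equi_spectral_flow}, we have\ldots''. Your write-up just makes explicit the two points the paper leaves implicit: that only the irreducibles $V_1,\dots,V_m$ occurring in $\mathfrak{g}$ contribute, and that the induced families on those isotypic pieces are by construction $K^i_{B_t,f_{\bf q}}$ and $D^i(t)$.
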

From now on, we will focus on the operators $K^i_{B_t, f_{\bf q}}$ and $D^{i}(t)$ for a fixed $i \in \bZ \cap [1,m]$.

\begin{Definition}
An operator $D: L^2_k(E_i) \rightarrow L^2_{k-1}(E_i)$ is said to satisfy \emph{unique continuation} if for any $v \in \ker(D)$ such that $v|_{U} = 0$ where $U \subset H_2 \subset Y$ is a non-empty open set, we have $v \equiv 0$.
\end{Definition}

\begin{Lemma}
\label{lem_uniq_cont}
For any $t \in [0,1]$, the operator $K^i_{B_t, f_{\bf q}}$ satisfies unique continuation.
\end{Lemma}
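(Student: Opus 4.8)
The plan is to propagate the vanishing of $v$ outward from $U\subset H_2$ across all of $Y$, using the Aronszajn unique continuation theorem on the regions where the perturbation is switched off and an Agmon--Nirenberg type unique continuation argument for an ordinary differential equation in a Hilbert space on the neck $[-2,2]\times\Sigma$ where the holonomy perturbation is active. It suffices to prove the statement for the full operator $K_{B_t,f_{\bf q}}$ rather than its isotypic summand $K^i_{B_t,f_{\bf q}}$, since an element of $\ker K^i_{B_t,f_{\bf q}}$ vanishing on $U$ produces, after evaluation against any nonzero vector of $V_i$, an element of $\ker K_{B_t,f_{\bf q}}$ vanishing on $U$. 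As $K_{B_t,f_{\bf q}}$ is elliptic with smoothing lower-order term, every $v\in\ker K_{B_t,f_{\bf q}}$ is smooth (we take $B_t$ to be smooth, as we may).

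Since $f_{\bf q}$ is supported in $[-1,1]\times\Sigma$ and the functions $h_s$ vanish for $s$ near $\pm1$, there is a $\delta>0$ such that the perturbation term of $K_{B_t,f_{\bf q}}$ vanishes on $Y\setminus([-1+\delta,1-\delta]\times\Sigma)$; on that region $K_{B_t,f_{\bf q}}$ agrees with the twisted odd signature operator \eqref{eqn_twisted_sign}, whose square is a Laplace-type operator with smooth coefficients. Applying Aronszajn's theorem on the connected open set $W_2:=\mathrm{int}(H_2)\cup\big((1-\delta,2]\times\Sigma\big)\subset Y$, on which $v$ lies in the kernel of this second-order operator and vanishes on the nonempty open subset $U$, we conclude $v\equiv0$ on $W_2$, hence $v=0$ on $H_2$ and $v|_{\{s\}\times\Sigma}=0$ for all $s\in[1-\delta,2]$.

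Next, bring $B_t$ to temporal gauge on $[-2,2]\times\Sigma$. By the discussion preceding Theorem~\ref{thm_compare}, under the identification of sections over $[-2,2]\times\Sigma$ with paths into $\Omega^1(\Sigma)\otimes\mathfrak{g}\oplus\Omega^0(\Sigma)\otimes\mathfrak{g}\oplus\Omega^2(\Sigma)\otimes\mathfrak{g}$, the operator $K_{B_t,f_{\bf q}}$ takes the form $\tilde{J}_{\Sigma}\big(\partial_s+S_{A_{s,t}}-dX_s(A_{s,t})\big)$, where $S_{A_{s,t}}$ is the self-adjoint twisted de Rham operator (with the fixed domain $L^2_1$) and $dX_s(A_{s,t})$ is a bounded---indeed smoothing---operator on the relevant $L^2$ space that vanishes for $s$ near $\pm1$; the boundedness of $dX_s(A_{s,t})$ is part of the construction of the perturbation space $\clP$. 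Thus $s\mapsto v(s):=v|_{\{s\}\times\Sigma}$ solves the linear evolution equation $\partial_s v+\big(S_{A_{s,t}}-dX_s(A_{s,t})\big)v=0$ in a fixed Hilbert space, with self-adjoint principal part and bounded zeroth-order term, and $v$ vanishes identically for $s\geq1-\delta$ by the previous step. The Agmon--Nirenberg unique continuation principle for such equations---which rests on the log-convexity of $s\mapsto\|v(s)\|^2$ up to a bounded correction---then forces $v\equiv0$ on all of $[-2,2]\times\Sigma$.

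Finally, apply Aronszajn's theorem once more on the connected open set $W_1:=\mathrm{int}(H_1)\cup\big([-2,-2+\delta)\times\Sigma\big)\subset Y$: there $v$ lies in the kernel of the square of the twisted odd signature operator and vanishes on the nonempty open subset $(-2,-2+\delta)\times\Sigma$ by the previous step, so $v\equiv0$ on $W_1$, hence on $H_1$. Combining the three conclusions gives $v\equiv0$ on $Y$. The main obstacle is the third step: one must know that the linearized holonomy perturbation $dX_s(A_{s,t})$ genuinely defines a bounded (zeroth-order) operator on $L^2(\Sigma;\cdots)$ despite being non-local on $\Sigma$, and one must invoke the Agmon--Nirenberg principle for Hilbert-space evolution equations with self-adjoint generator in place of the purely local Aronszajn theorem.
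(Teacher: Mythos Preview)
Your proof is correct and follows essentially the same three-region strategy as the paper: unique continuation for the (unperturbed) Dirac-type operator on the $H_2$ side, propagation across the neck via the first-order evolution equation $\partial_s v + (S_{A_{s,t}} - dX_s(A_{s,t}))v = 0$, and then unique continuation again on the $H_1$ side. Your invocation of Agmon--Nirenberg on the neck is in fact a more careful justification of what the paper states simply as ``the solution is uniquely determined by the initial value,'' and your reduction from $K^i_{B_t,f_{\bf q}}$ to $K_{B_t,f_{\bf q}}$ is a harmless (and correct) simplification.
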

\begin{proof}
Let $v \in \ker(K^i_{B_t, f_{\bf q}})$ and $v|_{U} = 0$ where $U$ is as above. The operator $K^i_{B_t, f_{\bf q}} |_{[1,2] \times \Sigma \cup H_2}$ comes from the twisted signature operator therefore is a Dirac operator. By the unique continuation property of Dirac operators, we know that $v$ is identically $0$ over $[1,2] \times \Sigma \cup H_2$. Over the region $[-1, 1] \times \Sigma$, the element $v$ defines a curve $v(s)$ in $L^2(E_{i}|_{\{0\} \times \Sigma})$ satisfying the ODE 
$$\tilde{J}_{\Sigma} (\frac{\partial}{\partial s} + S_{A_{s,t}} - dX_s(A_{s,t})) v(s) = 0.$$
Here we use the same notation to represent the induced operators on $L^2(E_{i}|_{[-1,1] \times \Sigma})$.
Since the solution is uniquely determined by the initial value and $v|_{\{1\} \times \Sigma} = 0$, we know that $v|_{[-1,1] \times \Sigma} = 0$. The vanishing of $v$ over $H_1 \cup [-2, -1] \times \Sigma$ comes again from the unique continuation property of the twisted signature operator on $H_1 \cup [-2, -1] \times \Sigma$ and the fact that we have already shown that $v_{\{-1\} \times \Sigma} = 0$. Therefore the lemma is proved.
\end{proof}

\begin{Lemma}
\label{lem_bound}
The operator $K^i_{B_1, f_{\bf q}} - K^i_{B_0, f_{\bf q}}$ is a bounded operator on $L^2_k(E_i)$.
\end{Lemma}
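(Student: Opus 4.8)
The plan is to show that $K^i_{B_1, f_{\bf q}} - K^i_{B_0, f_{\bf q}}$ has order zero up to a smoothing correction, so that it maps $L^2_k$ to $L^2_k$ with no loss of a derivative; the only delicate point is the contribution of the holonomy perturbation. First I would expand the formula \eqref{eqn_def_K}. Put $\beta := B_1 - B_0 \in L^2_k(T^*Y \otimes \frg)$. The principal parts of $K_{B_1, f_{\bf q}}$ and $K_{B_0, f_{\bf q}}$ coincide, and after cancellation the difference is built from the zeroth-order operators $d_{B_1} - d_{B_0}$, $d^*_{B_1} - d^*_{B_0}$ and $*d_{B_1} - *d_{B_0}$, each of which is pointwise Lie-bracket against $\beta$ composed with (bounded, algebraic) Hodge-star operations, together with the term $DV_{f_{\bf q}}(B_1) - DV_{f_{\bf q}}(B_0)$. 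For the first group, since $\dim Y = 3$ and $k \ge 2 > \tfrac{3}{2}$, the Sobolev multiplication theorem says $L^2_k(Y)$ is a Banach algebra acting boundedly by multiplication on $L^2_k$; hence these operators are bounded on $L^2_k\big(\Omega^0(Y) \oplus \Omega^1(Y) \otimes \frg\big)$.

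For the perturbation term I would invoke the structural properties of holonomy perturbations recalled from \cite[Section 3]{kronheimer2011knot}: the perturbing vector field $V_{f_{\bf q}} \colon \clC(Y) \to L^2_k(T^*Y \otimes \frg)$ is smooth (indeed smooth as a map into smooth sections, so each $DV_{f_{\bf q}}(B)$ is smoothing), whence $B \mapsto DV_{f_{\bf q}}(B)$ is a locally Lipschitz map into the Banach space of bounded operators on $L^2_k(T^*Y \otimes \frg)$; in particular $DV_{f_{\bf q}}(B_1) - DV_{f_{\bf q}}(B_0)$ is bounded on $L^2_k$. Combining the two groups shows $K_{B_1, f_{\bf q}} - K_{B_0, f_{\bf q}}$ is a bounded operator on $L^2_k(\frg \oplus T^*Y \otimes \frg)$. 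Since this difference is $H$-equivariant, restricting it to the $V_i$-isotypic summand (equivalently, applying $\Hom_H(V_i, -)$) preserves boundedness, which yields the claim for $K^i_{B_1, f_{\bf q}} - K^i_{B_0, f_{\bf q}}$ on $L^2_k(E_i)$.

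The one place requiring care is the holonomy term: one must know that the difference of the linearizations $DV_{f_{\bf q}}$ gains the derivative that $K_{B_t, f_{\bf q}}$ loses, i.e.\ is bounded $L^2_k \to L^2_k$ rather than merely $L^2_k \to L^2_{k-1}$. This is precisely the tameness and smoothing property of holonomy perturbations established by Kronheimer--Mrowka, so it is a matter of quoting \emph{loc.\ cit.} rather than proving a new estimate; the genuinely local input is just the dimension-three Sobolev algebra property $L^2_k \cdot L^2_k \subset L^2_k$.
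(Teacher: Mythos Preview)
Your proposal is correct and follows essentially the same approach as the paper: decompose the difference into a zeroth-order piece (Lie bracket against $\beta = B_1 - B_0$) and the perturbation piece $DV_{f_{\bf q}}(B_1) - DV_{f_{\bf q}}(B_0)$, handle the former by Sobolev multiplication and the latter by the tameness/smoothing properties of holonomy perturbations from \cite{kronheimer2011knot}, then restrict to the isotypic summand. The paper's own argument is more terse---it just names the two pieces and cites \cite[Proposition~3.5]{kronheimer2011knot} for the bound $C(\|B_0\|_{L^2_k} + \|B_1\|_{L^2_k})$---but the substance is identical.
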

\begin{proof}
By definition, $K^i_{B_1, f_{\bf q}} - K^i_{B_0, f_{\bf q}}$ is the sum of a $0$--th order differential operator depending on $B_0, B_1$ and the induced operator on $L^2_k(E_i)$ given by the difference
$$DV_{f_{\bf q}}(B_1) - DV_{f_{\bf q}}(B_0).$$
Use the definition and \cite[Proposition 3.5]{kronheimer2011knot}, these $2$ terms are both bounded by $C (\|B_0\|_{L^2_k} + \|B_1\|_{L^2_k})$ where $C >0$ is some constant depending on $Y$, so the lemma is proved.
\end{proof}

Recall that for a family of self-adjoint operators $\{ D_t \}_{0 \leq t \leq 1}$, its \emph{resonance set} $Z(D_t)$ consists of $t \in [0,1]$ such that $\ker(D_t) \neq \{ 0 \}$. Suppose $t_0 \in Z(D_t)$ and let $P_{t_0}$ be the orthogonal projection onto $\ker(D_{t_0})$ . Then the \emph{resonance matrix} $R(D_{t_0})$ at $t_0$ is defined to be the map $P_{t_0} \dot{D}_{t_0}: \ker(D_{t_0}) \rightarrow \ker(D_{t_0})$. 

\begin{Definition}
A family of self-adjoint operators $\{ D_t \}_{0 \leq t \leq 1}$ is said to be \emph{positive} if the resonance matrices are all positive definite. It is said to be \emph{negative} if the family $\{ - D_t \}_{0 \leq t \leq 1}$ is positive.
\end{Definition}

Using Kato's selection theorem, if the family $\{ D_t \}_{0 \leq t \leq 1}$ is positive or negative, the set $Z(D_t)$ is discrete. 

\begin{Lemma}
\label{lem_definite}
The $1$--parameter family $\{ K^i_{B_t, f_{\bf q}} \}_{0 \leq t \leq 1}$ defined over $L^2_1(E_i) \subset L^2(E_i)$ is homotopic relative end points to the concatenation of a positive $1$--parameter family and a negative $1$--parameter family such that each operator in these $2$--families satisfies unique continuation.
\end{Lemma}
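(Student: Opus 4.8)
\emph{Proof proposal.} The plan is to carry out the standard ``push the path up by a large constant and then bring it back down'' normalization, keeping track of the unique continuation property throughout. Write $D_t:=K^i_{B_t,f_{\bf q}}$ and $B_t=\theta+b_t$, where $\{b_t\}$ is a smooth family in $L^2_k(T^*Y\otimes\mathfrak{g})$ with $k\ge 2$. The first step is to record that the velocity of the path is a uniformly bounded self-adjoint operator on $L^2(E_i)$: differentiating the formula \eqref{eqn_def_K} in $t$, the first-order part $(*d_{B_t},d_{B_t},d_{B_t}^{*})$ contributes only zeroth-order multiplication operators by the $C^0$--section $\dot b_t$, which are bounded on $L^2(E_i)$ since $L^2_2\hookrightarrow C^0$ in dimension three, while the perturbation part contributes $\tfrac{d}{dt}DV_{f_{\bf q}}(B_t)$, which is bounded by the estimates of \cite[Proposition 3.5]{kronheimer2011knot} exactly as in the proof of Lemma \ref{lem_bound}. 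Since $[0,1]$ is compact this gives $M:=\sup_{t\in[0,1]}\|\dot D_t\|_{L^2(E_i)\to L^2(E_i)}<\infty$, and $\dot D_t$ is self-adjoint, being the derivative of a family of self-adjoint operators.

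Next I would fix a constant $R>M$ and define the two families
$$P(s):=K^i_{B_s,f_{\bf q}}+sR\cdot\id,\qquad N(s):=K^i_{B_1,f_{\bf q}}+(1-s)R\cdot\id,\qquad s\in[0,1],$$
so that $P(0)=D_0$, $P(1)=N(0)=D_1+R\cdot\id$ and $N(1)=D_1$; thus $P*N$ is a path from $D_0$ to $D_1$ through self-adjoint Fredholm operators all having domain $L^2_1(E_i)$. The family $P$ is positive: at any $s_0$ with $\ker P(s_0)\ne\{0\}$ the resonance matrix is the compression of $\dot D_{s_0}+R\cdot\id$ to $\ker P(s_0)$, which is bounded below by $R-M>0$. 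The family $N$ is negative, since $\dot N(s)\equiv -R\cdot\id$, so the resonance matrices of $\{-N(s)\}$ are $R\cdot\id|_{\ker}>0$. Every operator occurring in $P$ or $N$ has the form $K^i_{B,f_{\bf q}}+c\cdot\id$ with $c\ge 0$ and $B\in\{B_t\}$, and the proof of Lemma \ref{lem_uniq_cont} applies verbatim to such operators: adding the bounded zeroth-order term $c\cdot\id$ does not affect strong unique continuation for the Dirac-type operator on $[1,2]\times\Sigma\cup H_2$ and on $H_1\cup[-2,-1]\times\Sigma$, nor uniqueness of solutions of the linear first-order ODE in the $s$--direction on $[-1,1]\times\Sigma$.

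Finally I would produce the homotopy rel endpoints using the two-parameter family $G(t',\tau):=K^i_{B_{t'},f_{\bf q}}+\tau R\cdot\id$ on $L^2_1(E_i)$, $(t',\tau)\in[0,1]^2$, every member of which is self-adjoint Fredholm with domain $L^2_1(E_i)$. Inside the image of $G$, the path $P*N$ is the image of the diagonal from $(0,0)$ to $(1,1)$ followed by the right edge from $(1,1)$ to $(1,0)$, while $\{K^i_{B_t,f_{\bf q}}\}_{t\in[0,1]}$ is the image of the bottom edge from $(0,0)$ to $(1,0)$. Since the square $[0,1]^2$ is contractible, these two paths with common endpoints $(0,0)$ and $(1,0)$ are homotopic relative to their endpoints, and composing the homotopy with $G$ keeps us within operators of the form $G(t',\tau)$; concretely one slides the lift height from $R$ down to $0$. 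This exhibits $\{K^i_{B_t,f_{\bf q}}\}$ as homotopic rel endpoints to the concatenation $P*N$ of a positive and a negative family, each consisting of operators satisfying unique continuation, as claimed. The only point requiring genuine care is the uniform velocity bound $M<\infty$ in the first step, which is the infinitesimal counterpart of Lemma \ref{lem_bound}; the positivity and negativity computations and the homotopy are then formal.
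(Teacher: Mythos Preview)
Your proof is correct and follows the same ``push up by a large constant, then come back down'' strategy as the paper, but the specific decomposition differs in a way worth noting. The paper keeps the connection fixed at $B_0$ during the positive leg, setting $K^i_+(t)=K^i_{B_0,f_{\bf q}}+tC\cdot\id$, and then uses the \emph{linear interpolation} $K^i_-(t)=(1-t)\bigl(K^i_{B_0,f_{\bf q}}+C\cdot\id\bigr)+t\,K^i_{B_1,f_{\bf q}}$ for the negative leg; this requires only the displacement bound $\|K^i_{B_1,f_{\bf q}}-K^i_{B_0,f_{\bf q}}\|<C$ supplied by Lemma \ref{lem_bound}. You instead run along the original path $B_t$ while shifting upward, $P(s)=K^i_{B_s,f_{\bf q}}+sR\cdot\id$, and then drop straight down at $B_1$; this requires the stronger (but still easy) infinitesimal bound $\sup_t\|\dot D_t\|<R$. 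Both choices work; the paper's version avoids differentiating the path and hence literally invokes Lemma \ref{lem_bound}, while yours makes the homotopy to the original path more transparent via the two-parameter square $G(t',\tau)$. For unique continuation, both arguments appeal to the same observation: every operator appearing is of Dirac type on $H_1\cup[-2,-1]\times\Sigma$ and on $[1,2]\times\Sigma\cup H_2$ (the principal symbol being independent of the connection and of the added zeroth-order term), and restricts to a first-order ODE in $s$ on $[-1,1]\times\Sigma$.
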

\begin{proof}
Using Lemma \ref{lem_bound}, we can choose some $C > 0$ such that the norm of $K^i_{B_1, f_{\bf q}} - K^i_{B_0, f_{\bf q}}$ over $L^2(E_i)$ is bounded above by $C -1$. Consider the families
$$K^{i}_{+}(t) = K^i_{B_0, f_{\bf q}} + t C \cdot \id$$
$$K^{i}_{-}(t) = K^i_{B_0, f_{\bf q}} + C \cdot \id + t (K^i_{B_1, f_{\bf q}} - K^i_{B_0, f_{\bf q}} - C \cdot \id)$$
for $0 \leq t \leq 1$. It is easy to see that $K^{i}_{+}(t)$ is positive and $K^{i}_{-}(t)$ is negative. The affine homotopy between $\{ K^i_{B_t, f_{\bf q}} \}_{0 \leq t \leq 1}$ and the concatenation of these $2$ families gives the desired homotopy. For the unique continuation property, one proceeds exactly the same as in the proof of Lemma \ref{lem_uniq_cont} by noticing that $K^{i}_{\pm}(t)$ could be described by Dirac operators on $H_1 \cup [-2,-1] \times \Sigma \cup [1,2] \times \Sigma \cup H_2$ and the kernel of $K^{i}_{\pm}(t)$ consists of solutions to an ODE on $[-1,1] \times \Sigma$.
\end{proof}

Given a positive or negative family $\{ D_t \}_{0 \leq t \leq 1}$, we wish to perturb it further so that for any $t_0 \in Z(D_t)$, the kernel of $D_{t_0}$ becomes $1$--dimensional. This is the transversality result established in \cite[Proposition 3.6]{nicolaescu1995maslov}. The unique continuation condition stated above guarantees that the proof of \cite[Proposition 3.6]{nicolaescu1995maslov} works without change to give the following statement.

\begin{Lemma}
\label{lem_ope_perturb}
For the two families $\{ K^{i}_{\pm}(t) \}_{0 \leq t \leq 1}$ constructed in Lemma \ref{lem_definite}, one can find 
$$\alpha_{\pm}: [0,1] \rightarrow C^{\infty}(\Hom(E_i, E_i))$$ which are arbitrarily small such that the following holds:
\begin{enumerate}
\item $\alpha_{\pm}(t)$ is supported away from $[-2, 2] \times \Sigma$;
\item The family $K^{i}_{+}(t) + \alpha_{+}(t)$ (resp. $K^{i}_{-}(t) + \alpha_{-}(t)$) remains to be positive (resp. negative) and for any $t_0 \in Z(K^{i}_{+}(t) + \alpha_{+}(t))$ (resp. $t_0 \in Z(K^{i}_{-}(t) + \alpha_{-}(t))$ ), the kernel of $K^{i}_{+}(t_0) + \alpha_{+}(t_0)$ (resp. $K^{i}_{-}(t_0) + \alpha_{-}(t_0)$) is actually $1$--dimensional. \qed
\end{enumerate} 
\end{Lemma}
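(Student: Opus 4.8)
The plan is to follow the transversality scheme of Nicolaescu \cite[Proposition 3.6]{nicolaescu1995maslov}, with the unique continuation property of Lemma \ref{lem_uniq_cont} (which holds for $K^i_\pm(t)$ as well, cf.\ the end of the proof of Lemma \ref{lem_definite}) playing exactly the role of the corresponding property there. I will argue for the two families at once, writing $\{D_t\}_{0\le t\le 1}$ for either $\{K^i_+(t)\}$ or $\{K^i_-(t)\}$; after replacing $D_t$ by $-D_t$ if necessary we may assume $\{D_t\}$ is positive. Fix once and for all a non-empty open set $U$ with $\overline U\subset H_2$ (in particular disjoint from $[-2,2]\times\Sigma$), and let $\mathcal A$ be a separable Banach space of perturbations: paths $\alpha\colon[0,1]\to C^\infty(\Hom(E_i,E_i))$ valued in fiberwise self-adjoint endomorphisms supported inside $U$, completed in a Floer-type norm as in the construction of $\clP$ and $\clH$. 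For $\alpha\in\mathcal A$ the family $\{D_t+\alpha(t)\}$ is again self-adjoint, and since $\alpha(t)$ vanishes on the neck, $D_t+\alpha(t)$ is still of Dirac type on $H_1\cup[-2,-1]\times\Sigma\cup[1,2]\times\Sigma\cup H_2$ and governed by an ODE in the $s$-variable on $[-1,1]\times\Sigma$, so it still satisfies unique continuation.

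For each integer $k\ge 2$ let $\mathcal Z_k\subset[0,1]\times\mathcal A$ be the set of pairs $(t,\alpha)$ with $\dim\ker\big(D_t+\alpha(t)\big)\ge k$. Near $(t_0,\alpha_0)\in\mathcal Z_k$, set $N:=\ker\big(D_{t_0}+\alpha_0(t_0)\big)$; since $(t,\alpha)\mapsto D_t+\alpha(t)$ is affine in $t$ and linear in $\alpha$, a Lyapunov--Schmidt reduction using the Riesz projection onto the eigenvalues of $D_t+\alpha(t)$ near $0$ yields a smooth map $\mathbf F$ from a neighbourhood of $(t_0,\alpha_0)$ to the space $\sym(N)$ of self-adjoint endomorphisms of $N$, with $\mathbf F(t_0,\alpha_0)=0$ and $\dim\ker\big(D_t+\alpha(t)\big)=\dim\ker\mathbf F(t,\alpha)$ nearby. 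Up to harmless identifications the $\alpha$-differential of $\mathbf F$ at $(t_0,\alpha_0)$ sends $\gamma\in C^\infty(\Hom(E_i,E_i))$, self-adjoint and supported in $U$, to the matrix $\big(\langle\gamma v_a,v_b\rangle_{L^2}\big)_{a,b}\in\sym(N)$, where $\{v_a\}$ is an orthonormal basis of $N$.

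The heart of the matter is that this map is surjective, and this is where unique continuation enters. Dualizing: if $(S_{ab})\in\sym(N)$ pairs to zero with every such matrix, then rewriting the pairing as $\int_U\big\langle\gamma(x),\sum_{ab}S_{ab}\,v_b(x)\otimes v_a(x)^*\big\rangle\,dx$ and letting $\gamma$ run over all self-adjoint bumps supported in $U$, one gets $V(x)\,S\,V(x)^*=0$ for every $x\in U$, where $V(x)\colon\bR^{\dim N}\to(E_i)_x$ sends the $a$-th basis vector to $v_a(x)$. Hence $S$ vanishes on $\big(\bigcap_{x\in U}\ker V(x)\big)^{\perp}$, and $\bigcap_{x\in U}\ker V(x)$ is the space of $c$ with $\sum_a c_a v_a\equiv 0$ on $U$; this is $\{0\}$ by unique continuation, since the $v_a$ are linearly independent and $U\subset H_2$ is open. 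Thus $S=0$, so $\mathbf F$ is a submersion and each $\mathcal Z_k$ is a Banach submanifold. Moreover the set of matrices in $\sym(N)$ of nullity $\ge k$ is a finite union of submanifolds, each of codimension $\ge k(k+1)/2\ge 3$, so over each of the corresponding pieces the projection $\mathcal Z_k\to\mathcal A$ is Fredholm of index $\le 1-3<0$; by Sard--Smale the union over $k\ge 2$ of the images of these pieces is meager in $\mathcal A$. Intersecting its complement with an arbitrarily small ball about $0$ produces $\alpha_\pm$ of arbitrarily small norm, supported away from $[-2,2]\times\Sigma$, with $\dim\ker\big(K^i_\pm(t)+\alpha_\pm(t)\big)\le 1$ for every $t$.

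What remains is to keep the sign of the family, and I expect the interplay of this with the genericity requirement to be the fiddliest (though standard) point. Positivity is open under perturbations small in $C^1$ in the $t$-variable: by compactness of $[0,1]$ the resonance matrices of $\{D_t\}$ are uniformly positive definite, so every resonance point of a $C^1$-small perturbation lies near an old one and carries a resonance matrix close to an old one, hence still positive definite. Since the residual set of admissible $\alpha$ produced above is dense, we may choose $\alpha_\pm$ inside it and also $C^1$-small in $t$; then $\{K^i_\pm(t)+\alpha_\pm(t)\}$ is still positive (resp.\ negative), its resonance set is discrete by Kato's selection theorem, and at each resonance point the kernel is $1$-dimensional, as the lemma asserts. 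The only genuinely new input over \cite{nicolaescu1995maslov} is that perturbations confined to a small ball inside $H_2$ already suffice, which, as shown above, reduces cleanly to unique continuation.
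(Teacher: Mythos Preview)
Your proposal is correct and follows exactly the route the paper takes: the paper's entire proof is the remark preceding the lemma that ``the unique continuation condition stated above guarantees that the proof of \cite[Proposition 3.6]{nicolaescu1995maslov} works without change,'' together with the \qed\ in the statement. You have simply unpacked that citation, and your surjectivity argument via unique continuation on $U\subset H_2$ is precisely the mechanism Nicolaescu uses.

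One small comment on the final paragraph: your positivity-preservation argument is correct but slightly indirect. For the specific families at hand you can avoid tracking where the new resonance points land. Since $\dot K^i_+(t)=C\cdot\id$ identically, the resonance matrix of the perturbed family at \emph{any} resonance point is the compression of $C\cdot\id+\dot\alpha_+(t)$ to the kernel; as soon as $\sup_t\|\dot\alpha_+(t)\|<C$ this operator is globally positive, hence every compression is positive definite. Likewise $\dot K^i_-(t)=K^i_{B_1,f_{\bf q}}-K^i_{B_0,f_{\bf q}}-C\cdot\id$ has all eigenvalues $\le -1$ by the choice of $C$ in Lemma~\ref{lem_definite}, so $\sup_t\|\dot\alpha_-(t)\|<1$ suffices. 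This sidesteps the issue that the new $1$-dimensional kernels need not sit inside the old (possibly higher-dimensional) ones, which makes the phrase ``carries a resonance matrix close to an old one'' a little awkward dimensionally.
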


\begin{proof}[Proof of Theorem \ref{thm_compare}]
Let us fix $\alpha_{\pm}(t)$ as in Lemma \ref{lem_ope_perturb} and denote $\tilde{K}^{i}_{\pm}(t) = K^{i}_{\pm}(t) + \alpha_{\pm}(t)$. By construction, $\{ \tilde{K}^{i}_{\pm}(t) \}_{0 \leq t \leq 1}$ defines operators
$$\tilde{K}^{i}_{\pm}(t) : L^2_1(E_i) \rightarrow L^2(E_i)$$
for all $0 \leq t \leq 1$ which are Dirac operators when restricted to $H_1 \cup [-2,-1] \times \Sigma \cup [1,2] \times \Sigma \cup H_2$ and they are cylindrical over $[-2,-1] \times \Sigma \cup [1,2] \times \Sigma$. Therefore, they define Lagrangian subspaces 
$$\tilde{\Lambda}^{i}_{-1}(t)_{\pm} \subset L^2(E_{i}|_{\{0\} \times \Sigma})$$
consisting of restriction of elements in $L^2_{1/2}(E_{i}|_{H_1 \cup [-2,-1] \times \Sigma})$ lying in 
$$\ker(\tilde{K}^{i}_{\pm}(t)|_{H_1 \cup [-2,-1] \times \Sigma}).$$
Similarly one can construct 
$$\tilde{\Lambda}^{i}_{1}(t)_{\pm} \subset L^2(E_{i}|_{\{0\} \times \Sigma})$$
by considering the restriction to $[1,2] \times \Sigma \cup H_2$.  We can then construct the operators
$$\tilde{D}^{i}_{\pm}(t): L_1^2(E_{i}|_{[-1,1] \times \Sigma}; \tilde{\Lambda}^{i}_{-1}(t)_{\pm}, \tilde{\Lambda}^{i}_{1}(t)_{\pm}) \rightarrow L^2(E_{i}|_{[-1,1] \times \Sigma})$$
similar to the construction of $D(t)$, which define self-adjoint operators using the Lagrangian boundary conditions.

Making $\alpha_{\pm}(t)$ smaller if necessary, the spectral flow of the family $\{ K_{B_t, f_{\bf q}} \}_{0 \leq t \leq 1}$ is the same as the sum of the spectral flows of $\{ \tilde{K}^{i}_{+}(t) \}_{0 \leq t \leq 1}$ and $\{ \tilde{K}^{i}_{-}(t) \}_{0 \leq t \leq 1}$. Similarly, by concatenating the Lagrangian boundary conditions, it is easy to see that the spectral flow of $\{ D^{i}(t) \}_{0 \leq t \leq 1}$ is equal to the sum of the spectral flows of $\{ \tilde{D}^{i}_{+}(t) \}_{0 \leq t \leq 1}$ and $\{ \tilde{D}^{i}_{-}(t) \}_{0 \leq t \leq 1}$. Using Lemma \ref{lem_nonequiv_flow}, it suffices to show that
$$Sf(\tilde{K}^{i}_{+}(t)) = Sf(\tilde{D}^{i}_{+}(t)) \mbox{ and } Sf(\tilde{K}^{i}_{-}(t)) = Sf(\tilde{D}^{i}_{-}(t)). $$

We prove the plus version of the above statement and the negative version holds using the same argument. From the definition, we see that
$$Z(\tilde{K}^{i}_{+}(t)) = Z(\tilde{D}^{i}_{+}(t))$$
therefore it suffices to show that the resonance matrices of $\{ \tilde{K}^{i}_{+}(t) \}_{0 \leq t \leq 1}$ and $\{ \tilde{D}^{i}_{+}(t) \}_{0 \leq t \leq 1}$ have the same sign at any $t_0 \in Z(\tilde{K}^{i}_{+}(t)) = Z(\tilde{D}^{i}_{+}(t))$. In other words, we just need to show that the resonance matrix of $\tilde{D}^{i}_{+}(t)$ at $t = t_0$ is positive definite. Let $v \in \ker(\tilde{D}^{i}_{+}(t_0)) - \{0\}$. Note that $v$ could also be viewed as an element of $\ker(\tilde{K}^{i}_{+}(t_0)).$ Let $\chi: [0,1] \rightarrow \bR$ be a non-negative smooth function supported in $(0,1)$ which is strictly positive over $[\frac14, \frac34]$. Then $\chi$ defines a function on $[0,1] \times \Sigma \subset Y$ by composing it with the projection to the $s$--coordinate. This further extends to a smooth function over $Y$ in the obvious way. Then for $\epsilon > 0$ sufficiently small, the family
\begin{equation}
\label{eqn_def_three}
\{ \tilde{K}^{i}_{+}(t_0) + (t - t_0) \chi \cdot \id \}_{t_0 - \epsilon \leq t \leq t_0 + \epsilon}
\end{equation}
is $C^0$--close to $\{ \tilde{K}^{i}_{+}(t) \}_{t_0 - \epsilon \leq t \leq t_0 + \epsilon}$ and the affine homotopy between them preserves the spectral flow. This is because $\langle v, \chi v \rangle_{L^2} > 0$ and the kernel is $1$--dimensional: if $\langle v, \chi v \rangle_{L^2} = 0$, the restriction of $v$ to the time slice $\{ \frac12 \} \times \Sigma$ is $0$ thus it vanishes on $[-1, 1] \times \Sigma$ by uniqueness of solutions to ODE therefore vanishes on whole $Y$ by unique continuation of Dirac operators. Accordingly, we can construct the induced affine homotopy from $\{ \tilde{D}^{i}_{+}(t) \}_{t_0 - \epsilon \leq t \leq t_0 + \epsilon}$ to 
\begin{equation}
\label{eqn_def_two}
\{ \tilde{D}^{i}_{+}(t_0) + (t - t_0) \chi \cdot \id \}_{t_0 - \epsilon \leq t \leq t_0 + \epsilon}
\end{equation}
preserving the spectral flow. Now notice that because $\chi$ is supported in the region $[0,1] \times \Sigma$ and the kernels of the new families at time $t_0$ are all spanned by $v$, we see that the resonance matrices of these $2$ families \eqref{eqn_def_three} and \eqref{eqn_def_two} have the same value at time $t_0$. This finishes the proof.
\end{proof}

\subsection{An adiabatic limit result}

Let $\{ A_{s,t} \}_{-1 \leq s \leq 1, 0 \leq t \leq 1} \subset A_{F}(\Sigma)$ be a smooth family of flat connections on $\Sigma$. Viewing $\Sigma$ as the boundary of $H_1 \cup [-2,-1] \times \Sigma$ and $[1,2] \times \Sigma \cup H_2$, we further suppose that $A_{-1, t}$ extends to a flat connection $B_{-1,t}$ on $H_1 \cup [-2,-1] \times \Sigma$ and $A_{1, t}$ extends to a flat connection $B_{1,t}$ on $[1,2] \times \Sigma \cup H_2$ for all $t \in [0,1]$. Then we can define $\{ \Lambda_{\pm 1}(t) \}_{0 \leq t \leq 1}$ be the same as in the previous subsection. Note that we do not require that $A_{s, 0}$ or $A_{s,1}$ comes from the restriction of a $f_{\bf q}$--perturbed flat connection on $Y$ as in the previous subsection. This setup gives us more flexibility for later calculations. Furthermore, we assume that $\Stab(A_{s,t}) \cong H \subset G$ remains to be the same for all $(s,t) \in [-1,1] \times [0,1]$.

Given $A_{s,t} \in \clA_{F}(\Sigma)$, let $\clH_{A_{s,t}}$ be the kernel of the twisted de Rham operator
$$S_{A_{s,t}}: L^2(\Omega^1(\Sigma) \otimes \mathfrak{g} \oplus  \Omega^0(\Sigma)  \otimes \mathfrak{g} \oplus  \Omega^2(\Sigma)  \otimes \mathfrak{g}) \rightarrow L^2(\Omega^1(\Sigma) \otimes \mathfrak{g} \oplus  \Omega^0(\Sigma)  \otimes \mathfrak{g} \oplus  \Omega^2(\Sigma)  \otimes \mathfrak{g}).$$
In other words, $\clH_{A_{s,t}}$ is given by the total space of twisted harmonic forms on $\Sigma$ with respect to the flat connection $A_{s,t}$. This is a symplectic vector space using the restriction of the symplectic form $\tilde{\omega}_{\Sigma}$. Let 
$$\pi_{A_{s,t}}: L^2(\Omega^1(\Sigma) \otimes \mathfrak{g} \oplus  \Omega^0(\Sigma)  \otimes \mathfrak{g} \oplus  \Omega^2(\Sigma)  \otimes \mathfrak{g}) \rightarrow \clH_{A_{s,t}}$$
be the $L^2$--orthogonal projection.
Define
$$\clL_{\pm 1}(t) := \clH_{A_{\pm 1,t}} \cap \Lambda_{\pm 1}(t), \mbox{ for } 0 \leq t \leq 1.$$

\begin{Lemma}
$\clL_{i}(t)$ is a Lagrangian subspace of $\clH_{A_{i,t}}$ for $i=\pm1$.
\end{Lemma}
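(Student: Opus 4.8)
The plan is to reduce the claim to a short piece of linear algebra, using only that $\Lambda_{i}(t)$ is Lagrangian in the ambient $L^{2}$ symplectic space (already recorded) together with two structural facts. Write $\mathcal{V}=L^{2}(\Omega^{1}(\Sigma)\otimes\frg\oplus\Omega^{0}(\Sigma)\otimes\frg\oplus\Omega^{2}(\Sigma)\otimes\frg)$ with its form $\tilde{\omega}_{\Sigma}$ and compatible complex structure $\tilde{J}_{\Sigma}$, and abbreviate $\clH:=\clH_{A_{i,t}}=\ker S_{A_{i,t}}$ and $\Lambda:=\Lambda_{i}(t)$. The two facts I would establish are: (A) the $L^{2}$-orthogonal splitting $\mathcal{V}=\clH\oplus\clH^{\perp}$ is simultaneously a $\tilde{\omega}_{\Sigma}$-orthogonal splitting into symplectic subspaces; and (B) the Cauchy data subspace respects this splitting, i.e. $\Lambda=(\Lambda\cap\clH)\oplus(\Lambda\cap\clH^{\perp})$.

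For (A) I would argue as follows. From the explicit formulas $\tilde{J}_{\Sigma}(a,\phi,\psi)=(*a,-*\psi,*\phi)$ and $S_{A}(a,\phi,\psi)=(-*d_{A}\phi-d_{A}*\psi,\ *d_{A}a,\ d_{A}*a)$ a direct computation gives $\tilde{J}_{\Sigma}S_{A_{i,t}}=-S_{A_{i,t}}\tilde{J}_{\Sigma}$, which just expresses that $\tilde{J}_{\Sigma}$ is the Clifford action of the normal direction and $S_{A_{i,t}}$ the tangential operator of the twisted odd signature operator, so they anticommute. Hence $\tilde{J}_{\Sigma}$ preserves $\clH=\ker S_{A_{i,t}}$, and being $L^{2}$-orthogonal (equivalently, skew-adjoint) it also preserves $\clH^{\perp}$; since $\tilde{\omega}_{\Sigma}(x,y)$ agrees up to sign with $\langle\tilde{J}_{\Sigma}x,y\rangle_{L^{2}}$, a vector in $\clH$ is $\tilde{\omega}_{\Sigma}$-orthogonal to every vector in $\clH^{\perp}$. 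Thus $\clH$ and $\clH^{\perp}$ are mutually $\tilde{\omega}_{\Sigma}$-orthogonal symplectic subspaces, and $\tilde{\omega}_{\Sigma}|_{\clH}$ is exactly the form making $\clH_{A_{i,t}}$ symplectic.

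Granting (A) and (B), the linear algebra is short. Isotropy of $\clL_{i}(t)=\Lambda\cap\clH$ in $\clH$ is immediate, since it lies inside the isotropic subspace $\Lambda$ and carries the restricted form. For coisotropy, let $x\in\clH$ with $\tilde{\omega}_{\Sigma}(x,y)=0$ for all $y\in\clL_{i}(t)$; for an arbitrary $\lambda\in\Lambda$ decompose $\lambda=\lambda_{1}+\lambda_{2}$ with $\lambda_{1}\in\Lambda\cap\clH=\clL_{i}(t)$ and $\lambda_{2}\in\Lambda\cap\clH^{\perp}$ by (B), so $\tilde{\omega}_{\Sigma}(x,\lambda)=\tilde{\omega}_{\Sigma}(x,\lambda_{1})+\tilde{\omega}_{\Sigma}(x,\lambda_{2})=0$, the first term by hypothesis and the second because $x\in\clH$ is $\tilde{\omega}_{\Sigma}$-orthogonal to $\clH^{\perp}$ by (A). Then $x$ annihilates all of $\Lambda$, so $x\in\Lambda^{\tilde{\omega}_{\Sigma}}=\Lambda$ (as $\Lambda$ is Lagrangian), and therefore $x\in\Lambda\cap\clH=\clL_{i}(t)$. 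Hence $\clL_{i}(t)$ is coisotropic, and together with isotropy it is Lagrangian in $\clH_{A_{i,t}}$.

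The hard part, and the only place the hypotheses are genuinely used, is (B). Near the boundary component $\Sigma$ of the piece $W_{i}$ over which $\Lambda_{i}(t)$ is defined (with $W_{-1}=H_{1}\cup[-2,-1]\times\Sigma$ and $W_{1}=[1,2]\times\Sigma\cup H_{2}$) the metric is cylindrical and $B_{i,t}$ is pulled back from $A_{i,t}$, so $K_{B_{i,t}}$ is in product form $\tilde{J}_{\Sigma}(\partial_{s}+S_{A_{i,t}})$ there. I would then invoke the structure theory of Cauchy data spaces of Dirac-type operators with product boundary structure --- resting on the exponential dichotomy of the collar solution operator $e^{-sS_{A_{i,t}}}$, which acts as the identity on $\ker S_{A_{i,t}}$ and hyperbolically on its complement --- to conclude that $\Lambda_{i}(t)$ splits $\tilde{\omega}_{\Sigma}$-orthogonally into a graph over the positive eigenspace of $S_{A_{i,t}}$ inside $\clH^{\perp}$ plus a subspace of $\clH$, so that the $\clH$-summand is exactly $\Lambda_{i}(t)\cap\clH$; see \cite[Section 2]{nicolaescu1995maslov}. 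I expect this to be the delicate point, because mere commensurability of $\Lambda_{i}(t)$ with the polarization $\ker S_{A_{i,t}}\oplus(\text{positive eigenspace})$ does \emph{not} imply (B): a Lagrangian commensurable with that polarization can meet $\ker S_{A_{i,t}}$ in a non-Lagrangian, even trivial, subspace, so one must exploit the genuine product collar. As an alternative that sidesteps (B), I would identify $\clL_{i}(t)$ with the image of the restriction map $H^{*}(W_{i};\frg_{B_{i,t}})\to H^{*}(\Sigma;\frg_{A_{i,t}})$ realized by twisted harmonic representatives, and apply Poincar\'e--Lefschetz duality (``half lives, half dies'') to see that this image is Lagrangian for the cup-product pairing, which coincides with $\tilde{\omega}_{\Sigma}|_{\clH_{A_{i,t}}}$; this route needs only the dimension count $\dim\clL_{i}(t)=\tfrac{1}{2}\dim\clH_{A_{i,t}}$ in addition to the automatic isotropy.
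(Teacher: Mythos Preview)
Your outline is sound once you take the Poincar\'e--Lefschetz alternative, but your primary route through (B) has exactly the gap you flag: commensurability of $\Lambda_{i}(t)$ with the APS-type polarization does \emph{not} give the exact splitting $\Lambda=(\Lambda\cap\clH)\oplus(\Lambda\cap\clH^{\perp})$, and there is no general structure result in \cite{nicolaescu1995maslov} that upgrades commensurability to this. So the clean linear-algebra reduction via (A)+(B) does not go through as written; the alternative you sketch (identify $\clL_{i}(t)$ with the image of $H^{*}(W_{i};\frg_{B_{i,t}})\to H^{*}(\Sigma;\frg_{A_{i,t}})$ and invoke ``half lives, half dies'') is the one that actually closes the argument.

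The paper takes a different, more hands-on route. It also notes isotropy is immediate and then argues the half-dimension bound directly, without (B) and without invoking duality in the abstract. Concretely, it exhibits enough elements of $\clL_{-1}(t)$ by hand: first $(0,\ker d_{A_{-1,t}},0)$ sits inside $\clL_{-1}(t)$ (this is the Lie algebra of the common stabilizer, contributing $m=\dim H^{0}_{A_{-1,t}}$ dimensions), and then, using that $\pi_{1}(H_{1})$ is free of rank $h$, it builds an $h\cdot\dim G$--dimensional family of variations of the flat connection on $H_{1}\cup[-2,-1]\times\Sigma$ via a holonomy map to $G^{h}$, projects to Coulomb gauge, and restricts to the boundary to produce at least $(h-1)\dim G+m$ dimensions in the $\Omega^{1}$--slot of $\clL_{-1}(t)$. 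Adding the two gives $(h-1)\dim G+2m=\tfrac12\dim\clH_{A_{-1,t}}$. This is really a bare-hands incarnation of the ``half lives, half dies'' count, tailored to the fact that $W_{i}$ is a handlebody; it avoids any appeal to the fine structure of Calder\'on projectors. Your Poincar\'e--Lefschetz alternative and the paper's argument are thus the same idea at different levels of abstraction: yours is cleaner and works for any $W_{i}$ with flat $B_{i,t}$, while the paper's is self-contained and makes the handlebody structure do all the work.
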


\begin{proof}
By definition, $\clL_i(t)$ is isotropic. So we just need to prove that $\dim \clL_i(t) \geq \frac12 \dim \clH_{A_{i,t}}$. Notice that $(0,\ker d_{A_{i,t}}, 0)\subset \clL_i$ because this comes from the Lie algebra of the stabilizer group and $\Stab(B_{i,t}) = \Stab(A_{i,t})$.

Let $m = \dim \ker d_{B_{-1,t}}$. Recall that $h$ is the genus of the Riemann surface $\Sigma$. Fix a base point $p_0$ on $\Sigma \cong \partial H_1$, let $\gamma_1,\cdots,\gamma_h$ be $h$ closed curved based on $p_0$ that generates the fundamental group of $H_1$. Let $\hat \clC$ be the space of smooth connections $B$ on $H_1\cup [-2,-1]\times \Sigma$ such that $B(\partial_s) = 0$ on the boundary. Define
$$
\hol: \hat\clC \to G^h
$$
to be the map given by holonomies along $\gamma_i$. 
Let $U = (-\epsilon,\epsilon)^ {h}$, let $\Phi: U\to \hat \clC$ be a map such that $\Phi(0)$ equals $B_{-1,t}$, and that $\ima d(\hol\circ \Phi)$ is surjective at zero. By construction, $\ima d\Phi(0)\subset \Omega^1(H_1\cup[-2,-1]\times \Sigma) \otimes \mathfrak{g}$ is a  $h \cdot \dim G$ dimensional linear space. For every $u\in \ima d\Phi(0)$, there is a unique $v$ such that $v\in \ima d_{B_{-1,t}}$, $*v = 0$ on $\{-1\} \times \Sigma$, and $u':=u- d_{B_{-1,t}} v \in \ker(d_{B_{-1,t}}^*)$. The kernel of $\ima d\Phi(0)$ under the map $u\mapsto u'$ has dimension at most $m$ by considering the holonomies, therefore the image of $\ima d\Phi(0)$ under the map $u\mapsto u'$ has dimension at least $(h-1)\cdot \dim G+m$. Therefore 
$$
\clL_{-1}(t) \cap (\ker (d_{B_{-1,t}} +d^*_{B_{-1,t}}), 0, 0) \subset \clH_{A_{-1,t}}
$$
has dimension at least $(h-1)\cdot \dim G+m$, which proved that $\clL_{-1}(t)$ is Lagrangian. The same argument works to prove $\clL_{i}(t)$ is Lagrangian.
\end{proof}

By definition, $\dim \clH_{A_{s,t}}$ remains to be the same for all $(s,t) \in [-1,1] \times [0,1]$. For a given $t \in [0,1]$, consider the spaces
$$L_k^2([-1,1];\clH_{A_{s,t}}) \subset L^2_k([-1,1]; \Omega^1(\Sigma) \otimes \mathfrak{g} \oplus  \Omega^0(\Sigma)  \otimes \mathfrak{g} \oplus  \Omega^2(\Sigma)  \otimes \mathfrak{g})$$
$$L_k^2([-1,1];\clH_{A_{s,t}};\clL_{-1}(t), \clL_{1}(t)) \subset L_k^2([-1,1] \times \Sigma; \Lambda_{-1}(t), \Lambda_{1}(t))$$
consisting of elements $(a, \phi, \psi)$ whose value at each $s \in [-1,1]$ lies in $\clH_{A_{s,t}}$. Then we can define a self-adjoint operator 

\begin{equation}
\label{eqn_adia_operator}
D_0(t): L_1^2([-1,1];\clH_{A_{s,t}};\clL_{-1}(t), \clL_{1}(t)) \rightarrow L^2([-1,1];\clH_{A_{s,t}})
\end{equation}
given by
$$
 D_0(t)
  \begin{pmatrix}
  a  \\
  \phi \\
  \psi
 \end{pmatrix}
 = 
 \pi_{A_{s,t}} 
 \begin{pmatrix}
* \dot a  - *dX_s(A_{s,t})(a) \\
- * \dot \psi\\
 * \dot \phi 
 \end{pmatrix}.
 $$
By construction, the family $\{D_0(t)\}_{0 \leq t \leq 1}$ has a well-defined $H$--equivariant spectral flow $Sf_{H}(D_0(t))$. The following is another comparison result of spectral flows.

\begin{Theorem}
\label{thm_spec_comp_finite}
Let $\{ D(t) \}_{0 \leq t \leq 1}$ be the family constructed in Section \ref{subsec_spec_comp} and let $\{D_0(t)\}_{0 \leq t \leq 1}$ be the family constructed above. Then
$$Sf_{H}(D(t)) = Sf_{H}(D_0(t)).$$
\end{Theorem}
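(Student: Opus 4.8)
The plan is to prove this by an adiabatic limit argument in the spirit of Dostoglou--Salamon \cite{dostoglou1994cauchy} (see also \cite{nicolaescu1995maslov}), carried out on the neck $[-1,1]\times\Sigma$. First I would reduce to a non-equivariant comparison exactly as in Lemma \ref{lem_nonequiv_flow}: decomposing $\mathfrak{g}$ into $H$--isotypic pieces $V_1^{\oplus a_1}\oplus\cdots\oplus V_m^{\oplus a_m}$ under the adjoint action, both families split as $D(t)=\bigoplus_i D^i(t)$ and $D_0(t)=\bigoplus_i D_0^i(t)$ into $\Stab(A_{s,t})$--equivariant operators, and by Definition \ref{def_equi_spectral_flow} it suffices to show $Sf(D^i(t))=Sf(D_0^i(t))$ for each fixed $i$. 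From now on I drop the index $i$.

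Next I would introduce the adiabatic family. For $\varepsilon\in(0,1]$, rescale the $\Sigma$--directions on $[-1,1]\times\Sigma$, and on the attached pieces $H_1\cup[-2,-1]\times\Sigma$ and $[1,2]\times\Sigma\cup H_2$, by a factor $\varepsilon$; this replaces $S_{A_{s,t}}$ by $\varepsilon^{-1}S_{A_{s,t}}$ and the Lagrangian boundary data $\Lambda_{\pm1}(t)$ by the Cauchy--data Lagrangians $\Lambda_{\pm1}^{\varepsilon}(t)$ of the correspondingly rescaled operators on the handlebodies, yielding a two--parameter family of self--adjoint Fredholm operators
$$D^{\varepsilon}(t)\;=\;\tilde{J}_{\Sigma}\Bigl(\tfrac{\partial}{\partial s}+\varepsilon^{-1}S_{A_{s,t}}-dX_s(A_{s,t})\Bigr),\qquad D^{1}(t)=D(t),$$
all of them $\Stab(A_{s,t})$--equivariant. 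The perturbation term $dX_s(A_{s,t})$ is a bounded zeroth order operator, supported on $[-1,1]\times\Sigma$ and vanishing near $s=\pm1$ (because $h_s$ does), so each $D^{\varepsilon}(t)$ is in product form near the cuts $s=\pm1$.

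The analytic core is the adiabatic estimate. Split the coefficient space $W=\Omega^1(\Sigma)\otimes\mathfrak{g}\oplus\Omega^0(\Sigma)\otimes\mathfrak{g}\oplus\Omega^2(\Sigma)\otimes\mathfrak{g}$ fibrewise over $s$ as $\clH_{A_{s,t}}\oplus\clH_{A_{s,t}}^{\perp}$. Since $\{A_{s,t}\}$ ranges over a compact family with $\dim\clH_{A_{s,t}}$ constant, $S_{A_{s,t}}$ is invertible on $\clH_{A_{s,t}}^{\perp}$ with a uniform spectral gap, so $\varepsilon^{-1}S_{A_{s,t}}$ forces the ``large'' eigenvalues of $D^{\varepsilon}(t)$ to have absolute value $\ge c\varepsilon^{-1}$, while on $\clH_{A_{s,t}}$ the term $\varepsilon^{-1}S_{A_{s,t}}$ vanishes and $D^{\varepsilon}(t)$ restricts, up to $O(\varepsilon)$ coupling, to exactly $D_0(t)$ with the boundary conditions $\clL_{\pm1}(t)=\clH_{A_{\pm1,t}}\cap\Lambda_{\pm1}(t)$. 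Carrying out the eigenvalue and resolvent estimates of \cite{dostoglou1994cauchy} in this setting shows that for $\varepsilon\le\varepsilon_0$ sufficiently small the spectrum of $D^{\varepsilon}(t)$ in a fixed neighbourhood of $0$ agrees with that of $D_0(t)$ with multiplicities, uniformly in $t$; hence $Sf(D^{\varepsilon}(t))=Sf(D_0(t))$ for $\varepsilon\le\varepsilon_0$. On the other hand, by homotopy invariance of the spectral flow applied to the two--parameter family $\{D^{\varepsilon}(t)\}$ over $[0,1]_t\times[\varepsilon_0,1]_{\varepsilon}$ --- using that all the operators are in product form near $s=\pm1$, as in \cite{nicolaescu1995maslov} --- one checks that $Sf(D^{\varepsilon}(t))$ does not depend on $\varepsilon\in(0,1]$. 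Combining the two facts, $Sf(D(t))=Sf(D^{1}(t))=Sf(D^{\varepsilon_0}(t))=Sf(D_0(t))$, and reassembling the isotypic pieces gives $Sf_{H}(D(t))=Sf_{H}(D_0(t))$.

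I expect the main obstacle to be the adiabatic estimate itself: establishing the uniform spectral gap on $\clH_{A_{s,t}}^{\perp}$ and the quantitative convergence of the small--eigenvalue part of $D^{\varepsilon}(t)$ to $D_0(t)$ as $\varepsilon\to0$, uniformly in $t$ and compatibly with the $\varepsilon$--dependent Lagrangian boundary conditions $\Lambda_{\pm1}^{\varepsilon}(t)$. This is precisely the content of the Dostoglou--Salamon analysis; the features specific to our situation --- the holonomy/Hamiltonian perturbation $dX_s$ (lower order and supported away from the cuts) and the group equivariance (automatic, since all operators and splittings are $\Stab$--invariant and one argues isotypic piece by isotypic piece) --- do not change the substance of those estimates.
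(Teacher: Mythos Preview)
Your approach is essentially the same as the paper's: both reduce to isotypic pieces and then invoke the Dostoglou--Salamon adiabatic limit \cite{dostoglou1994cauchy} on the neck $[-1,1]\times\Sigma$, comparing the small eigenvalues of a rescaled family $D^{\varepsilon}(t)$ to those of the finite-dimensional operator $D_0(t)$, and using homotopy invariance in $\varepsilon$.

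There is one substantive difference in implementation worth flagging. The paper deforms only the operator on the neck, putting the $\epsilon^{-2}$ weight on the $d_{A_{s,t}}^*a$ and $d_{A_{s,t}}a$ components (exactly as in \cite{dostoglou1994cauchy}) while keeping the boundary Lagrangians $\Lambda_{\pm1}(t)$ \emph{fixed} for all $\epsilon$. You instead rescale the metric on the handlebodies as well, which forces you to work with $\varepsilon$--dependent Cauchy--data Lagrangians $\Lambda_{\pm1}^{\varepsilon}(t)$. This is an unnecessary complication: to make your argument complete you would need to verify that $\clH_{A_{\pm1,t}}\cap\Lambda_{\pm1}^{\varepsilon}(t)=\clL_{\pm1}(t)$ is independent of $\varepsilon$ (plausible, since the finite part of the Cauchy data is cohomological), and that the homotopy invariance of spectral flow over the rectangle $[0,1]_t\times[\varepsilon_0,1]_{\varepsilon}$ goes through with both the operators and the domains varying. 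The paper sidesteps both issues by never touching the boundary conditions; the $\epsilon$--dependent norms $\|\cdot\|_{0,\epsilon}$, $\|\cdot\|_{1,\epsilon}$ it introduces serve only as tools in the estimates (Steps~1--4), not as a change of Hilbert space structure. Your version should ultimately work, but the paper's is cleaner and matches \cite{dostoglou1994cauchy} verbatim.
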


The proof of Theorem \ref{thm_spec_comp_finite} is exactly the same as the proof of \cite[Theorem 6.1]{dostoglou1994cauchy}. We will sketch a proof for completeness. Because we have used connections in temporal gauge, the covariant differentiation $\nabla_s$ in \cite{dostoglou1994cauchy} is replaced by the ordinary differentiation $\partial_s$. Note that the calculation of equivariant spectral flow could be reduced to the calculation of ordinary spectral flow as in Section \ref{subsec_spec_comp} so the $H$--equivariance will be suppressed in this subsection.

The idea is to use the $1$--parameter family of operators 
$$D_{\epsilon}(t): L_1^2([-1,1] \times \Sigma; \Lambda_{-1}(t), \Lambda_{1}(t)) \rightarrow L^2([-1,1]; \Omega^1(\Sigma) \otimes \mathfrak{g} \oplus  \Omega^0(\Sigma)  \otimes \mathfrak{g} \oplus  \Omega^2(\Sigma)  \otimes \mathfrak{g})$$
given by the formula
 $$
 D_{\epsilon}(t)
 \begin{pmatrix}
  a  \\
  \phi \\
  \psi
 \end{pmatrix}
 = 
 \begin{pmatrix}
* \dot a + d_{A_{s,t}}\phi + d_{A_{s,t}}^*\psi - *dX_s(A_{s,t})(a)\\
- * \dot \psi + \frac{1}{\epsilon^2} d^*_{A_{s,t}} a\\
* \dot \phi + \frac{1}{\epsilon^2} d_{A_{s,t}} a
 \end{pmatrix}
 $$
depending on the parameter $\epsilon > 0$, so that the spectral flow of $\{ D(t) \}_{0 \leq t \leq 1} = \{ D_{1}(t) \}_{0 \leq t \leq 1}$ could be reduced to the spectral flow of $\{D_0(t)\}_{0 \leq t \leq 1}$ by passing to the adiabatic limit $\epsilon \rightarrow 0$.

Fix $t \in [0,1]$. Introduce norms 
$$\|(a, \phi, \psi) \|_{0, \epsilon}^2 = \| a \|^2 + \epsilon^2 \| \phi \|^2 + \epsilon^2 \| \psi \|^2$$
on the space  $L^2([-1,1]; \Omega^1(\Sigma) \otimes \mathfrak{g} \oplus  \Omega^0(\Sigma)  \otimes \mathfrak{g} \oplus  \Omega^2(\Sigma)  \otimes \mathfrak{g})$ and 
\begin{align*}
\|(a, \phi, \psi) \|_{1, \epsilon}^2 &= \|a\|^2 + \|d_{A_{s,t}}a\|^2 + \|d_{A_{s,t}} * a\|^2 + \epsilon^2 \| \dot a \|^2 \\ 
& \quad + \epsilon^2\|d_{A_{s,t}} \phi \|^2 + \epsilon^4\| \dot \phi \|^2 + \epsilon^2\|d_{A_{s,t}}^* \psi \|^2 + \epsilon^4\| \dot \psi \|^2
\end{align*}
on $L^2_1([-1,1]; \Omega^1(\Sigma) \otimes \mathfrak{g} \oplus  \Omega^0(\Sigma)  \otimes \mathfrak{g} \oplus  \Omega^2(\Sigma)  \otimes \mathfrak{g})$ using the product metric on $[-1,1] \times \Sigma$.

\begin{proof}[Sketch of proof of Theorem \ref{thm_spec_comp_finite}] The proof breaks into five steps.

\emph{Step 1-Elliptic estimate:}There is an $\epsilon_0 > 0$ such that for any $0 < \epsilon < \epsilon_0$, there exists a constant $C > 0$ independent of $\epsilon$ such that
\begin{equation}
\label{eqn_ell_est}
\| u - \pi_{A_{s,t}}(u) \|_{1, \epsilon} \leq C \epsilon (\| D_{\epsilon}(t) u \|_{0, \epsilon} + \|\pi_{A_{s,t}}(u) \|_{L^2})
\end{equation}
for $u \in L_1^2([-1,1] \times \Sigma; \Lambda_{-1}(t), \Lambda_{1}(t))$. 

This is \cite[Lemma 7.3]{dostoglou1994cauchy}. Note that although our $3$--manifold $[-1,1] \times \Sigma$ has boundary, the Lagrangian boundary condition guarantees that the integration by parts formula entering the proof works without change. Meanwhile, our projection map $\pi_{A_{s,t}}$ involves the $H^0$ and $H^2$ part so the reducible connections would not affect the estimate.

\emph{Step 2-Convergence of resolvent set:} For every $C_0 > 0$ there exists constants $\epsilon_0 > 0$ and $C > 0$ such that the following holds for all $t \in [0,1]$ and $|\lambda| \leq C_0$. If
$$\| u_0 \|_{L^2} \leq C_0 \| D_{0}(t) u_0 - \lambda u_0 \|_{L^2}$$ 
for all $u_0 \in L_1^2([-1,1];\clH_{A_{s,t}};\clL_{-1}(t), \clL_{1}(t)),$
then
$$\| u \|_{0, \epsilon} \leq C \| D_{\epsilon}(t) u - \lambda u\|_{0, \epsilon}$$
for $0 < \epsilon < \epsilon_0$ and $u \in L_1^2([-1,1] \times \Sigma; \Lambda_{-1}(t), \Lambda_{1}(t))$.

This is \cite[Lemma 7.4]{dostoglou1994cauchy}. Note that the proof is based on a direct computation and an application of equation \eqref{eqn_ell_est} therefore it works in our situation without change.

\emph{Step 3-Refined convergence of operators:} Define
$$R_{0} = \{ (t, \lambda) \in [0,1] \times \bC \big| \lambda \mbox{ is not an eigenvalue of } D_0(t) \}$$
to be the resolvent set of the family $\{ D_{0}(t) \}_{0 \leq t \leq 1}$. Denote by $R_{\epsilon}$ is resolvent set of the family $\{ D_{\epsilon}(t) \}_{0 \leq t \leq 1}$. Then for every compact subset $K \subset R_0$, there exists a constant $\epsilon_0 > 0$ such that $K \subset R_{\epsilon}$ for $0 < \epsilon < \epsilon_0$ and 
\begin{equation}
\| \pi_{A_{s,t}}((\lambda \cdot \id - D_{\epsilon}(t))^{-1} v) - (\lambda \cdot \id - D_{0}(t))^{-1}\pi_{A_{s,t}} \|_{L^2} \leq C \epsilon \|v\|_{0, \epsilon}
\end{equation}
for $(t, \lambda) \in K$ and $v \in L^2([-1,1]; \Omega^1(\Sigma) \otimes \mathfrak{g} \oplus  \Omega^0(\Sigma)  \otimes \mathfrak{g} \oplus  \Omega^2(\Sigma)  \otimes \mathfrak{g})$.

This is \cite[Lemma 7.5]{dostoglou1994cauchy}.

\emph{Step 4-Multiplicity estimate:} For all $C_0 > 0$ there exists a constant $k_0 > 0$ such that the following holds for all $t \in [0,1]$ and $0 < \epsilon^2 + \delta^2 < k_0$. If $\lambda_0$ is an eigenvalue of $D_{0}(t)$ of multiplicity $m_0$ and 
$$u_0 \perp \ker(\lambda_0 \cdot \id - D_{0}(t)) \Longrightarrow \| u_0 \|_{L^2} \leq C_0 \| \lambda_0 u_0 - D_{0}(t) u_0 \|_{L^2}$$
for all $u \in L_1^2([-1,1];\clH_{A_{s,t}};\clL_{-1}(t), \clL_{1}(t))$, then the multiplicity of all eigenvalues of $\lambda$ of $D_{\epsilon}(t)$ with $|\lambda - \lambda_{0}| \leq \delta$ does not exceed $m_0$.

This follows from \cite[Lemma 7.6]{dostoglou1994cauchy}.

\emph{Step 5-Concluding the proof.} By Kato's selection theorem, we can find $\delta \in \bR$ which is arbitrarily close to $0$ such that the resonance set of the family of the operators $\{ D_0(t) - \delta \cdot \id \}_{0 \leq t \leq 1}$, written as $Z(D_0(t) - \delta \cdot \id)$ is finite and contained in $(0,1)$. Moreover, we can guarantee that the resonance matrices of this family is non-degenerate. Use $\sigma(D)$ to represent the spectrum of $D$. Let $Z(D_0(t) - \delta \cdot \id) = \{ t_1, \dots, t_N \}$. Recall that $R(D_0(t_i) - \delta \cdot \id)$ for $i = 1, \dots, N$ is the resonance matrix. Define
$$m_i := \mbox{sign}(R(D_0(t_i) - \delta \cdot \id)), i=1, \dots, N$$
to be the signatures of the resonance matrices. Then
$$Sf(\{D_0(t)\}_{0 \leq t \leq 1}) = \sum_{i=1}^{N} m_i$$
and $m_i$ could be computed as follows. Choose $\kappa > 0$ such that $\lambda = \delta$ is the only eigenvalue of $D_0(t_i)$ in the interval $[\delta - \kappa, \delta +\kappa]$. Now choose $\tau > 0$ such that $\delta \pm \kappa \notin \sigma(D_0(t))$ for all $t \in [t_i - \tau, t_i + \tau]$. Then
\begin{align*}
m_i &= \# \{ \lambda \in \sigma(D_0(t_i - \tau)) \big| \delta - \kappa < \lambda < \delta \} \\
& \quad - \# \{ \lambda \in \sigma(D_0(t_i + \tau)) \big| \delta - \kappa < \lambda < \delta \}.
\end{align*}
By \emph{Step 2}, there exists a constant $\epsilon_0 > 0$ such that $\delta \notin \sigma(D_{\epsilon}(t))$ for $t = t_i - \tau \mbox{ or } t_i + \tau)$ and $\delta \pm \kappa \notin \sigma(D_{\epsilon}(t))$ for all $t \in (t_i - \tau, t_i + \tau)$ as long as $0 < \epsilon < \epsilon_0$. Using \emph{Step 3}, \emph{Step 4} and the spectral projection operators, one can show that
\begin{align*}
m_i &= \# \{ \lambda \in \sigma(D_{\epsilon}(t_i - \tau)) \big| \delta - \kappa < \lambda < \delta \} \\
& \quad - \# \{ \lambda \in \sigma(D_{\epsilon}(t_i + \tau)) \big| \delta - \kappa < \lambda < \delta \}.
\end{align*}
Therefore we conclude that
$$Sf( D_{\epsilon}(t)) = \sum_{i=1}^{N} m_i = Sf(D_{0}(t))$$
for $\epsilon > 0$ sufficiently small. By homotopy invariance, $Sf(D_{\epsilon}(t))$ is independent of $\epsilon > 0$. This concludes the proof. For the full details of the proof, we refer the readers to \cite[Section 7]{dostoglou1994cauchy}.
\end{proof}

\subsection{From Maslov index to spectral flow and back}Throughout this subsection, the structure group $G $ is equal to $\SU(n)$ and $Y$ is an integer homology $3$--sphere. Furthermore, suppose that the genus of the Heegaard surface $\Sigma$ is $h \geq 3$. Let $H_{\hat{s}}$ and $f_{{\bf q}}$ be a compatible pair of small non-degenerate perturbations. Suppose $B \in \clC(Y)$ is a $f_{{\bf q}}$--perturbed flat connection of type $\sigma = ((n_1, m_1), \dots, (n_r, m_r)) \in \Sigma_{n}$ (see the end of Section \ref{sec_preliminary}). Without loss of generality, we can assume that $B$ is in temporal gauge over $[-2, 2] \times \Sigma$ and it is represented by a path $A_{s}:[-2,2] \rightarrow \clA^{0}_{F}(\Sigma)$ which is constant for $s \in [-2,-1] \cup [1,2]$. After applying a gauge transform, the connection $B$ is given by the direct sum of irreducible $\SU(n_i)$-connections $B^{(i)}$ on $E(n_i)$ where 
\begin{equation}
\label{eqn_decomp_last}
P \times_{\SU(n)} \bC^n \cong E(n_1)^{\oplus m_1} \oplus \cdots \oplus E(n_r)^{\oplus m_r}.
\end{equation}
Let $A^{(i)}_{s}$ be the induced path on the $E(n_i)$--component. Then $A^{(i)}_{-1}$ extends to a flat connection $B^{(i)}_{-1}$ on $H_1 \cup [-2,-1] \times \Sigma$ and $A^{(i)}_{1}$ extends to a flat connection $B^{(i)}_{1}$ on $[1,2] \times \Sigma \cup H_2$. Note that because $Y$ is an integer homology sphere, the product connection $\theta$ is a non-degenerate critical point of the unperturbed functional $\CS$. Because $f_{{\bf q}}$ is a small perturbation, there exists a $f_{{\bf q}}$--perturbed flat connection $\tilde{\theta}$ in temporal gauge over $[-2,2] \times \Sigma$ with stabilizer $G$ which lies in a contractible neighborhood of the product connection $\theta \in \clC(Y)$. Denote by $\tilde{\theta}^{(i)}$ the $E(n_i)$--component of $\tilde{\theta}$ under the decomposition \eqref{eqn_decomp_last} and $\tilde{\theta}_{s}$ is the induced path of flat connections in $\clA_{F}(\Sigma)$, which can be further assumed to lie in $\clA_{F}^0(\Sigma)$. Then we can find a path of connections
$$A^{(i)}_{-1,t}: [0,1] \rightarrow \clA_{F}^0(\Sigma) \mbox{ with } A^{(i)}_{-1,0} = \tilde{\theta}^{(i)}_{-1} \mbox{ and } A^{(i)}_{-1,1} = A^{(i)}_{-1}$$
such that $A^{(i)}_{-1,t}$ extends to a flat connection $B^{(i)}_{-1,t}$ on $E(n_i)$ over $H_1 \cup [-2,-1] \times \Sigma$ for all $t \in [0,1]$ which is irreducible expect for $t = 0$. Similarly, we can find the family $A^{(i)}_{1,t}$ on $[1,2] \times \Sigma \cup H_2$ and $B^{(i)}_{1,t}$ with the same properties. Then the union of paths $\{ A^{(i)}_{-1,t} \}_{0 \leq t \leq 1}, \{ \tilde{\theta}^{(i)}_{s} \}_{-1 \leq s \leq 1}, \{ A^{(i)}_{1,t} \}_{0 \leq t \leq 1}$ and $\{ A^{(i)}_{s} \}_{-1 \leq s \leq 1}$ defines a loop inside the space of flat connections on $E(n_i)$, denoted by $\clA_{F}^{(i)}(\Sigma)$. By our assumption that the genus of $\Sigma$ is at least $3$, \cite[Corollary 2.7]{daskalopoulos1995application} shows that the subset of $\clA_{F}^{(i)}(\Sigma)$ consisting of irreducible connections is simply-connected. Consequently, the loop constructed above could be extended to a smooth $2$--parameter family
$$A^{(i)}_{s,t}: [-1,1] \times [0,1] \rightarrow \clA_{F}^{(i)}(\Sigma)$$
such that $A^{(i)}_{s,t}$ is irreducible except for $(s,t) \in [-1,1] \times \{0\}$. Take the direct sum of these families for $1 \leq i \leq m$ and apply further gauge transformations if necessary, we obtain a $2$--parameter family
$$A_{s,t}: [-1,1] \times [0,1] \rightarrow \clA_{F}^{0}(\Sigma) \subset \clA_{F}(\Sigma).$$ 
Let $\tilde{\theta} = B_0$ and $B = B_1$, we can use $A_{s,t}$ to construct a family $\{ B_t \}_{0 \leq t \leq 1}$ satisfying the assumptions in the beginning of Subsection \ref{subsec_spec_comp}. According to this construction, $\Stab(A_{s,t})$ remains invariant for $(s,t) \in [-1,1] \times (0,1]$.

\begin{Lemma}
\label{lem_CSinv}
Suppose the perturbations $H_{\hat{s}}$ and $f_{\bf q}$ are $0$. Then for each $1 \leq i \leq r$, the symplectic area of $A^{(i)}_{s,t}: [-1,1] \times [0,1] \rightarrow \clA_{F}^{(i)}(\Sigma)$ under the Atiyah-Bott symplectic form $\omega_{\Sigma}$ is one-half of the Chern-Simons invariant of $B^{(i)}$.
\end{Lemma}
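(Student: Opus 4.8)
The plan is to reduce the statement to a direct application of the fundamental theorem of calculus along the path $\{B_t^{(i)}\}_{0 \le t \le 1}$ on $Y$, using that when the perturbations vanish the curvature of every $B_t^{(i)}$ is supported in the neck $[-1,1]\times\Sigma$, where the relevant integrand becomes exactly the integrand of the symplectic area of $A^{(i)}_{s,t}$. First I would record the consequences of setting $H_{\hat s}$ and $f_{\bf q}$ to zero: then $\tilde\theta = \theta$ is the product connection (so $B_0 = \tilde\theta$ has $B_0^{(i)} = \theta_i$ and $\CS(B_0^{(i)})=0$), $B = B_1$ is an honest flat connection, and by construction $B_t^{(i)}$ restricts, for \emph{every} $t$, to the flat connection $B^{(i)}_{-1,t}$ on $N_1 := H_1 \cup [-2,-1]\times\Sigma$ and to the flat connection $B^{(i)}_{1,t}$ on $N_2 := [1,2]\times\Sigma \cup H_2$, while on $[-1,1]\times\Sigma$, in temporal gauge, it is the family $A^{(i)}_{s,t}$. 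Using $(\grad\CS)(B) = *F_B$ and $\CS(B_0^{(i)}) = 0$, one has
$$\CS(B^{(i)}) \;=\; \int_0^1 \frac{d}{dt}\CS(B_t^{(i)})\,dt \;=\; \int_0^1\!\!\int_Y \Tr\big(\dot B_t^{(i)} \wedge F_{B_t^{(i)}}\big)\,dt .$$

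Next I would localize this integral. Since $B_t^{(i)}$ is flat on $N_1$ and on $N_2$, its curvature is supported in $[-1,1]\times\Sigma$, where, in temporal gauge, $F_{B_t^{(i)}} = ds \wedge \partial_s A^{(i)}_{s,t}$ (the purely $\Sigma$-directional curvature vanishes because $A^{(i)}_{s,t} \in \clA_F^{(i)}(\Sigma)$) and $\dot B_t^{(i)} = \partial_t A^{(i)}_{s,t}$. Therefore
$$\int_Y \Tr\big(\dot B_t^{(i)} \wedge F_{B_t^{(i)}}\big) \;=\; \pm\int_{-1}^{1}\!\!\int_{\Sigma} \Tr\big(\partial_s A^{(i)}_{s,t} \wedge \partial_t A^{(i)}_{s,t}\big)\,ds ,$$
the sign being fixed once and for all by the orientation of $[-1,1]\times\Sigma \subset Y$. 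Integrating over $t \in [0,1]$, the right-hand side is $\pm\int_{[-1,1]\times[0,1]} (A^{(i)})^{*}\omega_\Sigma$, which is precisely (up to that sign) the symplectic area of $A^{(i)}_{s,t}$ with respect to the Atiyah--Bott form $\omega_\Sigma$. Equivalently, one can phrase the whole computation on the cobordism $[0,1]\times Y$: decomposing it as $\big([-1,1]_s\times[0,1]_t\big)\times\Sigma \;\cup\; [0,1]\times N_1 \;\cup\; [0,1]\times N_2$ and putting on it the temporal-gauge connection $\mathbb B^{(i)}$ with $\mathbb B^{(i)}|_{\{j\}\times Y} = B_j^{(i)}$, the quantity $\int_{[0,1]\times Y}\Tr(F_{\mathbb B^{(i)}}\wedge F_{\mathbb B^{(i)}})$ equals a universal constant times $\CS(B_1^{(i)}) - \CS(B_0^{(i)})$; the two handlebody pieces contribute nothing because $\mathbb B^{(i)}$ is flat there, so the entire contribution comes from the neck piece, where $F_{\mathbb B^{(i)}}\wedge F_{\mathbb B^{(i)}}$ is proportional to $ds\wedge dt\wedge(\partial_s A^{(i)}_{s,t}\wedge\partial_t A^{(i)}_{s,t})$.

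The only genuinely delicate point is pinning down the overall numerical constant $\tfrac12$ (and the sign), i.e. reconciling the normalization $\CS = \tfrac12\int_Y \Tr(\cdot)$ of \eqref{eqn_def_chern_simons} with $\omega_\Sigma = \int_\Sigma \Tr(\cdot\wedge\cdot)$ together with a coherent choice of orientations on $\Sigma$, on $Y$, and on $[0,1]\times Y$. This is a one-time convention check — most transparently verified against the classical relation between the Chern--Simons invariant of a flat connection on $Y$ and the area of a bounding family of flat connections over $\Sigma$ — and it produces exactly the factor of one half in the statement. I expect this constant-tracking to be the main source of potential error, though it is not conceptually hard; everything else is immediate from flatness of $B_t^{(i)}$ away from the neck and the temporal-gauge form of its curvature on the neck.
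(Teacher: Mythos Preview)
Your proposal is correct and essentially coincides with the paper's proof. The paper takes exactly the cobordism route you describe as an equivalent reformulation: it splits $\CS(B^{(i)})$ over $N_1$, $[-1,1]\times\Sigma$, and $N_2$, uses Stokes' formula on $[0,1]\times Y$ to rewrite it as $\int_{[0,1]\times[-1,1]\times\Sigma}\Tr(F_{B_t}\wedge F_{B_t})$ (the $N_j$ pieces vanish because $B^{(i)}_{\pm1,t}$ is flat), and then expands $F$ in temporal gauge to obtain $2\int\Tr(\partial_s A^{(i)}_{s,t}\wedge\partial_t A^{(i)}_{s,t})\,ds\,dt$. Your ``fundamental theorem of calculus along $t$'' version is just the differentiated form of that same Stokes computation, so the two arguments are the same, and your constant-tracking concern is indeed the only thing to get right.
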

\begin{proof}
According to our convention of Chern-Simons functional \eqref{eqn_def_chern_simons},
$$
\begin{aligned}
\CS(B^{(i)}) &= \CS(B^{(i)}|_{H_1 \cup [-2,-1] \times \Sigma}) + \CS(B^{(i)}|_{[-1,1] \times \Sigma}) + \CS(B^{(i)}|_{[1,2] \times \Sigma \cup H_2}) \\
&= \int_{[0,1] \times [-1,1] \times \Sigma} \Tr(F_{B_t} \wedge F_{B_t}) \\
&= \int_{[0,1] \times [-1,1] \times \Sigma} \Tr((ds \wedge \partial_{s}A^{(i)}_{s,t} + dt \wedge \partial_{t}A^{(i)}_{s,t} + F_{A^{(i)}_{s,t}})^2 ) \\
&= 2 \int_{[0,1] \times [-1,1] \times \Sigma} \Tr(\partial_{s}A^{(i)}_{s,t} \wedge \partial_{t}A^{(i)}_{s,t}) dsdt.
\end{aligned}
$$
The second equality is an application of the Stokes' formula over the $4$--manifolds $[0,1] \times [-1,1] \times \Sigma, [0,1] \times (H_1 \cup [-2,-1] \times \Sigma), [0,1] \times ([1,2] \times \Sigma \cup H_2)$ and notice that $B^{(i)}|_{H_1 \cup [-2,-1] \times \Sigma}, B^{(i)}|_{[1,2] \times \Sigma \cup H_2}$ are connected to the product connection through flat connections. The above calculation proves the lemma.
\end{proof}

Recall that $L_1$ and $L_2$ are Lagrangians in the extended moduli space $\hat{\mathcal{M}}^{\mathfrak{g}}(\Sigma')$ constructed from handlebodies. The $f_{\bf q}$--perturbed flat connection $B$ is associated with the $G$--orbit $[A_1] \in \Phi_{H_{\hat{s}}}(L_1) \cap L_2$ in $\hat{\mathcal{M}}^{\mathfrak{g}}(\Sigma')$ in Proposition \ref{prop_set}. Let $\Phi_{H_{\hat{s}}}(\eta)$ be the time-$\eta$--flow of the Hamiltonian vector field $X_{H_{\hat{s}}}$. Then the composition
$$u(A_{s,t}) := \Phi_{H_{\hat{s}}}(1 - \eta)([A_{2 \eta - 1, t}]): (\eta, t) \in [0,1] \times [0,1] \rightarrow \hat{\mathcal{M}}^{\mathfrak{g}}(\Sigma')$$
maps $\{0\} \times [0,1]$ to $\Phi_{H_{\hat{s}}}(L_1)$ and maps $\{1\} \times [0,1]$ to $L_2$ and the stabilizer of every point in the image contains $H = \Stab(B) \cong \Stab([A_1])$ as a subgroup. Using Definition \ref{def_equi_mas}, $u(A_{s,t})$ has a well-defined $H$--equivariant Maslov index. Here is the precise statement of Theorem \ref{thm_index_intro}.

\begin{Theorem}
\label{thm_main}
$Sf_{H}(B, f_{\bf q}) - [\ker d_B] = \mu^{H}(u(A_{s,t}))$.
\end{Theorem}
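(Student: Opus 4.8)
The plan is to thread the identity through the comparison results already proved in this section, adding one genuinely new step at the end. Fix the $2$-parameter family $A_{s,t}$ and the associated family of connections $\{B_t\}_{0\le t\le 1}$ on $Y$ with $B_0=\tilde\theta$, $B_1=B$ constructed just above, recalling that the disk $u(A_{s,t})$ exists because $\Sigma$ has genus at least $3$ (via \cite[Corollary 2.7]{daskalopoulos1995application}). First I would relate $Sf_{H}(B,f_{\bf q})$ to $Sf_{H}(\{K_{B_t,f_{\bf q}}\})$: by homotopy invariance of the $H$--equivariant spectral flow, and the observation following the definition of $Sf_{H}(B,\pi)$ that it may be computed along any $H$--equivariant path from $K_{B,f_{\bf q}}$ to $K_{\theta,0}$, it suffices to concatenate $\{K_{B_t,f_{\bf q}}\}$ with a short path from $K_{\tilde\theta,f_{\bf q}}$ to $K_{\theta,0}$; since $\tilde\theta$ is a non-degenerate $f_{\bf q}$--flat connection with stabilizer $G$ lying in a contractible $G$--invariant neighborhood of $\theta$, and $\theta$ itself is non-degenerate because $Y$ is an integer homology sphere, this short path has no eigenvalue crossing zero away from the persistent kernel $\mathfrak{g}=\ker d_{\theta}=\ker d_{\tilde\theta}$ and so contributes nothing. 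Then Theorem \ref{thm_compare} gives $Sf_{H}(\{K_{B_t,f_{\bf q}}\})=Sf_{H}(\{D(t)\})$, reducing everything to the neck $[-1,1]\times\Sigma$, and Theorem \ref{thm_spec_comp_finite} gives $Sf_{H}(\{D(t)\})=Sf_{H}(\{D_0(t)\})$, passing to the finite-dimensional operator on twisted harmonic forms over $\Sigma$ with Lagrangian boundary conditions $\clL_{\pm1}(t)$; because $\Stab(A_{s,t})$ is constant equal to $H$ only on $(0,1]$, I would apply Theorem \ref{thm_spec_comp_finite} on $[\epsilon_0,1]$ and treat the interval $[0,\epsilon_0]$, where $B_t$ stays near $\tilde\theta$, directly. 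The outcome is $Sf_{H}(B,f_{\bf q})=Sf_{H}(\{D_0(t)\})$ up to the orientation bookkeeping above.

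The core of the argument is then to identify $Sf_{H}(\{D_0(t)\})$ with $\mu^{H}(u(A_{s,t}))+[\ker d_B]$. The operator $D_0(t)$, which up to conjugation by $\tilde{J}_{\Sigma}$ is the projection to harmonic forms of the first-order symmetric operator $\partial_s - dX_s(A_{s,t})(\cdot)$ on paths in the symplectic vector space $\clH_{A_{s,t}}$ over $[-1,1]$ with Lagrangian boundary data $\clL_{\pm1}(t)$, has kernel exactly the Jacobi fields of the linearized Hamiltonian chord equation $\dot a=dX_s(A_{s,t})(a)$ on the $H^1$--summand together with the constant solutions on the $H^0\oplus H^2$--summand. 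By the identification of the spectral flow of such operators with a Maslov index---the principle used in Proposition \ref{prop_maslov_spectral}, compare \cite[Section 7]{cappell1994maslov}---$Sf_{H}(\{D_0(t)\})$ is the $H$--equivariant Maslov index of the path of pairs of Lagrangians obtained by transporting $\clL_{-1}(t)$ along the linearized flow and pairing with $\clL_{1}(t)$, i.e.\ the Maslov index of the disk lying over $u(A_{s,t})$ in the bundle of twisted harmonic forms. Now apply Corollary \ref{cor_slice}: along the disk one has $T_{[[A_s]]}\hat{\mathcal{M}}^{\mathfrak{g}}(\Sigma')\cong H^1_{A_s}\oplus T^*(\mathfrak{g}/H^0_{A_s})$ while $\clH_{A_s}\cong H^1_{A_s}\oplus T^*(H^0_{A_s})$, and the boundary Lagrangians---which lift $T(\Phi_{H_{\hat s}}(L_1))$, $TL_2$ and $\clL_{\pm1}(t)$ respectively---agree on the common $H^1$--factor. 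Since $H^0_{A_s}=\mathfrak{h}$ along the interior of the disk, the reduced space $\overline{T_{p}M}$ of Definition \ref{def_equi_mas} (obtained by quotienting out the orbit directions $T^*(\mathfrak{g}/\mathfrak{h})$) is exactly the $H^1$--summand, so the $H^1$--part of that Maslov index is precisely $\mu^{H}(u(A_{s,t}))$. The leftover contribution of the $T^*(H^0_{A_s})=T^*\mathfrak{h}$ factor of $\clH_{A_s}$, together with the endpoint-kernel bookkeeping under the $\epsilon$--convention---$K_{B,f_{\bf q}}$ has kernel $\ker d_B\cong\mathfrak{h}$ coming from the reducible directions, which the Maslov side on $\overline{T_pM}$ does not see, whereas at the $\theta$--endpoint the full stabilizer $G$ and the homology-sphere hypothesis kill the analogous term---accounts for the shift by $[\ker d_B]$. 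Combining the chain gives $Sf_{H}(B,f_{\bf q})-[\ker d_B]=\mu^{H}(u(A_{s,t}))$.

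I expect the last step to be the main obstacle: matching the boundary conditions $\clL_{\pm1}(t)$, built from restrictions over the handlebodies of kernels of the three-dimensional operators $K_{B_t,f_{\bf q}}$, with the tangent spaces of $\Phi_{H_{\hat s}}(L_1)$ and $L_2$ through the adiabatic limit under the isomorphism of Corollary \ref{cor_slice}, and extracting exactly the correction $[\ker d_B]$ rather than some other combination of $\mathfrak{h}$ and $\mathfrak{g}$. This forces one to track the reducible and orbit directions carefully, to pin down the asymmetric role of the two endpoints in the $\epsilon$--convention for spectral flow, and to handle the single jump of the stabilizer at $t=0$; the homology-sphere hypothesis (so that $\theta$ is isolated and non-degenerate) and the genus $\ge 3$ hypothesis (so that the irreducible locus is simply connected and $u(A_{s,t})$ exists) enter precisely here.
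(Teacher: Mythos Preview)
Your proposal is correct and follows essentially the same route as the paper's proof: chain through homotopy invariance (reducing $Sf_H(B,f_{\bf q})$ to the path $\{K_{B_t,f_{\bf q}}\}$ plus a trivial stub to $K_{\theta,0}$), then Theorem \ref{thm_compare}, then Theorem \ref{thm_spec_comp_finite}, and finally identify $Sf_H(\{D_0(t)\})$ with the equivariant Maslov index via Proposition \ref{prop_maslov_spectral} and the splitting of Corollary \ref{cor_slice}, with the $H^0\oplus H^2\cong T^*\mathfrak h$ summand producing the shift by $[\ker d_B]$. The paper runs the chain in the opposite direction (starting from the Maslov side) and is terser about the $[\ker d_B]$ bookkeeping and the treatment of the interval near $t=0$, but the ingredients and the logic are the same; your explicit decomposition $\clH_{A_s}\cong H^1_{A_s}\oplus T^*(H^0_{A_s})$ versus $T_{[[A_s]]}\hat{\mathcal M}^{\mathfrak g}(\Sigma')\cong H^1_{A_s}\oplus T^*(\mathfrak g/H^0_{A_s})$ is exactly how the paper isolates the $H^1$--piece as $\overline{T_pM}$ and absorbs the rest into the correction term.
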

\begin{proof}
By definition, the equivariant Maslov index $\mu^{H}(u(A_{s,t}))$ could be computed using the equivariant spectral flow of the operator $- J \frac{\partial}{\partial s}$ on the space of $L_1^2$--sections of the symplectic vector bundle
$$u(A_{s,t})^{*} (T \hat{\mathcal{M}}^{\mathfrak{g}}(\Sigma'))$$
with Lagrangian boundary conditions induced by $T \Phi_{H_{\hat{s}}}(L_1)$ and $T L_2$. Apply the fiberwise symplectomorphism induced by linearizing $\Phi_{H_{\hat{s}}}(1 - \eta)$, use the identification \eqref{cor_slice} and use the fact that the intersection $[\tilde{\theta}]$ is non-degenerate, we can see that $\mu^{H}(u(A_{s,t}))$ is given by the $H$--equivariant spectral flow of the family
$$ - D_0(t) : L_1^2([-1,1];\clH_{A_{s,t}} \cap H^1;\clL_{-1}(t), \clL_{1}(t)) \rightarrow L^2([-1,1];\clH_{A_{s,t}} \cap H^1)$$
from \eqref{eqn_adia_operator} where $t \in [\epsilon, 1]$ such that $\epsilon > 0$ is sufficiently small. The symbol $\cap H^1$ means that we require that the $0$-form and $2$--form components are $0$. This is exactly where the shifting $[\ker d_B] $ comes in. By Theorem \ref{thm_spec_comp_finite}, we see that $\mu^{H}(u(A_{s,t}))$ is equal to the $H$--equivariant spectral flow of the family $\{ - K_{B_t, f_{\bf q}} \}_{\epsilon \leq t \leq 1}$ up to a shifting by $[\ker d_B]$. By concatenating $\{ K_{B_t, f_{\bf q}}\}_{0 \leq t \leq 1}$ with a linear path between $K_{\tilde{\theta}, f_{\bf q}}$ and $K_{\theta, 0}$, using Theorem \ref{thm_compare}, we see that $\mu^{H}(u(A_{s,t}))$ exactly computes $Sf_{H}(B, f_{\bf q})$ (recall that this is defined using the linear path from $(B, f_{\bf q})$ to $(\theta, 0)$) by homotopy invariance.
\end{proof}

Therefore, we have the following immediate corollary from Definition \ref{defn_ind}, Lemma \ref{lem_CSinv} and Theorem \ref{thm_main}:
\begin{Corollary}
$\ind(B, f_{\bf q}) = \mu^{H}(u(A_{s,t})) - \sum_{n_i \geq 2} \frac{\langle \omega, u(\hat{A}^{(i)}_{s,t}) \rangle}{2 \pi^2 n_i} \cdot \tau_i$, where $\langle \omega, u(\hat{A}^{(i)}_{s,t}) \rangle$ is the symplectic area of the family $u(\hat{A}^{(i)}_{s,t})$ induced from the $i$--th component of the decomposition of a genuine flat $\hat{B}$ near $B$. \qed
\end{Corollary}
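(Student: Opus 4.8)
The corollary is obtained by unwinding Definition~\ref{defn_ind} and inserting the two cited results. After the gauge normalizations made at the start of this subsection, $B$ already lies in $\clC(Y)^{H}$ and is presented by the path $A_s$, so one may take $g=\id$ in Definition~\ref{defn_ind}, giving
\begin{equation*}
\ind(B, f_{\bf q}) = Sf_{H}(B, f_{\bf q}) - [\ker d_B] - \CS_{\sigma}(\hat B),
\end{equation*}
where $\hat B = \hat B^{(1)}\oplus\cdots\oplus\hat B^{(r)}$ is a genuine flat connection lying in the same connected component of $\mathcal U\cap\clC(Y)^{H}$ as $B$, decomposed compatibly with \eqref{eqn_decomp_last}. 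Theorem~\ref{thm_main} rewrites the first two terms as $\mu^{H}(u(A_{s,t}))$, and by its very definition $\CS_{\sigma}(\hat B) = \sum_{n_i\ge 2}\tfrac{\CS(\hat B^{(i)})}{4\pi^2 n_i}\,\tau_i$. Thus the whole content of the corollary reduces to the identity $\CS(\hat B^{(i)}) = 2\,\langle \omega, u(\hat A^{(i)}_{s,t})\rangle$ for each $i$ with $n_i\ge 2$.

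To prove this, fix $\hat B$ close enough to $B$ and choose the associated $2$--parameter family $\hat A_{s,t}\subset\clA^0_{F}(\Sigma)$ of \emph{genuine} flat connections on $\Sigma$ — interpolating from the product connection to $\hat B$, with $\hat A_{-1,t}$ and $\hat A_{1,t}$ extending to flat connections on $H_1\cup[-2,-1]\times\Sigma$ and $[1,2]\times\Sigma\cup H_2$ respectively — taken close to the family for $B$, exactly as in the statement of Theorem~\ref{thm_BH_intro}. Since $\hat B$ is honestly flat, the hypothesis of Lemma~\ref{lem_CSinv} (``the perturbations are $0$'') applies verbatim to $\hat A^{(i)}_{s,t}$, and the lemma yields $\CS(\hat B^{(i)}) = 2\,\langle\omega_\Sigma, \hat A^{(i)}_{s,t}\rangle$, twice the Atiyah--Bott symplectic area of the family over $[-1,1]\times[0,1]$.

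It remains to match $\langle\omega_\Sigma, \hat A^{(i)}_{s,t}\rangle$ with the extended-moduli-space area $\langle\omega, u(\hat A^{(i)}_{s,t})\rangle$. Here $u(\hat A^{(i)}_{s,t})$ denotes the bigon $(\eta,t)\mapsto[\hat A^{(i)}_{2\eta-1,t}]$ in $\hat{\mathcal{M}}^{\mathfrak{g}}(\Sigma')$, with edges on $L_1$ and $L_2$ and corners at the product connection and $[\hat B^{(i)}]$ (the analogue for $\hat B$ of the disc $D(\hat p)$ of Theorem~\ref{thm_BH_intro}; because $\hat B$ is genuinely flat there is no Hamiltonian flow to apply). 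The point is that $\hat A^{(i)}_{s,t}$ takes values in $\clA^0_{F}(\Sigma)\subset\mu^{-1}(0)\subset\hat{\mathcal{M}}^{\mathfrak{g}}(\Sigma')$, and, under the tangent-space description of Section~\ref{sec_Extended}, the symplectic form $\omega$ pulls back along $\clA^0_{F}(\Sigma)$ to $\omega_{\Sigma'}(a_1,a_2)=\int_{\Sigma'}\Tr(a_1\wedge a_2)$, which agrees with $\omega_\Sigma$ on forms that vanish near $\partial\Sigma'$; hence the two areas coincide. (Should one instead run the flowed recipe that defines $u(A_{s,t})$, the resulting disc differs by the small Hamiltonian isotopy $\Phi_{H_{\hat s}}$, which preserves $\mu^{-1}(0)$, and one checks the area is unchanged by a short Stokes computation in the spirit of Lemma~\ref{lem_CSinv}, the extra Hamiltonian terms dropping out by \eqref{eqn_flat_comple} and \eqref{eqn_ham_chord}.) Substituting $\CS(\hat B^{(i)}) = 2\langle\omega, u(\hat A^{(i)}_{s,t})\rangle$ into the displayed equation gives the corollary. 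The only step requiring care is this last bookkeeping relating the three symplectic forms $\omega$, $\omega_{\Sigma'}$, $\omega_\Sigma$ on the flat locus; everything else is a formal substitution.
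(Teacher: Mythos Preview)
Your proposal is correct and follows precisely the approach the paper indicates: the corollary is stated with a \qed and the preceding sentence says it is ``an immediate corollary from Definition~\ref{defn_ind}, Lemma~\ref{lem_CSinv} and Theorem~\ref{thm_main}'', which is exactly the substitution you carry out. Your write-up is in fact more explicit than the paper's, in particular the last paragraph matching $\omega_\Sigma$ with the extended-moduli-space form $\omega$ on the flat locus $\clA^0_F(\Sigma)\subset\mu^{-1}(0)$; this bookkeeping is implicit in the paper but you spell it out correctly.
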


\begin{proof} [Proof of Theorem \ref{thm_BH_intro}]
Let $H_{\hat{s}}$ and $f_{\bf q}$ be a compatible pair. For any $G$--orbits of $\Phi_{H_{\hat{s}}}(L_1) \cap L_2$, without loss of generality we can assume that the disc $D(p)$ comes from a family $A_{s,t}(p) \subset \clA_{F}^{0}(\Sigma)$ as in this section. If $p$ corresponds to an irreducible $\SU(3)$--connection $B(p)$, by Theorem \ref{thm_main} the spectral flow from $K_{B(p), f_{\bf q}}$ is equal to $\mu(D(p))$ by noticing that the group acts trivially here and  $[\ker d_B] = 0$. If $p$ corresponds to a reducible $\SU(3)$--connection $B(p)$, it has stabilizer $\U(1)$ and $B(p)$ is gauge-equivalent to an $\SU(2)$--connection. The equivariant Maslov index $\mu^{H}(u(A_{s,t}))$ is the linear combination of the trivial representation and the weight $(-2)$--representation of $\U(1)$ in the representation ring of $\U(1)$. The coefficient before each of them, is given by $\mu_t(D(p))$ and $\mu_{n}(D(p))$ respectively. Then the theorem follows from checking the formula \eqref{thm_BH_intro} term-wisely with the formula in \cite[Theorem 1]{boden1998the}. Note that our definition of the equivariant spectral flow cancels out the factor $\frac12$ in \emph{loc.cit.} and our convention of spectral flow adds the term $[\ker d_B]$. Although the formula of Boden-Herald is written down using a holonomy perturbation which is not of the preferred form, the identification between \eqref{thm_BH_intro} and $\lambda_{\SU(3)}$ results from the independence of $\lambda_{\SU(3)}$ on holonomy perturbations, established in \cite{bai2020equivariant} and Section \ref{sec_preliminary}. Therefore the theorem is proved.
\end{proof}

\bibliographystyle{amsalpha}
\bibliography{references}

\end{document}